\DeclareMathOperator*{\colim}{\mathrm{colim}}
\newcommand{\mrm}[1]{\mathrm{#1}}
\newcommand{\mc}[1]{\mathscr{#1}}
\newcommand{\wt}[1]{\widetilde{#1}}
\newcommand{\Tor}{\mc{T}or}
\newcommand{\Et}{\mathrm{Et}}
\newcommand{\Ord}{\mc{O}rd}
\newcommand{\Hom}{\mrm{Hom}}
\newcommand{\Ext}{\mc{E}xt}
\newcommand{\F}{\mathbf{F}}
\newcommand{\id}{\mrm{id}}
\newcommand{\sHom}{\mc{H}om}
\newcommand{\mto}{\mapsto}
\newcommand{\Aff}{\mrm{Aff}}
\newcommand{\isomto}{\stackrel{\sim}{\longrightarrow}}
\newcommand{\Z}{\mathbf{Z}}
\newcommand{\Sh}{\mathrm{Sh}}
\newcommand{\Fr}{\mathrm{Fr}}
\newcommand{\alg}{\mathrm{alg}}
\newcommand{\Spf}{\mathrm{Spf}}
\newcommand{\mul}{\mathrm{mult}}
\newcommand{\et}{\mathrm{\acute{e}t}}
\newcommand{\sA}{\mathscr{A}}
\newcommand{\sO}{\mathscr{O}}
\newcommand{\Set}{\mathrm{Set}}
\newcommand{\op}{\mathrm{op}}
\DeclareMathOperator{\Spec}{\mathrm{Spec}}
\newcommand{\etsheaf}{\Sh} 
\newcommand{\bZ}{\mathbf{Z}}
\newcommand{\bF}{\mathbf{F}}
\newcommand{\bN}{\mathbf{N}}
\def\longisomap{\,{\buildrel \sim\over\longrightarrow}\,}
\newcommand{\jp}{u}
\newcommand{\trunc}{\tau}
\newcommand{\dt}{\delta}
\newcommand{\dtSh}[1]{\Sh^{\dt}_{#1}}
\newcommand{\dtoind}{\mathscr{C}}
\newcommand{\dtind}[1]{(\Aff_{#1,\mathrm{ind}})^{\dt}}
\newcommand{\displaylabelfork}[6]{{	\entrymodifiers={+!!<0pt,\fontdimen22\textfont2>}
	\def\objectstyle{\displaystyle}
\xymatrix{{#1} \ar^-{#2}[r] & {#3} \ar@<0.7ex>^-{#4}[r]\ar@<-0.7ex>_-{#5}[r] & {#6}}}}
\newcommand{\longlabelmap}[1]{{\,\buildrel #1\over\longrightarrow\,}}
\newcommand{\longmap}{{\longlabelmap{}}}
\newcommand{\ghost}{w}
\newcommand{\coghost}{\gamma}
\begin{document}
\author{James Borger}
\address{Mathematical Sciences Institute, Australian National University, Canberra, Australia}
\email{james.borger@anu.edu.au}

\author{Lance Gurney}

\title{Canonical lifts and $\dt$-structures}
\newtheorem{thmx}{Theorem}
\renewcommand{\thethmx}{\Alph{thmx}} 
\newtheorem{theo}[subsubsection]{Theorem}
\newtheorem*{theo*}{Theorem}
\newtheorem{coro}[subsubsection]{Corollary}
\newtheorem{lemm}[subsubsection]{Lemma}
\newtheorem{prop}[subsubsection]{Proposition}
\newtheorem{exmp}[subsubsection]{Example}
\theoremstyle{remark}
\newtheorem{rema}[subsubsection]{Remark}

\maketitle
\setcounter{tocdepth}{1}

\begin{abstract}
We extend the Serre--Tate theory of canonical lifts of ordinary abelian varieties to arbitrary unpolarised families of ordinary abelian varieties parameterised by a $p$-adic formal scheme $S$.
We show that the canonical lift is the unique lift to $W(S)$ which admits a $\delta$-structure
in the sense of Joyal, Buium, and Bousfield. We prove analogous statements
for families of ordinary $p$-groups and $p$-divisible groups.
\end{abstract}
\tableofcontents

\section*{Introduction}
Let $p$ be a prime number. Serre and Tate proved that any ordinary abelian variety $A$ over a perfect field $k$
of characteristic $p$ admits a distinguished lift to an abelian scheme $\wt{A}$ over $W(k)$, the ring of Witt
vectors with entries in $k$. It is in fact the unique lift, up to unique isomorphism, which admits an
endomorphism $\varphi$ reducing to the Frobenius morphism modulo $p$ and lying over the Frobenius endomorphism
of $W(k)$. The original reference for this is Messing's book~\cite{Messing72},  p.\ 177.

The purpose of this paper is to extend this to (unpolarised) ordinary abelian schemes over any $p$-adic base $S$:

\begin{thmx} Let $A$ be an ordinary abelian scheme over $S$.
	Then $A$ has a unique lift to an abelian scheme $\wt{A}$ over
	$W(S)$ admitting a $\dt_{W(S)}$-structure compatible with its group structure.
\end{thmx}

The meaning of the terms in this theorem will be explained precisely in the body of the paper. But we can say a
few words now informally. The $\dt$-structures referred to are those of Joyal, Buium, and Bousfield,
generalised to our setting. To a first approximation, a $\dt$-structure is equivalent to a lift of the Frobenius
endomorphism, but $\dt$-structures are better in that they repair certain problems that Frobenius lifts have in
the presence of $p$-torsion. (They also happen to be a $p$-typical analogue of the $\lambda$-ring structures of
algebraic K-theory and have other names that reflect this, such as $p$-typical $\lambda$-structures,
$\Lambda_p$-structures, and $\theta^p$-structures, but we will not make use of this.) A $\dt_{W(S)}$-structure 
is simply a $\dt$-structure compatible with the canonical $\dt$-structure on $W(S)$

The term \emph{$p$-adic} above means that the map $S\to\Spec(\bZ)$ factors through $\Spf(\bZ_p)\to\Spec(\bZ)$.
So if $S$ is affine, it means that $p$ is nilpotent on $S$. In general, we allow $S$ to be any sheaf of sets on
the category of affine schemes with respect to the \'etale topology. In particular, $S$ can be any $p$-adic
formal scheme. Note however that the passage from $S$ affine to $S$ arbitrary is purely formal. So the reader
can take $S$ to be affine with no real loss in generality. But as usual, allowing $S$ to be arbitrary from the 
outset allow us some flexibility at no cost.

Finally, our Witt vector construction $W(S)$ is a certain ind-object $\colim_n W_n(S)$. For example, if
$S=\Spec(R)$, then $W(S)$ is the ind-scheme $\colim_n \Spec(W_n(R))$. In that case, an abelian scheme $A$ over
$W(S)$ can be identified with a compatible family of abelian schemes $A_n$ over the truncated Witt vector rings
$W_n(R)$, as $n$ varies, and a $\dt_{W(S)}$-structure on $A$ can be expressed entirely in terms of the
compatible family of $A_n$ instead of the limiting ind-scheme $A$. In the special case where $R$ is a perfect
$\bF_p$-algebra, $W(\Spec(R))$ can be identified with the usual $p$-adic formal scheme $\Spf(W(R))$ and then
$\wt{A}$ can be viewed as a $p$-adic formal scheme over $\Spf(W(R))$. But in general $W(\Spec(R))$ and $\wt{A}$
will not be $p$-adic formal schemes. 

A consequence of the theorem above is the following one:
\begin{thmx}
The category of ordinary abelian schemes over $S$ is equivalent to the category of ordinary
abelian schemes over $W(S)$ equipped with a $\dt_{W(S)}$-structure compatible with the group structure.
\end{thmx}
This confirms the philosophy (explicitly stated in~\cite{Borger:LRFOE} and implicit in much of Buium's work
going back to~\cite{Buium:Diff-chars-over-p-adic}) that a $\dt_{W(S)}$-structure can be viewed as descent data
for the non-existent map $W(S)\to S$. 

The proof goes by proving an analogous statement, theorem~\ref{theo:ordinary-lambda}, for ordinary $p$-groups.
The corresponding statement for $p$-divisible groups (remark~\ref{rmk:ind-groups}) is a formal consequence of
this, and then we prove the theorem above by applying the theorem of Serre and Tate equating deformations
of an abelian variety with deformations of its $p$-divisible group.

The study of canonical lifts in families began with the papers of Finotti~\cite{Finotti:Mazur-Tate}, answering a
question of Mazur and Tate, and Erdo\u{g}an~\cite{Erdogan:canonical-lifts}. In our earlier
paper~\cite{Borger-Gurney:Nagoya}, we observed that in the case of elliptic curves, the canonical lift
construction extends rather formally to arbitrary families. It is worth recalling the approach we took there in
order to contrast it to the one of the present paper.

First, given a $\delta$-structure on a ring $A$, the universal property of Witt vectors discovered by Joyal
gives a bijection between maps $\Spec(R)\to \Spec(A)$ and $\delta$-equivariant maps $\Spec(W(R))\to\Spec(A)$.
Viewing $\Spec(A)$ as a moduli space $X$, one could say that a $\delta$-structure on $X$ provides a theory of
canonical lifts for the objects parametrised by $X$. (One could reasonably take this as a definition of a
theory of canonical lifts.) Second, if $X$ is flat over $\bZ$ locally at $p$, then giving a $\delta$-structure
on $X$ is equivalent to giving a lift $\varphi\:X\to X$ of the Frobenius map. Putting the two together, we see
that on a moduli space which is flat over $\bZ$ at $p$, a Frobenius lift on the moduli space formally gives
rise to a theory of canonical lifts in arbitrary families.

This generalises to non-affine moduli spaces $X$, once one extends the definitions. It even works without any
real complications when $X$ is a stack, still assumed flat at $p$, if it admits an \'etale cover by affine
schemes equipped with lifts of the Frobenius, at least given the \'etale descent theorems quoted in
section~\ref{subsec:w-etale} below. In particular, if $X$ is the moduli stack of polarised ordinary abelian
$p$-adic formal schemes, then every such abelian scheme $A$ admits a canonical subgroup $C_A\subset A$ lifting the the kernel of the Frobenius endomorphism (cf.\ Adreatta--Gasbarri \cite{andreatta_gasbarri_2007} where more general canonical subgroups are constructed without the ordinary hypothesis, generalising earlier work of Katz \cite{Katz} in the case of elliptic curves). The functor \[X\to X: A\mto A/C_A,\] which sends an ordinary polarised abelian
scheme to the quotient by the canonical subgroup is then a lift of the Frobenius, and so the flatness of $X$ implies
the existence of a theory of canonical lifts. This is the approach taken in~\cite{Borger-Gurney:Nagoya} for the
moduli of elliptic curves. So if we were only interested in canonical lifts of families of \textit{polarised}
abelian varieties, as in section~\ref{sec:polar}, we could argue as in~\cite{Borger-Gurney:Nagoya} and dispense
with much of this paper.

But this no longer holds for unpolarised families of abelian varieties because the moduli stack $X$ is not
algebraic. While a $\delta$-structure on $X$ does give a theory of canonical lifts (once $\delta$-structures on
stacks are defined), it is not clear how to upgrade a Frobenius lift on a non-algebraic stack to a
$\delta$-structure. One could attempt a descent from the moduli stack
$X^{\mathrm{pol}}$ of polarised abelian varieties to the moduli stack $X$ of unpolarised abelian varieties.
But as far as we know, the map $X^{\mathrm{pol}}\to X$ does not have any good descent-theoretic properties which
would allow for a shorter argument than the one we have given here. Indeed, it is certainly far from being anything like flat in the case of abelian schemes of dimension $g>1$: the tangent spaces of $X^{\mathrm{pol}}$ have dimension $\frac{1}{2}g(g+1)$ while those of $X$ have dimension  $g^2$.
 
So we construct the $\delta$-structure on $X$ by hand. That said, we prefer to come full circle and use the
more direct language of canonical lifts instead of the more abstract but equivalent language of
$\delta$-structures on the moduli stack. In these terms, the canonical lift $\wt{A}/W(S)$ is singled out as
simply the unique lift admitting a $\delta$-structure. Note that this property can be stated without having to
refer to the moduli space, or to any families at all besides the given one. This is in contrast to the approach
of~\cite{Borger-Gurney:Nagoya} with elliptic curves. There the uniqueness property is only proved for the
functor $A\mapsto \wt{A}$, where $A$ and $S$ must be allowed to variable.

There are essentially two technical points in the proof of our theorem. The first is an analysis of the
deformation theory of ordinary $p$-divisible groups $G/S$ along the map $S\to W(S)$, which allows us to conclude
the existence of canonical lifts of ordinary abelian schemes from the usual Serre--Tate theorem. The second
involves showing that these canonically lifted abelian schemes, which come with a lift of the Frobenius by
construction, actually have a unique $\delta_{W(S)}$-structure.

It is perhaps worth mentioning other possible approaches to the first point above and the problems that
arise when one attempts to use them. First, $p$-divisible groups are often understood in terms of associated
linear algebraic gadgets: Dieudonn\'e crystals in de Jong--Messing \cite{de1999crystalline} or Lau
\cite{lau2018divided}, or slightly different ones such as the displays and so on used by Zink
\cite{zink2001windows}, \cite{zink2002display} and Lau \cite{lau2008displays}. Indeed, such gadgets are much simpler to work with and
it is often substantially easier to construct canonical lifts of the linear algebraic incarnation of ordinary
objects. However, the relationship between $p$-divisible groups and the linear algebraic gadgets mentioned
above is never perfect. In order to obtain an equivalence of categories, and hence have anything usable, one
must place restrictions either on the base, e.g.\ $\F_p$-algebras with bounded nilpotents properties, or on the
$p$-divisible groups, e.g.\ formal groups only.

A second approach might be to consider extending the canonical coordinates on the groups of extensions of
multiplicative $p$-divisible groups by \'etale $p$-divisible groups as in \cite{Katz}. Indeed, if $S$ is a
scheme on which $p$ is nilpotent and $G_\et$ and $G_\mul$ are \'etale and multiplicative $p$-divisible groups
over $S$, then there is a canonical map \begin{equation}\Hom_S(T_p(G_\et), G_\mul)\to \mathrm{Ext}_S(G_\et,
G_\mul).\label{eqn:can-coord}\end{equation} It is easy to construct canonical lifts of $G_\et$ and $G_\mul$ to
$p$-divisible groups schemes over $W(S)$ equipped with $\delta_{W(S)}$-structures. This combined with the
functoriality of (\ref{eqn:can-coord}) shows that any extension in its image can be `canonically lifted' to a
$p$-divisible group over $W(S)$. However, (\ref{eqn:can-coord}) is often not surjective as its image consists
only of extensions which are split on $S_\mathrm{red}$. Of course, one might try to work locally and descend to
this case, but as the Witt vector functor is only compatible with \'etale covers and not more general ones
(e.g.\ flat covers) this approach never really gets off the ground.

One final possible approach is the direct, deformation theoretic method following Nori--Srinivas in the
appendix to \cite{mehta1987varieties}. However, when one works over general rings $A$, e.g.\ any ring which is
not a perfect $\bF_p$-algebra, the Witt vectors are substantially less well behaved. For instance, the
kernels of the projection maps $W_{n+1}(A)\to W_n(A)$ are no longer generated by powers of $p$, nor are they
even principal, which leaves several of the elementary observations of \cite{mehta1987varieties} without any
obvious or useful generalisations. These problems aside, the main lifting result of \cite{mehta1987varieties}
relies on projectivity assumptions which we do not make here.

In the final section of the paper, we show that the canonical lift functor is compatible with duality and polarisations. From this it follows that, in the polarised situation, the canonical lift can be algebraised:
\begin{thmx}\label{thm:delta-algebraisation}
	Let $S=\Spec(R)$, where $R$ is a ring in which $p$ is nilpotent, and let $A/S$ be an ordinary polarised abelian 
	scheme. Then $\wt{A}$ algebraises to a unique polarised abelian scheme $\wt{A}^{\mathrm{alg}}$ over 
	$\Spec(W(R))$. It is the unique lifting of $A/S$ to $\Spec(W(R))$ equipped with a
	$\dt_{\Spec(W(R))}$-structure compatible with its group structure.
\end{thmx}
This reduces to Messing's characterisation of the canonical lift cited above when $R$ is a perfect field,
because for $p$-torsion free schemes every Frobenius lift comes from a unique $\delta$-structure. As mentioned
above, the second assertion in the theorem could no doubt be proved directly without developing the general,
unpolarised theory of the rest of the paper. Also note that the affineness assumption on $S$ is not so much a
restriction but an assumption needed to make sense of the question. Without it, it would not be clear what should
play the role of the completed Witt vector construction $\Spec(W(R))$.

\section{$\dt$-structures and Witt vectors}
The primary purpose of this section is to define $\dt$-structures on abelian schemes over Witt vector
ind-schemes, or more generally on any sheaf of sets. This is done in~\ref{sec:rel-lambda}.
Before that, we recall the basic notions we will need in the theory of Witt vectors and $\dt$-rings from
\cite{Joyal:Witt},\cite{Buium:Diff-chars-over-p-adic},\cite{Bousfield:loop-spaces},\cite{Borger:BGWV-I},\cite{Borger:BGWV-II}. 

The generality in which we work, especially Witt vectors of sheaves of sets, might strike some readers as
excessive. The reason we do it is mainly to give ourselves a convenient, well-behaved category in which we can
perform all limit constructions without any worries. In principle it would be routine to replace all limit
constructions by compatible systems of schemes. But we have found that the bookkeeping needed for this is a
bigger mental burden than broadening our scope a bit and using sheaf theory. And at least
the sheaf theory keeps the paper shorter. This is discussed in more detail in the first paragraphs of \S2
of~\cite{Borger-Gurney:Nagoya}.

\subsection{$\dt$-rings} 
\label{sec:lambda-structure}
A $\dt$-ring is a pair $(R, \delta)$ where $R$ is a ring (commutative)
and $\delta\: R\to R$ is a map satisfying the following identities:

\begin{enumerate}
\item $\delta(x+y)=\delta(x)+\delta(y)-\sum_{i=1}^{p-1}\frac{1}{p}\binom{p}{i} x^i y^{p-i}$
\item $\delta(xy)=x^p\delta(y) + \delta(x)y^p + p\delta(x)\delta(y)$
\item $\delta(1)=0$
\end{enumerate}
A homomorphism of $\dt$-rings $(R, \delta)\to (R', \delta')$ is a homomorphism $\alpha\: R\to R'$ such that 
$\delta'\circ \alpha=\alpha\circ \delta$. This structure was introduced by Joyal~\cite{Joyal:Witt} in
his alternative approach to the Witt vectors, which we follow here, and
later by Buium~\cite{Buium:Diff-chars-over-p-adic} and Bousfield~\cite{Bousfield:loop-spaces}, 
independently and for different purposes.

If $(R, \delta)$ is a $\dt$-ring, then the map $\varphi\: R\to R$ defined by 
\begin{equation}
	\label{eq:phi-definition}
\varphi(x)=x^p+p\delta(x)	
\end{equation}
is a Frobenius lift, i.e., a ring homomorphism which reduces modulo $p$ to the $p$-th power map. 
If $R$ is $p$-torsion free, then the two structures determine each other:
given any Frobenius lift $\varphi'\:R \to R$, there exists a unique $\dt$-ring structure on $R$
such that $\varphi=\varphi'$; and a ring homomorphism between two $p$-torsion free $\dt$-rings 
is a morphism of $\dt$-rings if and only if it commutes with the Frobenius lifts.

But in the presence of $p$-torsion, a Frobenius lift is not a well-behaved structure---for instance, the category
of rings with Frobenius lift
does not have equalisers. The $\dt$-ring structure repairs such flaws and gives an intelligent
generalisation of the structure of a Frobenius lift to all rings. Some evidence for this is the fact that the
forgetful functor $(R, \delta)\mto R$ from $\dt$-rings to rings admits both a right adjoint 
\begin{equation}
	R\mto (W(R), \delta_{W(R)})
\end{equation}
and a left adjoint 
\begin{equation}
	\label{eq:circle-product-def}
	R\mto (J(R), \delta_{J(R)}).	
\end{equation}
In fact, more is true. This forgetful functor is comonadic, meaning that the category of $\dt$-rings agrees
with the category of coalgebras for $W$, viewed as a comonad on the category of rings. It is also monadic:
$\dt$-rings are algebras for the monad $J$.

\subsection{Witt vectors}
The ring $W(R)$ is in fact the usual ring of $p$-typical Witt vectors with entries in $R$, and the corresponding
Frobenius lift $\varphi$ is the Witt vector Frobenius, more commonly denoted $F$. For formal reasons, $W(R)$ has
the following description, which is not the traditional one. As sets, we have 
$W(R)=R\times R\times R\times\cdots$ and $\delta(x_0,x_1,\dots)=(x_1,x_2,\dots)$. The ring structure is given by
laws
\begin{align*}
	(x_0,x_1,\dots) + (y_0,y_1,\dots) &= (S_0,S_1,\dots) \\
	(x_0,x_1,\dots) \cdot (y_0,y_1,\dots) &= (P_0,P_1,\dots)
\end{align*}
where $S_n$ is the polynomial $S_n(x_0,y_0,\dots,x_n,y_n)$ with integer coefficients
giving the Leibniz rule for the operation $\delta^{\circ n}$ with respect to addition, in the sense that
$$
\delta^{\circ n}(x+y)=S_n\big(x,y,\delta(x),\delta(y),\dots,\delta^{\circ n}(x),\delta^{\circ n}(y)\big),
$$
and $P_n$ is similarly the polynomial giving the Leibniz rule for $\delta^{\circ n}$
with respect to multiplication. The existence of such
polynomials can be seen by applying the basic Leibniz rules (1)--(3) above repeatedly, and the uniqueness
can be seen by reducing to the case of $p$-torsion-free rings, where a $\dt$-structure is equivalent to a
Frobenius lift.
The additive and multiplicative neutral elements are $(0,0,0,\dots)$ and $(1,0,0,\dots)$.

There is a canonical isomorphism between $W(R)$ as defined above and Witt's original
construction~\cite{Witt:Vectors}, as recalled for example in Serre's book~\cite{Serre:LocalFields}. This
follows from the fact that the original construction satisfies
the universal property of ours. For a proof of this, one can see p.\ 215 of Lazard's 
book~\cite{Lazard:formal-groups-book}, keeping in mind
that it is enough to restrict to rings $R$ which are $p$-torsion free since both functors are represented by
rings which are $p$-torsion free---in fact by polynomial rings. 

Beware however that the canonical isomorphism between our construction and Witt's is not the identity! It is
the identity on the components $x_0$ and $x_1$ but not on $x_2$ and the higher components. One could say that
there are two different coordinate systems on the same
functor---the ones above, which we call the \emph{Buium--Joyal components}, and the traditional ones, which we
call the \emph{Witt components}. The Buium--Joyal components are directly connected to a simple universal
property, as above, and hence are usually better for conceptual purposes.
For example, the comonad structure map (or coplethysm or Artin--Hasse map)
	$$
	\Delta\:W(R)\to W(W(R))
	$$
does not have a simple explicit description in terms of the Witt components, but in terms of 
the Buium--Joyal components it does:
	$$
	\Delta\:(x_0,x_1,\dots) \mapsto ((x_{0},x_{1},\dots),(x_{1},x_{2},\dots),(x_{2},x_{3},\dots),\dots).
	$$
On the other hand, the Witt components are closer to
the Verschiebung operator and are sometimes more convenient for computations.
\subsection{Truncations}
It also follows from the construction of $W(R)$ above that for any integer $n\geq 0$, the quotient
	$$
	W_n(R)=R^{n+1}
	$$
of $W(R)$ consisting of truncated vectors $(x_0,\dots,x_n)$ is a quotient ring. Indeed,
the Leibniz rules for $\delta^{\circ n}$ depend only on the operators $\delta^{\circ i}$ for $i\leq n$.
Then $W(R)$ is naturally identified with the limit of the projective system of rings
	$$
	\cdots \longmap W_n(R) \longlabelmap{\trunc} \cdots \longlabelmap{\trunc} W_1(R) \longlabelmap{\trunc} W_0(R)
	$$
given by the truncation maps $\trunc\:(x_0,\dots,x_n)\mapsto (x_0,\dots,x_{n-1})$.

For $n \geq 0$ the Verschiebung ideal $V^{n+1}W(R)$ is defined to be the kernel of the truncation map:
	$$
	0 \to V^{n+1}W(R) \to W(R) \to W_{n}(R) \to 0.
	$$
It can also be expressed as the image of the $n$-th iterate of the usual Verschiebung 
operator $V\:W(R)\to W(R)$, but since it is not effortless to define $V$ in terms of the Buium--Joyal components 
and since we will not need it, we can ignore this.

The operator $\delta\:W(R)\to W(R)$ descends to the truncations but only at the expense of a shift in degree:
we have a set map $\delta\:W_n(R)\to W_{n-1}(R)$ given by 
	$$
	\delta(x_0,\dots,x_n)=(x_1,\dots,x_n).
	$$ 
Similarly, the Frobenius map $\varphi\:W(R)\to W(R)$ descends to a ring homomorphism
$W_n(R)\to W_{n-1}(R)$ given by $\varphi(x)=\trunc(x)^p+p\delta(x)$.

The comonad structure map also descends to the truncations in a degree-shifting sense. It becomes
a family of maps 
\begin{equation}
	\label{eq-truncated-comonad}
	\Delta\:W_{m+n}(R)\to W_n(W_m(R))
\end{equation}
which send a truncated Witt vector $(x_0,\dots,x_{m+n})$ to
	\begin{equation}
	\label{map:coplethysm}
	\big((x_0,x_1,\dots,x_m),(x_1,x_2,\dots,x_{m+1}),\dots,(x_n,x_{n+1},\dots,x_{m+n})\big).
	\end{equation}
	
\subsection{Ghost and coghost maps}
Given a $\dt$-ring $(R,\delta)$, we have an associated pair $(R,\varphi)$ of a ring with an endomorphism,
where $\varphi$ is the Frobenius lift defined in~(\ref{eq:phi-definition}).
This defines a functor from the category of $\dt$-rings to the category of rings with an endomorphism.

The forgetful functor from the category of rings with endomorphism to rings also has a right adjoint 
$R\mapsto (\Pi(R),\varphi_{\Pi(R)})$, where $\Pi(R)$ is $R\times R\times \cdots$ with, now, the product ring
structure and where $\varphi_{\Pi(R)}$ is defined by shifting:
$$\varphi_{\Pi(R)}\:\langle z_0,z_1,\dots\rangle\mapsto \langle z_1,z_2,\dots\rangle.$$
The induced map of rings with endomorphism
	$$
	\ghost\:W(R) \to \Pi(R)
	$$
is the so-called ghost map. In coordinates, it sends a Witt vector $(x_0,x_1,\dots)$ 
in Buium--Joyal components to 
$\langle Z_0,Z_1,\dots\rangle$, where $Z_n$ is the integral polynomial $Z_n(x_0,\dots,x_n)$ which expresses
$\varphi^{\circ n}$ in terms of the iterates of $\delta$:
	$$
	\varphi^{\circ n} = Z_n(\id,\delta,\delta^{\circ 2},\dots,\delta^{\circ n}).
	$$
Once again, such polynomials exist because we can apply the Leibniz rules (1)--(3) repeatedly, and one
can show they are unique by considering the case of $p$-torsion-free rings. (The polynomials $Z_n$ have no 
simple closed-form description. This is one drawback of the Buium--Joyal components,
compared to the Witt components, where the analogous polynomials are the usual Witt polynomials.)
Finally, the ghost map descends to a homomorphism on the truncations:
	$$
	\ghost_n\:W_n(R) \to R^{n+1},
	$$
where the target has the product ring structure.

\subsection{Witt vectors and jet spaces of sheaves}
\label{subsec:w-etale}
The functor $W_n$ on rings defines a functor on the category $\Aff$
of affine schemes, which we also denote by $W_n$:
\begin{align*}
	\Aff &\longlabelmap{W_n}\Aff \\
	\Spec(B) &\mapsto \Spec(W_n(B)).	
\end{align*}
To pass to non-affine schemes, we need to know this functor is well behaved with respect to localisation.
This is provided by the following theorem:

\begin{theo}\label{thm:w-etale}
	If a morphism $R\to R'$ of rings is \'etale, then so is the induced map $W_n(R)\to W_n(R')$. If
	in addition $R\to S$ is an arbitrary ring map, then the canonical map 
	$W_n(R')\otimes_{W_n(R)} W_n(S)\to W_n(R'\otimes_R S)$ is an isomorphism.
\end{theo}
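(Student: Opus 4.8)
\emph{Proof proposal.}
The plan is to prove both assertions together by induction on $n$, the case $n=0$ being trivial. It suffices to treat the case where $p$ is nilpotent on $R$; the general case follows by standard reductions (filtered colimits, and the open locus where $p$ is invertible, on which the ghost map identifies $W_n$ with a finite product of copies of the identity functor). Everything hangs on the exact sequence, natural in $R$,
\[
0\longrightarrow N_n(R)\longrightarrow W_n(R)\longlabelmap{\trunc}W_{n-1}(R)\longrightarrow 0,\qquad N_n(R)=V^nW(R)/V^{n+1}W(R),
\]
together with two classical facts about the ideal $N_n(R)$: the Verschiebung induces a natural isomorphism of abelian groups $R\isomto N_n(R)$, $r\mapsto V^n(r)$, under which — by the identities $x\cdot V(z)=V(\varphi(x)z)$ and $\varphi V=p$ — the $W_n(R)$-module structure corresponds to the one on $R$ obtained by restriction of scalars along the $n$-fold Frobenius $\varphi^{\circ n}\:W_n(R)\to W_0(R)=R$. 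We also record $N_n(R)^2=p^nN_n(R)$, so that $N_n(R)$ is nilpotent.

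The crux, and the step I expect to be hardest, is the equality of ideals $N_n(R)\,W_n(R')=N_n(R')$ for $R\to R'$ étale. By the module structure above, $N_n(R)W_n(R')$ is $V^n$ applied to the subgroup of $R'$ generated by the products $\varphi^{\circ n}(x')\cdot\bar r$ (with $x'\in W_n(R')$, $r\in R$); Teichmüller representatives show this subgroup contains $\mrm{im}(R)\cdot(R')^{p^n}$ (since $\varphi^{\circ n}([s])=s^{p^n}$), and Verschiebungs show it contains $p^nR'$ (since $\varphi^{\circ n}V^n=p^n$). Étaleness enters here: the relative Frobenius of $R'/pR'$ over $R/pR$ is an isomorphism, so $R'=\mrm{im}(R)(R')^{p^n}+pR'$; iterating, the quotient $R'/\bigl(\mrm{im}(R)(R')^{p^n}+p^nR'\bigr)$ is $p$-divisible and killed by $p^n$, hence zero, which gives the equality. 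Consequently $W_n(R')\otimes_{W_n(R)}W_{n-1}(R)=W_n(R')/N_n(R')=W_{n-1}(R')$, and a parallel analysis of the ideal $\ker(W_n(R)\to R)$ identifies $W_n(R')\otimes_{W_n(R),\varphi^{\circ n}}R$ with $R'$, via the $n$-fold relative Frobenius of $R'/R$.

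For the inductive step in étaleness: tensoring the exact sequence for $R$ with $W_n(R')$ over $W_n(R)$, the term $N_n(R)\otimes_{W_n(R)}W_n(R')$ is identified, by the two facts and the previous paragraph, with $R\otimes_{W_n(R),\varphi^{\circ n}}W_n(R')\cong R'$, mapping isomorphically onto $N_n(R')\subseteq W_n(R')$, so that $\mrm{Tor}_1^{W_n(R)}(W_n(R'),W_{n-1}(R))=0$. Since $W_{n-1}(R')$ is flat over $W_{n-1}(R)$ by induction and $N_n(R)$ is nilpotent, the local criterion of flatness makes $W_n(R')$ flat over $W_n(R)$; it is of finite presentation by reduction (Zariski-locally) to standard étale morphisms, using that $W_n$ sends localizations to localizations and finite algebras to finite algebras. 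Finally $W_n(R')/N_n(R)W_n(R')=W_{n-1}(R')$ is étale over $W_{n-1}(R)=W_n(R)/N_n(R)$ by induction, so $W_n(R')$ is a flat, finitely presented $W_n(R)$-algebra whose reduction modulo the nilpotent ideal $N_n(R)$ is étale — hence étale, by the rigidity of étale algebras along nilpotent thickenings.

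The base-change isomorphism $W_n(R')\otimes_{W_n(R)}W_n(S)\to W_n(R'\otimes_RS)$ then follows by the same induction: both sides are flat over $W_n(S)$ (the source by flat base change of the étaleness just proved, the target because $R'\otimes_RS$ is étale over $S$), and reducing modulo the nilpotent ideal $N_n(S)$ and using $N_n(S)\,W_n(R'\otimes_RS)=N_n(R'\otimes_RS)$ together with the inductive isomorphism in degree $n-1$, the map becomes — compatibly on both sides — the identification $W_{n-1}(R')\otimes_{W_{n-1}(R)}W_{n-1}(S)\isomto W_{n-1}(R'\otimes_RS)$; since a morphism of flat $W_n(S)$-algebras that is an isomorphism modulo a nilpotent ideal is an isomorphism, we are done. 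The passage to arbitrary base rings is the routine reduction flagged at the start.
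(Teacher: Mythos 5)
First, note that the paper does not prove this theorem at all: it cites 9.2 and 9.4 of Borger's \emph{Basic geometry of Witt vectors, I} and remarks that the only case it needs ($p$ nilpotent in $R$) is in the appendix of Langer--Zink. So your attempt has to be judged against those arguments, and measured that way it has a genuine gap at exactly the point you yourself flag as the crux. Your identity $N_n(R)\,W_n(R')=N_n(R')$ is fine (the iteration $R'=J+pR'\Rightarrow R'=J+p^nR'$ works), but the input you actually feed into the local criterion of flatness is the stronger statement that $N_n(R)\otimes_{W_n(R)}W_n(R')\to W_n(R')$ is \emph{injective}, which you obtain from the asserted isomorphism $W_n(R')\otimes_{W_n(R),\varphi^{\circ n}}R\isomto R'$. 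This is not a ``parallel analysis of an ideal'': since $\varphi^{\circ n}\:W_n(R)\to R$ is not surjective (its image modulo $p$ is generated by $p^n$-th powers), $R$ is not a quotient of $W_n(R)$, so the tensor product is not $W_n(R')/\ker(\varphi^{\circ n})W_n(R')$, and the ideal-generation computation only gives surjectivity of the map to $R'$. The injectivity is a Tor-vanishing statement of exactly the same depth as the flatness you are trying to establish, and your induction hypothesis (étaleness and base change at level $n-1$) does not yield it: the single-step claim $W_{n-1}(R)\otimes_{\varphi,W_n(R)}W_n(R')\isomto W_{n-1}(R')$ is a level-$n$ assertion. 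In Langer--Zink (and in van der Kallen's and Borger's treatments) this ``Frobenius base-change'' property is proved \emph{simultaneously} with étaleness, and the circularity you run into is broken by comparing $W_n(R')$ with the unique abstract étale deformation $B$ of $R'$ along the nilpotent surjection $W_n(R)\twoheadrightarrow R$: $B$ is flat by construction, so its $V$-graded pieces are $R\otimes_{\varphi^{\circ i},W_n(R)}B$ for free, and one then identifies these with $R'$ via relative Frobenius and checks the canonical map $B\to W_n(R')$ is an isomorphism on graded pieces. Your proposal skips precisely this mechanism, so as written the flatness step is unproved.

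A second, lesser issue is the opening reduction: the statement is for arbitrary rings, and ``filtered colimits plus the locus where $p$ is invertible'' does not reduce it to the $p$-nilpotent case, because $\Spec R$ is not covered by the loci where $p$ is nilpotent and where $p$ is invertible (take $R=\bZ_{(p)}$ or any finitely generated $\bZ$-algebra, which is all that filtered colimits buy you). Handling general $R$ is exactly the additional content of Borger's 9.2/9.4 beyond Langer--Zink and requires a separate gluing argument; for the purposes of this paper the $p$-nilpotent case suffices, but then the claim should be stated that way rather than waved through as standard. Also, the aside that ``$W_n$ sends finite algebras to finite algebras'' is not true for general (non-$F$-finite) rings, so the finite-presentation step needs a different justification, e.g.\ via the deformation $B$ above, which is finitely presented by construction.
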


For a proof, see 9.2 and 9.4 of \cite{Borger:BGWV-I}. 
We will only need this for rings in which $p$ is nilpotent, in which case it
was proved earlier in the appendix of Langer--Zink \cite{Langer-Zink:dRW}. (For the so-called big Witt vectors,
it was proved even earlier in van der Kallen~\cite{van-der-Kallen:Descent}.)

Let us now equip $\Aff$ with the \'etale topology (SGA4~\cite{SGA4.2}, exp.\ VII) and let $\etsheaf$ denote the corresponding category of sheaves of sets. Given any sheaf $X\in\etsheaf$, define the $n$-th arithmetic jet space (or Greenberg transform) 
$J^n(X)\:\Aff^\op\to\Set$ by
	$$
	J^n(X)\: T \mto X(W_n(T)).
	$$
This is in fact a sheaf:

\begin{theo}
	The presheaf $J^n(X)$ on $\Aff$ is a sheaf in the \'etale topology.
	The resulting functor $J^n\:\etsheaf\to\etsheaf$ has
	a left adjoint $W_n\:\etsheaf\to\etsheaf$. The functor $W_n$
	agrees with the usual Witt vector functor on the category of affine schemes:
		$$
		W_n(\Spec(R)) \longisomap \Spec(W_n(R))
		$$
\end{theo}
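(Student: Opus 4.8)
The plan is to verify the three assertions in sequence, using étale descent for sheaves together with Theorem~\ref{thm:w-etale} as the only nontrivial input. First I would show that $J^n(X)$ is a sheaf. Let $\{T_i\to T\}$ be an étale covering in $\Aff$. By the first part of Theorem~\ref{thm:w-etale}, each $W_n(T)\to W_n(T_i)$ is étale, and by the second part (base change along $W_n(T)\to W_n(T_i)$ and $W_n(T)\to W_n(T_j)$, using that $W_n$ commutes with the relevant tensor products) the family $\{W_n(T_i)\to W_n(T)\}$ is again an étale covering, with $W_n(T_i)\otimes_{W_n(T)}W_n(T_j)\cong W_n(T_i\otimes_T T_j)$. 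Therefore the sheaf condition for $X$ applied to the covering $\{W_n(T_i)\to W_n(T)\}$ says exactly that $X(W_n(T))\to\prod_i X(W_n(T_i))\rightrightarrows\prod_{i,j}X(W_n(T_i\otimes_T T_j))$ is an equaliser, i.e.\ that $J^n(X)(T)\to\prod_i J^n(X)(T_i)\rightrightarrows\prod_{i,j}J^n(X)(T_i\times_T T_j)$ is an equaliser. Hence $J^n(X)$ is a sheaf.

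Next I would produce the left adjoint $W_n\:\etsheaf\to\etsheaf$. The functor $J^n\:\etsheaf\to\etsheaf$ is defined by precomposition with the functor $W_n\:\Aff^\op\to\Aff^\op$ on affines (viewing presheaves as functors out of $\Aff^\op$), followed by sheafification — but by the previous step no sheafification is needed. A functor between presheaf categories given by restriction along a functor $F\:\mc{A}\to\mc{B}$ always has a left adjoint, namely left Kan extension along $F^\op$; concretely, for a representable $h_T$ one sets $W_n(h_T)=h_{W_n(T)}$, and one extends to all presheaves by the fact that every presheaf is a colimit of representables and left adjoints preserve colimits. Since $J^n$ lands in sheaves and is exhibited here as restriction-of-presheaves (no sheafification), its left adjoint on sheaves is the composite of this presheaf-level left Kan extension with sheafification. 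Unwinding the adjunction $\Hom_{\etsheaf}(W_n(X),Y)\cong\Hom_{\etsheaf}(X,J^n(Y))$ on representables $X=h_T$ gives $Y(W_n(T))\cong\Hom(h_{W_n(T)},Y)$, which is the Yoneda lemma; so the adjunction holds on representables and hence, both sides being colimit-preserving in $X$, on all sheaves.

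Finally, the comparison $W_n(\Spec R)\isomto\Spec(W_n(R))$ on affines: by construction $W_n(h_{\Spec R})=h_{W_n(\Spec R)}=h_{\Spec(W_n(R))}$ at the level of representable presheaves, and since the étale topology is subcanonical these representables are already sheaves, so $W_n$ takes the sheaf represented by $\Spec R$ to the sheaf represented by $\Spec(W_n(R))$, compatibly with the ring-level functor $W_n$ of~\ref{sec:lambda-structure}. The main obstacle is entirely contained in the first step — namely, checking that $W_n$ carries étale coverings to étale coverings and commutes with the fibre-product (tensor) construction well enough that the sheaf condition transports; this is precisely what Theorem~\ref{thm:w-etale} supplies, and once it is invoked the rest is formal category theory (left Kan extension, Yoneda, subcanonicity of the étale topology). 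I would take care to note that everything is compatible as $n$ varies via the truncation maps $\trunc$, so that the $W_n$ assemble into a pro-system whose "limit" is the ind-object $W$, but that compatibility is immediate from functoriality of the constructions above.
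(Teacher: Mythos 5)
Your argument is essentially the paper's: the paper dismisses this theorem with ``follows by general sheaf theory from theorem~\ref{thm:w-etale}'', and your proposal is exactly the natural unwinding of that remark (transport the sheaf condition along the lifted family $\{W_n(T_i)\to W_n(T)\}$ using \'etaleness and the base-change isomorphism, then get $W_n$ on sheaves by left Kan extension plus sheafification, with the affine identification coming from Yoneda and subcanonicity).

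One point of justification deserves care: you assert that $\{W_n(T_i)\to W_n(T)\}$ is again an \'etale \emph{covering} and attribute the covering property to ``the second part'' of theorem~\ref{thm:w-etale}. The base-change isomorphism $W_n(R')\otimes_{W_n(R)}W_n(S)\cong W_n(R'\otimes_R S)$ gives you the identification of fibre products but says nothing about joint surjectivity of the lifted family, and without surjectivity the sheaf condition for $X$ cannot be invoked for that family. The fact you need --- that $W_n$ carries faithfully flat \'etale maps (equivalently, \'etale covers) to \'etale covers --- is true and is contained in the references the paper cites for theorem~\ref{thm:w-etale} (Borger, \emph{BGWV-I} \S 9, and the Langer--Zink appendix in the $p$-nilpotent case), but it is a separate statement from the two listed in the theorem as printed, so you should quote it explicitly rather than derive it from the base-change clause. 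With that citation added, the rest of your argument (Kan extension, colimit-preservation, Yoneda on representables) is formal and correct.
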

This follows by general sheaf theory from theorem \ref{thm:w-etale}.

\begin{theo}
	If $X$ is a scheme (or an algebraic space), then so are $J^n(X)$ and $W_n(X)$.
\end{theo}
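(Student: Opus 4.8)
The plan is to treat $W_n$ and $J^n$ by separate arguments, both resting on the affine case recorded in theorem~\ref{thm:w-etale}.

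For $W_n(X)$ the argument is essentially formal. Since $W_n\colon\Sh\to\Sh$ is a left adjoint (to $J^n$) it preserves all colimits, and since $\Sh$ is a topos it preserves epimorphisms. From the étale‑preservation and base‑change statements of theorem~\ref{thm:w-etale}, applied to localisations $R\to R[1/f]$, one gets that $W_n$ carries open immersions of schemes to open immersions of sheaves (and étale maps to étale maps): a localisation goes to a map which is étale and a monomorphism (using $R[1/f]\otimes_RR[1/f]=R[1/f]$), hence an open immersion, and a general open immersion is glued from these. Hence, given an affine open cover $\{U_i\}$ of a scheme $X$, the map $\coprod_iW_n(U_i)=W_n(\coprod_iU_i)\to W_n(X)$ is an epimorphism of sheaves, each $W_n(U_i)=\Spec(W_n(\mathcal O(U_i)))$ is affine, and each $W_n(U_i)\to W_n(X)$ is an open immersion; so $W_n(X)$ is a scheme. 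For an algebraic space with étale presentation $R\rightrightarrows U$ (both schemes), the base‑change statement of theorem~\ref{thm:w-etale} shows $W_n(R)\rightrightarrows W_n(U)$ is again an étale equivalence relation of schemes, and $W_n(X)=W_n(U)/W_n(R)$; so $W_n(X)$ is an algebraic space.

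For $J^n(X)$ more is needed. First, for $X=\Spec R$ affine one checks directly that $J^n(\Spec R)$ is affine (write $W_n$ in coordinates; or note that $W_n$ on rings, being computed coordinate‑wise on Buium--Joyal components, preserves limits, hence has a left adjoint $L_n$, and $J^n(\Spec R)=\Spec(L_nR)$). In general $J^n$ is a right adjoint and $X\mapsto X(W_n(-))$ is left exact, so $J^n$ preserves monomorphisms and fibre products. The geometric input, valid because $p$ is nilpotent in the rings we consider, is that the truncation $W_n(T)\to T$ has \emph{nilpotent} kernel --- a short computation with the Verschiebung, $\big(VW_n(T)\big)^2\subseteq p\cdot VW_n(T)$ together with $V^{n+1}W_n(T)=0$ --- so that $\Spec(T)\to\Spec(W_n(T))$ is a universal homeomorphism, inducing equivalences of Zariski and étale sites. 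Granting this, for an open immersion $U\hookrightarrow X$ and any $\Spec(A)\to J^n(X)$ classified by $t\colon\Spec(W_n(A))\to X$, the fibre product $J^n(U)\times_{J^n(X)}\Spec(A)$ is the open subscheme of $\Spec(A)$ corresponding to $t^{-1}(U)\subseteq\Spec(W_n(A))$; hence $J^n(U)\to J^n(X)$ is an open immersion. And for an affine cover $\{U_i\}$ of $X$ and any $s\colon\Spec(W_n(T))\to X$, the open cover $\{s^{-1}(U_i)\}$ descends along the homeomorphism to a Zariski cover of $\Spec(T)$ over which $s$ factors through some $U_i$; hence $\coprod_iJ^n(U_i)\to J^n(X)$ is an epimorphism of sheaves, and $J^n(X)$ has the affine open cover $\{J^n(U_i)\}$. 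The algebraic‑space case is identical, using that $J^n$ preserves the fibre products of an étale presentation, that the structure maps remain étale (topological invariance of the étale site along the above nilpotent immersion), and that $J^n(U)\to J^n(X)$ is an étale epimorphism.

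The main obstacle I anticipate is precisely this geometric input for $J^n$ and its use: establishing that $W_n(T)\to T$ has nilpotent kernel, and --- above all --- deducing from it that a $W_n(T)$‑point of $X$ factors, Zariski‑locally on $\Spec(T)$, through a member of a given open cover. For $W_n$ the analogous surjectivity was free (a left adjoint preserves epimorphisms); for $J^n$ it genuinely requires that passing from $T$ to $W_n(T)$ not change the topology, and indeed it fails without $p$ nilpotent (e.g.\ $J^n(\mathbf P^1_{\mathbf Q})\cong(\mathbf P^1_{\mathbf Q})^{n+1}$ is not covered by the $J^n$ of the two standard charts), even though the conclusion of the theorem remains true there.
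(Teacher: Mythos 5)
First, note that the paper itself does not prove this statement: it is quoted from \cite{Borger:BGWV-II} (12.1, 15.1, 15.6), so the comparison is really with your sketch versus those general results. Your argument has a genuine gap, and you have put your finger on it yourself: the theorem carries no $p$-adic hypothesis, but your proof of the covering statement for $J^n$ does. The claim that $\coprod_i J^n(U_i)\to J^n(X)$ is an epimorphism rests on the truncation $W_n(T)\to T$ being a nilpotent (hence universal-homeomorphic) thickening, and that is only true when $p$ is nilpotent in $T$ (this is exactly the content of proposition~\ref{prop:ghost-nil-immersion}); over $\mathbf{Z}[1/p]$-algebras the ghost map identifies $W_n(T)$ with $T^{n+1}$, $J^n(X)$ with $X^{n+1}$, and, as your own example $J^n(\mathbf{P}^1_{\mathbf{Q}})\cong(\mathbf{P}^1_{\mathbf{Q}})^{n+1}$ shows, the jet spaces of an open cover do not cover. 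So what you prove is the theorem for $p$-adic $X$ (equivalently, after restricting the test objects to affines on which $p$ is nilpotent), not the theorem as stated; closing the gap for a general base is precisely the nontrivial content of the cited results and needs a different idea for interpolating between the $p$-nilpotent locus and the ghost description, since these do not form an open cover of $\Spec(\mathbf{Z})$.

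There is also a softer, but real, missing step in the $W_n$ half. That $\coprod_i W_n(U_i)\to W_n(X)$ is an epimorphism is indeed formal (left adjoints preserve epimorphisms), and theorem~\ref{thm:w-etale} does give that $W_n$ of a principal open immersion of affines is an \'etale monomorphism, hence an open immersion. But for a non-affine $X$ the assertion that $W_n(U_i)\to W_n(X)$ is a \emph{representable} open immersion is exactly where the work lies: one must identify $W_n(U_i)\times_{W_n(X)}W_n(U_j)$ with $W_n(U_i\cap U_j)$ (and similarly for triple overlaps, to know the glued equivalence relation is the \v{C}ech groupoid), and this does not follow formally from the affine base-change statement of theorem~\ref{thm:w-etale}; compare the proof of theorem~\ref{theo:witt-et-fibre}, which needs the algebraic-space case from \cite{Borger:BGWV-II} as input even to make such identifications over ind-objects. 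So the $W_n$ skeleton is the right one, but the step you dismiss as ``glued from these'' is the technical heart of the cited proof and should either be argued (construct the glued scheme from the $W_n(U_i)$ and $W_n(U_{ij})$ and verify it has the universal property of the sheaf colimit $W_n(X)$) or explicitly quoted.
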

\begin{proof}
	See 12.1, 15.1, 15.6 of \cite{Borger:BGWV-II}.
\end{proof}

We pass to $n=\infty$, by taking a limit: 
	$$
	J(X)(T) := \lim_n \big(J^n(X)(T)\big) = \lim_n X(W_n(T))
	$$
So for a ring $C$, a $C$-point of $J(X)$ is a compatible family of $W_n(C)$-points of $X$.
In the affine case, $X=\Spec(R)$, we have
	$$
	J(\Spec(R))(\Spec(C)) = \lim_n \Hom(R,W_n(C)) = \Hom(R,W(C)) = \Hom(J(R), C),
	$$
where $J(R)$ denotes the free $\dt$-ring on $R$, as in (\ref{eq:circle-product-def}).
Thus we have a canonical identification
	$$
	J(\Spec(R)) = \Spec(J(R)),
	$$
and so the functor $J$ we just defined on sheaves agrees with the one already defined on rings.

The truncation maps $\trunc\:W_{n+1}(R)\to W_{n}(R)$ on rings induce functorial projections 
\begin{equation}
	\label{map:truncation-definition}
	\jp\:J^{n+1}(X)\to J^n(X),	
\end{equation}
for $X\in\Sh$. They in turn induce
functorial maps $W_n(X)\to W_{n+1}(X)$ for $X\in\etsheaf$, and hence an inductive system
	$$
	W_0(X) \to W_1(X) \to \cdots.
	$$
We then define 
	$$
	W(X)=\colim_n W_n(X).
	$$

Observe that it does not matter whether we take this colimit
in the category of sheaves or presheaves, since every object of $\Aff$ is quasi-compact and quasi-separated
in the \'etale topology. Also, we emphasise that generally one has
	$$
	W(\Spec(R))\neq \Spec(W(R)),
	$$ 
unlike the case for $W_n$ with $n$ finite.
For example, if $R=\bZ/p\bZ$, this becomes the familiar fact $\Spf(\bZ_p)\neq\Spec(\bZ_p)$.

Similarly, the maps $\varphi\: W_n(R)\to W_{n-1}(R)$ induce maps $\varphi\:J^n(X)\to J^{n-1}(X)$ whose limit  
\begin{equation}
	\label{map:phi-J-def}
\varphi_{J(X)}\: J(X)\to J(X)	
\end{equation}
under the projections $\jp$ is a lift of the Frobenius. We also obtain a lift of the Frobenius on the Witt vectors \[\varphi_{W(X)}\: W(X)\to W(X).\]

The ghost maps $\ghost_n\:W_n(C)\to C^{n+1}$ induce maps on sheaves \[\ghost_n\: \coprod_{n+1} X\to W_{n}(X)\] which we also call the ghost maps. In the colimit, they become a map
	\[
	w\: \coprod_{\bN} X\to W(X).
	\] 
The map $X\to W(X)$ obtained by restricting $w$ along the inclusion $X\to \coprod_{\bN} X$ corresponding to $0\in \bN$ is called the first ghost component. 

Finally, we also have \emph{coghost} maps which are dual to the ghost maps:
	$$
	\coghost_{n}\: J^n(X)\to X^{n+1},
	$$
On $C$-points, $\coghost_n$ is defined to be the map
	$$
	J^n(X)(C) = X(W_n(C)) \longlabelmap{X(\ghost_n)} X(C^{n+1}) = X^{n+1}(C).
	$$
The coghost maps are compatible with the projections $u$ and hence pass to a map
	$$
	\coghost\: J(X)\to X^{\bN}
	$$
in the limit.

\begin{theo}[cf.\ 15.2, 15.3 of \cite{Borger:BGWV-II}]\label{theo:witt-et-fibre} Let $Y=\colim_i Y_i$ be an 
ind-algebraic space and let $X\to Y$ be a morphism which is representable (by algebraic spaces) and \'etale. Then for all $m, n \leq \infty$, the following hold:

\begin{enumerate}[label=\textup{(\roman*)}]
\item The induced map $W_n(X)\to W_n(Y)$ is representable and \'etale.
\item The induced map 
	\[
	W_n(X)\to W_{n+m}(X)\times_{W_{n+m}(Y)}W_n(Y)
	\] 
is an isomorphism.
\item For all morphisms $Y'\to Y$ where $Y'$ is an algebraic space the natural map 
	\[
	W_n(X\times_Y Y')\to W_n(X)\times_{W_n(Y)}W_n(Y')
	\] 
is an isomorphism.
\end{enumerate}
\end{theo}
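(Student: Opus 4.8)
The plan is to reduce all three assertions to the case where $Y=\Spec R$ is affine, where they become statements about the Witt rings $W_n(R)$, and then to climb back up by étale descent and filtered colimits. (We may assume $p$ is nilpotent; the general case is \cite{Borger:BGWV-II}.) When moreover $X=\Spec R'$ is affine with $R\to R'$ étale, part (i) is exactly the étaleness of $W_n(R)\to W_n(R')$ from Theorem~\ref{thm:w-etale}, and part (iii) with $Y'=\Spec S$ affine is the base-change isomorphism $W_n(R')\otimes_{W_n(R)}W_n(S)\isomto W_n(R'\otimes_R S)$ of the same theorem, applied to $X\times_Y Y'=\Spec(R'\otimes_R S)$. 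A general $Y'$ over affine $X,Y$ is handled by writing $Y'$ as a colimit of affines, since $W_n$ preserves colimits (it is a left adjoint, and $W=\colim_n W_n$) and colimits are universal in $\Sh$; a general algebraic space $X$ étale over the affine $Y$ is handled by choosing an affine étale cover $V\to X$, noting that $V\times_X V$ is a union of affines étale over $Y$ because the diagonal of the unramified morphism $X\to Y$ is an open immersion, applying $W_n$ to the resulting étale equivalence relation, and using the affine cases of (i) and (iii) to realise $W_n(X)$ as its quotient over $W_n(Y)$. Finally, passing to a general algebraic space $Y$ and then to an ind-algebraic space $Y=\colim_iY_i$ is a routine descent-and-colimit argument: an étale cover of $Y$ induces an étale cover of $W_n(Y)$ --- here one uses that $\Spec W_n(R)\to\Spec R$ is a homeomorphism, its kernel being nilpotent when $p$ is nilpotent, so that surjectivity is preserved, and that a morphism between objects étale over a common base is étale --- and $W_n$, as well as finite limits, commutes with the filtered colimit defining $Y$.

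The one step that is not a formal consequence of Theorem~\ref{thm:w-etale} is part (ii), and this is where I expect the real work to lie. By the reductions above it suffices to treat affine $Y=\Spec R$ and $X=\Spec R'$ with $R\to R'$ étale and $m,n$ finite: the case $n=\infty$ is trivial, and the case $m=\infty$ follows from the finite case because $W(-)=\colim_k W_k(-)$, because filtered colimits commute with the fibre product in question, and because each transition map $W_k(Z)\to W_j(Z)$ is a monomorphism (on affines it is $\Spec$ of the surjective truncation map). The claim then becomes: for $R\to R'$ étale and $k\le N$, the natural map
\[
W_N(R')\otimes_{W_N(R)}W_k(R)\longrightarrow W_k(R'),
\]
induced by the truncation $W_N(R)\to W_k(R)$, is an isomorphism. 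It is surjective; and since $W_N(R)\to W_N(R')$ is flat (even étale), its source is the quotient of $W_N(R')$ by the ideal generated by $V^{k+1}W_N(R)=\ker(W_N(R)\to W_k(R))$. As that ideal is contained in $V^{k+1}W_N(R')=\ker(W_N(R')\to W_k(R'))$, the assertion is equivalent to the identity $V^{k+1}W_N(R')=W_N(R')\cdot V^{k+1}W_N(R)$.

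I would prove this by dévissage along the Verschiebung filtration $V^{k+1}W_N\supseteq V^{k+2}W_N\supseteq\dots\supseteq V^{N+1}W_N=0$: by flatness of $W_N(R)\to W_N(R')$, the filtration being finite, it is enough that the induced map on each associated graded piece, $\mathrm{gr}^jW_N(R)\otimes_{W_N(R)}W_N(R')\to\mathrm{gr}^jW_N(R')$, be surjective for $k+1\le j\le N$. Here $\mathrm{gr}^jW_N(R)=V^jW_N(R)/V^{j+1}W_N(R)$ is canonically identified with $R$, the ring $W_N(R)$ acting through its $j$-th ghost component $(x_0,x_1,\dots)\mapsto Z_j(x_0,\dots,x_j)$ --- this comes from the Verschiebung projection formula $V^j(a)\cdot x=V^j\!\big(a\cdot\varphi^{\circ j}(x)\big)$ --- and likewise for $R'$, so the map is the natural map $R\otimes_{W_N(R),\,Z_j}W_N(R')\to R'$, and its surjectivity says that $R'$ is generated over the subring $Z_j(W_N(R'))\subseteq R'$ by the image of $R$. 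Since $j\ge 1$, one has $Z_j\equiv x_0^{p^j}$ modulo $p$, so $Z_j(W_N(R'))$ contains, modulo $p$, the subring of $p^j$-th powers of $R'$; as $p$ is nilpotent it therefore suffices to check this surjectivity after reduction modulo $p$, where it follows from the formal étaleness of $R\to R'$ --- equivalently, from the fact that the relative Frobenius of an étale morphism of $\bF_p$-algebras is an isomorphism. This graded-pieces analysis --- pinning down the module structure on $\mathrm{gr}^jW_N$ via ghost components and then exploiting the rigidity of étale morphisms with respect to Frobenius --- is the crux of the matter; everything else is formal once Theorem~\ref{thm:w-etale} is in hand.
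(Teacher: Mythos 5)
Your route is genuinely different from the one in the paper, and the difference is worth spelling out. The paper's proof is almost entirely formal: it takes the case where $X$, $Y$, $Y'$ are algebraic spaces as given, citing 15.2--15.4 of \cite{Borger:BGWV-II}, and the only content it supplies is the bootstrapping to ind-algebraic spaces --- factoring a quasi-compact quasi-separated $Y'$ through some $Y_i$, writing the maps in (ii) and (iii) as filtered colimits of the algebraic-space-level isomorphisms, and checking (i) after base change along the cover $(W_n(Y_i)\to W_n(Y))_i$. You invert this: you dismiss the ind-passage as ``routine descent-and-colimit'' (which it is, though it is the part the paper actually writes out) and instead re-derive the algebraic-space case from theorem~\ref{thm:w-etale}. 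Your dévissage for the affine case of (ii) --- the Verschiebung filtration, the identification of $\mathrm{gr}^j$ with $R$ acted on through the $j$-th ghost component via the projection formula, and the reduction mod $p$ to the invertibility of the relative Frobenius of an \'etale map of $\bF_p$-algebras --- is correct for $p$ nilpotent and is essentially the known proof of van der Kallen and Langer--Zink; it buys a self-contained argument in the $p$-adic setting (the only one this paper uses), at the cost of deferring the general statement to the same citation the paper uses. The weaker point of your sketch is the descent from affines to algebraic spaces: realising $W_n(X)$ as the quotient of the \'etale equivalence relation $W_n(V\times_X V)\rightrightarrows W_n(V)$ and concluding representability and \'etaleness over $W_n(Y)$ requires knowing that $W_n$ preserves open immersions, \'etale covers and \'etale equivalence relations and that the resulting quotient sheaf is an algebraic space; these are not formal consequences of theorem~\ref{thm:w-etale} but are precisely the content of sections 10--15 of \cite{Borger:BGWV-II} that the paper's proof outsources. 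So your plan is sound, but as written it silently re-proves (or re-assumes) the cited machinery, whereas the paper's proof isolates exactly what is new here, namely the passage from algebraic spaces to ind-algebraic spaces.
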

\begin{proof} 
First observe that the case where $m,n<\infty$ implies the general case. Indeed, (ii) and (iii) follow from the finite case by taking colimits; then (i) then follows from (ii) and (iii) and the finite case. So it is enough to assume $m,n<\infty$.

Second observe that if $X, Y, Y'$ are algebraic spaces, this the theorem is 15.2 and 15.3 of 
\cite{Borger:BGWV-II}. 

Now write $X_i=X\times_Y Y_i$. Then $X_i$ is an algebraic space because $X$ is representable over $Y$. 

(iii) If $Y'$ is quasi-compact and quasi-separated, then $Y'\to Y$ factors through $Y_i\to Y$ for some $i$.
So we have 
$$
W_n(X\times_Y Y')=W_n(X_i\times_{Y_i} Y') = W_n(X_i)\times_{W_n(Y_i)} W_n(Y'),
$$
since the result holds for algebraic spaces. Hence we
obtain the commutative diagram \[\xymatrix{W_n(X\times_Y Y') \ar[r] & W_n(X)\times_{W_n(Y)}W_n(Y')\\
W_n(X_i)\times_{W_n(Y_i)}W_n(Y') \ar@{=}[u]\ar[r]& (W_n(X)\times_{W_n(Y)}W_n(Y_i))\times_{W_n(Y_i)}W_n(Y').\ar@{=}[u]}\] Therefore, it is enough to show that \[W_n(X_i)\to W_n(X)\times_{W_n(Y)}W_n(Y_i)\] is an isomorphism. However, this is nothing but the colimit of the maps \[W_n(X_i)\to W_n(X_j)\times_{W_n(Y_j)}W_n(Y_i)\] for $j\geq i$, which are isomorphisms by 15.2 of \cite{Borger:BGWV-II}.

For general $Y'_\lambda$, we can reduce to the case where $Y'$ is quasi-compact and quasi-separated by taking an affine cover $Y'_\lambda\to Y'.$ 
For we then have 
\begin{eqnarray*} 
	W_n(X\times_Y Y')\times_{W_n(Y')}W_n(Y'_\lambda) & = & W_n(X\times_ Y Y'_\lambda)\\ 
	&\isomto& W_n(X)\times_{W_n(Y)}W_n(Y'_\lambda)\\
	&=&(W_n(X)\times_{W_n(Y)}W_n(Y'))\times_{W_n(Y')}W_n(Y'_\lambda)
\end{eqnarray*}
and so the map 
\[W_n(X\times_Y Y')\longmap W_n(X)\times_{W_n(Y)} W_n(Y')\]
is an isomorphism.

(ii) The map in question is the filtered colimit over $i$ of the maps 
\[
W_n(X_i) \to W_{n+m}(X_i)\times_{W_{n+m}(Y_i)}W_n(Y_i)
\] 
which are isomorphisms by 15.4 of \cite{Borger:BGWV-II}.

(i) The family $(W_n(Y_i)\to W_n(Y))_{i}$ is a cover in $\Sh$ and a morphism being representable by algebraic spaces and \'etale is local in the \'etale topology, so that it is enough to show the claim after base change along each $W_n(Y_i)\to W_n(Y)$. 
The map \[W_n(X)\times_{W_n(Y)}W_n(Y_i)\to W_n(Y_i)\] under the isomorphism \[W_n(X)\times_{W_n(Y)}W_n(Y_i)\isomto W(X_i)\] of (iii) is just the natural map $W_n(X_i)\to W_n(Y_i)$ which is \'etale by 15.4 of \cite{Borger:BGWV-II}.
\end{proof}

\subsection{$\dt$-structures on sheaves}\label{subsec:delta-sheaves}
The monad structure on the ring functor $R\mapsto J(R)$ of~(\ref{eq:circle-product-def}) prolongs naturally to
a comonad structure on the sheaf functor $J\:\Sh\to\Sh$, as we now explain.
The truncated comonad maps of
(\ref{eq-truncated-comonad}) induce maps between iterated truncated jet spaces:
	\begin{equation}
		\label{map:coplethysm-on-jet-spaces}
		\nu_{m,n}\:J^{m+n}(X) \to J^m(J^n(X))		
	\end{equation}
and hence in the limit
	$$
	\nu\:J(X) \to J(J(X))
	$$
This is the comultiplication map $J\to J \circ J$ for the comonad structure. The co-unit is given by
the projection $\jp\:J(X) \to J^0(X)=X$.
We also note that since $W$ is the left adjoint of $J$, it inherits a monad structure from
the comonad structure on $J$.

We define a $\dt$-structure on a sheaf $X$ to be a co-action of the comonad $J$ on $X$. This means a map $\alpha\:X\to J(X)$ such that the following diagrams commute:
\begin{equation}
\xymatrix{
X \ar^-\alpha[r]\ar_-\alpha[d] & J(X) \ar^-{J(\alpha)}[d] 
	& & X \ar^-\alpha[r]\ar_-{\mathrm{id}_X}[dr] & J(X) \ar^\jp[d]\\
J(X) \ar^-{\nu}[r] & J(J(X)) 
	& & & X
}\label{eqn:delta-structure}
\end{equation}
This definition could equivalently be given as an action of the monad $W$ on $X$ and we shall often use this definition instead.

If $X$ is affine, this definition of $\dt$-structure agrees with the one in section~\ref{sec:lambda-structure} above: there is a canonical bijection between the set of
$\dt$-structures on $\Spec(R)$ and the set of $\dt$-structures on $R$. The
category of $\dt$-sheaves and $\dt$-morphisms will be denoted by $\dtSh{}$.

Finally, if $X$ is a $\dt$-sheaf, writing $\varphi_X\:X\to X$ for the composition
		$$
		X \longlabelmap{\alpha} J(X) \longlabelmap{\varphi_{J(X)}} J(X) \longlabelmap{\jp} X.
		$$ 
we find that $\varphi_X\: X\to X$ is a lift of the Frobenius. (Note that when $X=J(Y)$, this map $\varphi_X$ 
agrees with the previously defined map $\varphi_{J(Y)}$.) Therefore,
every $\dt$-sheaf is equipped with a lift of the Frobenius and all $\dt$-morphisms are compatible with these
lifts.

\subsection{Relative $\dt$-structures}\label{sec:rel-lambda}
Let $S$ be a sheaf with a $\dt$-structure and denote by $\dtSh{S}$ the slice category of $\dtSh{}$ over $S$. 
Just as $\dtSh{}$ is comonadic over $\Sh$, the relative analogue 
$\dtSh{S}$ is comonadic over $\Sh_S$ (although no longer by definition). The comonad is
	$$
	X\mto J_S(X):= J(X) \times_{J(S)} S
	$$
where the map $S\to J(S)$ implicit in the product is the $\dt$-structure map $\alpha$ on $S$. Since both $\alpha$ and the map $J(X)\to J(S)$ are morphisms of $\dt$-sheaves, $J_S(X)$ inherits a canonical $\dt$-structure and is the fibre product in the category of $\dt$-sheaves. As in the absolute setting, we have truncations 
	$$
	J_S^n(X) = J^n(X)\times_{J^n(S)}S
	$$
and comultiplication maps
	\begin{equation}
		\label{map:relative-trunc-jet-copleth}
		\nu_{m,n} \: J_S^{m+n}(X) \longmap J_S^m J_S^n(X)
	\end{equation}
induced by the absolute comultiplication maps, which we abusively also denote by $\nu_{m,n}$.

It is also true that $\dtSh{S}$ is monadic over $\Sh$.
The monad is just $X\mto W(X)$ again, but here $W(X)$ is viewed as an object of $\dtSh{S}$ via the
composition 
$$
W(X)\longmap W(S) \longmap S
$$
via the adjunct $W(S)\to S$ of the $\dt$-structure map $\alpha$. So with some abuse of 
notation, we will still use $W$ to denote this monad on $\dtSh{S}$.

Given a $\dt_S$-sheaf $X$, the Frobenius lift $\varphi_X\: X\to X$ does not lie over the identity map on $S$
but over the Frobenius lift $\varphi_S\: S\to S$. Therefore, we obtain a relative Frobenius lift 
\begin{equation}
\label{map:rel-frob-lift}	
\varphi_{X/S}\:X\to \varphi_S^*(X)
\end{equation}
which is a morphism in $\dtSh{S}$.

The absolute coghost maps $\coghost_{n}\: J^n(S)\to S^{n+1}$ and $\coghost_{n}\: J^n(X)\to X^{n+1}$ 
induce relative coghost maps
	\[
	\coghost_{X/S, n}\: J_S^n(X)=J^n(X)\times_{J^n(S)}S\to X^{n+1}\times_{S^{n+1}}S=\prod_{i=0}^n \varphi_{S}^{\circ i *}(X),
	\]
and taking the limit of these maps, a relative coghost map in the infinite-length setting:
	\begin{equation}
		\label{map:rel-coghost-infinite-length}
	\coghost_{X/S}\: J_S(X)\to X^{\bN}\times_{S^{\bN}}S.	
	\end{equation}

Since the forgetful functor $\dtSh{S}\to \Sh_S$ is both monadic and comonadic, all limits and colimits
exist in $\dtSh{S}$, and the underlying $S$-sheaf of a limit or colimit can be computed in $\Sh_S$. In
particular, a group structure on an object $X\in\dtSh{S}$ is the same as a group structure on the
underlying $S$-sheaf such that the structure morphisms (multiplication, identity, and inverse) are
morphisms of $\dt_S$-sheaves.

\section{Further properties of $\dt$-structures and Witt vectors}

The purpose of this section is to establish some basic results about $\dt$-structures and Witt vectors.
They are only used in the proofs of the main theorems. None of them is needed to make sense of the theorems
themselves. So this section can safely be skipped and referred back to as needed.

\subsection{Relative $\dt$-structures are $\dt$-local with respect to the base}
Let $S$ be a $\dt$-sheaf and let $X$ be an $S$-sheaf. Then for each $\dt_{S}$-sheaf $S'$ we can consider the set \[\dt_{X/S}(S'):=\{\dt_{S'}\text{-structures on }X\times_S S'\}.\] If $S''\to S'$ is a morphism of $\dt_{S}$-sheaves and $X\times_S S'$ is equipped with a $\dt_{S'}$-structure then the fibre product $(X\times_S S')\times_{S'} S''=X\times_S S''$ is a $\dt_{S''}$-sheaf, or in other words, we are given a $\dt_{S''}$-structure on $X\times_S S''$. This gives a map \[\dt_{X/S}(S')\to \dt_{X/S}(S'')\] which makes the assignment $S'\mto \dt_{X/S}(S')$ a presheaf on $\dtSh{S}.$

\begin{lemm}\label{lemm:lambda-structures-etale} The functor $\dt_{X/S}$ is a sheaf (for the canonical topology on $\dtSh{S}$).
\end{lemm}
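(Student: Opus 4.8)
The plan is to reduce the sheaf condition for $\dt_{X/S}$ to descent of morphisms in the topos $\Sh_S$, after first identifying the covering families. The forgetful functor $U\colon\dtSh{S}\to\Sh_S$ is comonadic, hence conservative, and by the remarks following~(\ref{map:rel-coghost-infinite-length}) it preserves all small limits and colimits; a conservative functor that preserves limits and colimits reflects them, and $U$ preserves fibre products. It follows that a family $\{S'_i\to S'\}$ in $\dtSh{S}$ is a universal effective epimorphic family --- that is, a covering family for the canonical topology --- if and only if $\{U(S'_i)\to U(S')\}$ is one in $\Sh_S$, and since $\Sh_S$ is a topos this means exactly that the $S'_i$ jointly cover $S'$. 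So it is enough to check the sheaf condition for jointly surjective families $\{S'_i\to S'\}_{i\in I}$ of $\dt_S$-morphisms.

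Fix such a family and write $S'_{ij}=S'_i\times_{S'}S'_j$, $Y=X\times_S S'$, $Y_i=X\times_S S'_i$, $Y_{ij}=X\times_S S'_{ij}$, so that $\{Y_i\to Y\}$ and each $\{Y_{ij}\to Y_i\}_j$ are jointly surjective, hence effective epimorphic, in $\Sh_S$. The computational input I would record first is that each $J^n$ preserves all limits, being a right adjoint of $W_n$; hence $J^n(X\times_S S'')=J^n(X)\times_{J^n(S)}J^n(S'')$ for every $\dt_S$-sheaf $S''$ over $S'$, and therefore there is a canonical isomorphism $J^n_{S''}(X\times_S S'')\cong J^n_{S'}(Y)\times_{S'}S''$, natural in $S''$ and compatible with the truncations $\jp$ and the relative comultiplications $\nu_{m,n}$ of~(\ref{map:relative-trunc-jet-copleth}). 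Consequently a $\dt_{S'}$-structure on $Y$ is the same as a coaction $\alpha\colon Y\to J_{S'}(Y)=\lim_n J^n_{S'}(Y)$ of the relative jet comonad, and its restriction along $S'_i\to S'$ is exactly the induced $\dt_{S'_i}$-structure on $Y\times_{S'}S'_i=Y_i$, and similarly over the $Y_{ij}$.

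With this in hand the sheaf condition is a glueing argument. For injectivity: if two $\dt_{S'}$-structures $\alpha,\alpha'$ on $Y$ induce the same $\dt_{S'_i}$-structure on each $Y_i$, then for every $n$ the maps $\alpha_n,\alpha'_n\colon Y\to J^n_{S'}(Y)$ agree after restriction along the effective epimorphic family $\{Y_i\to Y\}$, hence agree. For existence: given $\dt_{S'_i}$-structures on the $Y_i$ agreeing over the $Y_{ij}$, the associated maps $Y_i\to J^n_{S'}(Y)\times_{S'}S'_i\to J^n_{S'}(Y)$ agree over the $Y_{ij}$ and hence descend along $\{Y_i\to Y\}$ to a unique $\alpha_n\colon Y\to J^n_{S'}(Y)$. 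That the $\alpha_n$ are compatible under $\jp$, that they lie over $S'$ in the required sense, and that they satisfy the counit and coassociativity identities of the comonad $J_{S'}$, are all equalities of morphisms out of $Y$; each holds after restriction to every $Y_i$, where it reduces to the corresponding assertion for the given $\dt_{S'_i}$-structure, and therefore holds on $Y$. So $(\alpha_n)_n$ is a $\dt_{S'}$-structure on $Y$ restricting to the prescribed data, unique by injectivity; this is the sheaf condition for $\dt_{X/S}$.

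The step I expect to cost the most care is the one hidden in the second paragraph: verifying that the base-change isomorphism $J^n_{S''}(X\times_S S'')\cong J^n_{S'}(Y)\times_{S'}S''$ is natural enough to intertwine the truncations $\jp$, the counits, and the iterated comultiplications $\nu_{m,n}$ --- so that ``$\alpha$ is a coaction of $J_{S'}$'' is genuinely a condition that may be tested after pulling the base back along a jointly surjective family. Given that, the remainder is formal: descent of morphisms in $\Sh_S$ together with the conservativity and (co)continuity of $U$.
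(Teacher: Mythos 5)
Your argument is correct, but it runs along a different track from the paper's. The paper's proof is a one-step comonadic descent argument: for an epimorphism $S''\to S'$ of $\dt_S$-sheaves with kernel pair $S'''$, it forms the coequaliser of $X\times_S S'''\rightrightarrows X\times_S S''$ in the category of $\dt_{S'}$-sheaves and uses the fact that the comonadic forgetful functor $\dtSh{S'}\to\Sh_{S'}$ creates colimits, so that the underlying sheaf of this coequaliser is $X\times_S S'$; the descended $\dt_{S'}$-structure and its uniqueness come out in one stroke, with no mention of jet spaces. You instead glue the coaction maps by hand: you identify the canonical-topology coverings via the conservativity and (co)continuity of the forgetful functor, prove the base-change compatibility $J^n_{S''}(X\times_S S'')\cong J^n_{S'}(X\times_S S')\times_{S'}S''$ (which holds because $J^n$, as a right adjoint of $W_n$, preserves limits, and $S''\to S'$ is a $\dt$-morphism), and then descend each $\alpha_n\:X\times_S S'\to J^n_{S'}(X\times_S S')$ by ordinary descent of morphisms in the topos $\Sh_S$, checking the counit, truncation and coassociativity identities after restriction. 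The key fact you use ($J^n$ commutes with the relevant base changes) does not appear in the paper's proof at all, while the paper's key fact (colimits of $\dt$-sheaves are computed on underlying sheaves) enters your argument only in identifying the coverings. What your route buys is explicitness --- it treats arbitrary covering families rather than a single epimorphism and makes visible why the restriction maps of $\dt_{X/S}$ are base change of coaction maps --- at the cost of the naturality bookkeeping for the base-change isomorphism with respect to $\jp$ and $\nu_{m,n}$, which you rightly flag and which is routine but genuinely needed; the comonadicity argument packages exactly that bookkeeping away, which is why the paper's proof is five lines.
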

\begin{proof} Let $S''\to S'$ be an epimorphism of $\dt_S$-sheaves and write $S'''=S''\times_{S'} S''$.
	Then
	\[
	\xymatrix{X\times_{S}S''' \ar@<-.5ex>[r]\ar@<.5ex>[r] & X\times_S S'' \ar[r]&X\times_S S'}
	\]
is a coequaliser diagram in the category of $S$-sheaves.

If $X\times_S S'''$ is given a $\dt_{S'''}$-structure and $X\times_S S''$ is given a $\dt_{S''}$-structure such that the two maps $X\times_S S'''\rightrightarrows X\times_S S''$ are $\dt_{S''}$-maps then the $S'$-sheaf underlying the coequaliser of the two maps $X\times_S S'''\rightrightarrows X\times_S S''$ in the category of $\dt_{S'}$-sheaves is $X\times_S S'$, because $\dtSh{S'}$ is comonadic over $\Sh_{S'}$. Hence,  $X\times_S S'$ admits a unique $\dt_{S'}$-structure making the map $X\times_S S''\to X\times_S S'$ a $\dt_{S'}$-morphism. This is equivalent to the functor $\dt_{X/S}$ being a sheaf for the canonical topology on $\dtSh{S}$. 
\end{proof}

\subsection{$\dt$-structures on $p$-adic sheaves}
Let us say that a sheaf $S\in\Sh$ is \emph{$p$-adic} if the structure map $S\to \Spec(\Z)$ 
factors through \[\Spf(\Z_p)=\colim _n\Spec(\Z/p^n)\subset \Spec(\Z).\] For example, an affine scheme 
$\Spec(R)$ is $p$-adic if and only if $p$ is nilpotent in $R$.

\begin{prop}\label{prop:ghost-nil-immersion} If $S$ is a $p$-adic scheme, then so is $W_n(S)$ and the natural maps \[W_{n}(S)\to W_{n+m}(S)\] are nilpotent immersions.
\end{prop}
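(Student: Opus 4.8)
The plan is to reduce to the affine case and then argue ring-theoretically. Since $W_n$ commutes with étale localisation and preserves schemes (theorems \ref{thm:w-etale} and the theorems on jet spaces), and since being $p$-adic and being a nilpotent immersion are both local properties, we may assume $S=\Spec(R)$ with $p$ nilpotent in $R$, say $p^N=0$. We must then show: (a) $p$ is nilpotent in $W_n(R)$, so that $W_n(S)=\Spec(W_n(R))$ is $p$-adic; and (b) the kernel of the truncation map $W_{n+m}(R)\to W_n(R)$, i.e.\ the Verschiebung ideal $V^{n+1}W_{n+m}(R)$, is a nilpotent ideal.

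For (a), the cleanest route is to use the Frobenius lift $\varphi$ on $W_n(R)$, which descends from $W_{n+1}$ but with a degree shift, so more carefully one works with the endomorphism $F$ on the full ring $W(R)$ (or uses the identity $\varphi(x)\equiv x^p \pmod p$ together with iteration). Concretely: on $W(R)$ one has $\varphi(x)=x^p+p\delta(x)$ and the relation $V\varphi = p = \varphi V$ together with $FV = p$ and $x^p \in \varphi(W(R)) + p W(R)$; iterating, $p^{?}$ lands inside $V^{n+1}W(R) + (\text{something nilpotent})$. A more elementary and self-contained argument: since $p^N = 0$ in $R$, every component of $p^{N}\cdot(1,0,0,\dots)=(0,0,\dots)$-type computation shows $p$ is topologically nilpotent; precisely, $p^{n+1}\in V^{?}$ is too crude, so instead show by induction on $n$ that $p^{cN}=0$ in $W_n(R)$ for a suitable constant $c=c(n)$, using the short exact sequence $0\to V^{n+1}W_n \to W_n(R)\to W_{n-1}(R)\to 0$ and the fact that $V^{n+1}W_n(R)$, as the top Verschiebung ideal, is an ideal whose square lands deeper (indeed $V^{i}W(R)\cdot V^{j}W(R)\subseteq V^{i+j}W(R)$), so it is nilpotent in the truncation. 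This also immediately gives (b): $V^{n+1}W_{n+m}(R)$ satisfies $(V^{n+1}W_{n+m}(R))^{k}\subseteq V^{k(n+1)}W_{n+m}(R)$, which vanishes once $k(n+1)>n+m$, so the kernel of $W_{n+m}(R)\to W_n(R)$ is nilpotent, i.e.\ the map is a nilpotent immersion.

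The one delicate point is that the excerpt has deliberately avoided defining the Verschiebung $V$ in the Buium--Joyal coordinates, so I would not want to lean on $V$ explicitly. The honest substitute is the short exact sequences $0\to V^{n+1}W(R)\to W(R)\to W_n(R)\to 0$, which are given, together with the multiplicativity $V^{i}W(R)\cdot V^{j}W(R)\subseteq V^{i+j}W(R)$; this last inclusion can either be cited (it is standard, and follows from $V(x)V(y)=V(xy)$ and $V(x)\varphi(y)=V(x\cdot\varphi(y))$, or proved by reduction to the $p$-torsion-free universal case using the ghost map, where it is transparent) or established directly from the ghost description. Granting that, the proof is a short induction. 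I expect the \emph{main obstacle} to be purely expository: packaging the nilpotence of the Verschiebung filtration in a way that is consistent with the paper's stated reluctance to introduce $V$, so I would phrase everything in terms of the filtration by the ideals $V^{i}W_{n+m}(R)$ (defined as kernels of truncations) and the single multiplicativity input, then remark that both $p$-nilpotence and the nilpotent-immersion claim drop out simultaneously from $(V^{i})^{k}\subseteq V^{ik}$ and $p\in V^{1}W(R)$ modulo the Teichmüller part (equivalently, $p$ maps to $0$ in $W_0(R/\mathrm{nil})$ after reducing, which forces $p$ to be nilpotent in each $W_n(R)$).
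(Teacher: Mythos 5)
Your reduction to the affine case and the overall shape (prove $p$ nilpotent in $W_n(R)$ and the truncation kernels nilpotent) match the paper, but the key algebraic input you lean on is false. The inclusion $V^iW(R)\cdot V^jW(R)\subseteq V^{i+j}W(R)$ does not hold for general $R$: the correct identities are $V(x)V(y)=pV(xy)$ (not $V(xy)$, as you wrote) and more generally $V^i(x)V^j(y)=p^{\min(i,j)}V^{\max(i,j)}(\cdot)$, so one only gets $V^iW\cdot V^jW\subseteq p^{\min(i,j)}V^{\max(i,j)}W$. Already in the $p$-torsion free case your ghost-map check fails: $V(1)^2=pV(1)$ has ghost vector $\langle 0,p^2,p^2,\dots\rangle$, whose second entry is nonzero, so it does not lie in $V^2W(\Z)$. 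Concretely for the truncations, in $W_1(\Z/p^2)$ the kernel of $W_1(\Z/p^2)\to W_0(\Z/p^2)$ contains $V(1)$ with $V(1)^2=V(p)\neq 0$, contradicting your claim that $(V^{n+1}W_{n+m}(R))^k\subseteq V^{k(n+1)}W_{n+m}(R)=0$ once $k(n+1)>n+m$ (take $n=0$, $m=1$, $k=2$). This breaks both halves of your argument as written: the asserted nilpotence of the truncation kernels in (b), and the induction in (a), which needs the kernel of $W_n(R)\to W_{n-1}(R)$ to be nilpotent \emph{before} you know $p$ is nilpotent in $W_n(R)$; with only the correct identity you get $I^2\subseteq p^nI$, which is circular. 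The closing remark that ``$p\in V^1W(R)$ modulo the Teichm\"uller part'' is also not an argument: $p\cdot 1$ has first component $p\neq 0$ in $R$ in general.

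The strategy can be repaired (e.g.\ using $V^n(x)V^n(y)=p^nV^n(xy)$ together with the observation that $p^NW(R)\subseteq VW(R)$ when $p^N=0$ in $R$, one shows the kernel of $W_n(R)\to W_{n-1}(R)$ is nilpotent and then inducts), but this is genuinely different from, and heavier than, what the paper does. The paper only uses $(\ker(W_{n+1}(R)\to W_n(R)))^2\subseteq pW_{n+1}(R)$ to reduce everything to the nilpotence of $p$ in $W_n(R)$, and proves the latter without any Verschiebung manipulations: since $p^{i+1}=0$ in $R$, the ring $W_n(R)$ is an algebra over $W_n(W_i(\F_p))$, and the comultiplication $\Delta\colon W_{n+i}(\F_p)\to W_n(W_i(\F_p))$ makes that ring an algebra over $W_{n+i}(\F_p)=\Z/p^{n+i+1}\Z$, where $p$ is visibly nilpotent. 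If you want to stay consistent with the paper's avoidance of $V$, that is the route to take.
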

\begin{proof} 	
Using theorem~\ref{theo:witt-et-fibre},
we may pass to an affine cover of $S$ and therefore assume that $S=\Spec(R)$, where $p$ is
nilpotent on $R$. So we are reduced to showing
that the kernels of the truncation maps $W_{n+m}(R)\to W_n(R)$ are nilpotent.
It is further enough to take $m=1$. Then the kernel $VW_{n}(R)$ of $W_{n+1}(R)\to W_n(R)$ satisfies
$(VW_n(R))^2\subset pW_n(R)$, and so it is enough to show that $p$ is nilpotent in $W_n(R)$. However, as $R$ is
a $\Z/p^{i+1}\bZ$-algebra for some $i$, and $\Z/p^{i+1}\bZ=W_{i}(\F_p)$ it follows that $W_n(R)$ is a
$W_{n}(W_{i}(\F_p))$-algebra. Therefore it is enough to show that $p$ is nilpotent in $W_{n}(W_{i}(\F_p))$.
But the comonad comultiplication map \[W_{n+i}(\F_p)\to W_n(W_{i}(\F_p))\] makes $W_n(W_{i}(\F_p))$ an
algebra over the ring $W_{n+i}(\F_p)=\bZ/p^{n+i+1}\bZ$, in which $p$ is indeed nilpotent. 
\end{proof}

For any sheaf $S$ let $\Et_S$ denote the category of relatively representable \'etale algebraic spaces over $S$.
If $S$ also has a $\dt$-structure write $\Et^{\dt}_{S}$ for the category of relatively representable 
\'etale $\dt_S$-sheaves.

\begin{prop}\label{prop:et-witt-equivalence} If $S$ is a $p$-adic ind-scheme, the functor 
	\[  
	\Et^{\dt}_{{W(S)}} \longmap \Et_S, \quad Z\mapsto Z\times_{W(S)} S
	\] 
is an equivalence of categories with quasi-inverse $W$.
\end{prop}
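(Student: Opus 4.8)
The plan is to exhibit the two functors $Z\mapsto Z\times_{W(S)}S$ and $X\mapsto W(X)$ as quasi-inverse equivalences by constructing the unit and counit and checking they are isomorphisms. First I would observe that the key geometric input is theorem~\ref{theo:witt-et-fibre}: for $X\to S$ representable and \'etale, the map $W_n(X)\to W_n(S)$ is representable and \'etale, and by part~(ii) the square relating $W_n(X)$ and $W_{n+m}(X)$ over $S$ and $W_{n+m}(S)$ is cartesian, so in the colimit $W(X)\to W(S)$ is representable and \'etale and $W(X)\times_{W(S)}S\cong X$ (taking $m\to\infty$ in the fibre-product description, using that $W_0(S)=S$). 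This already shows that $W$ lands in $\Et_{W(S)}$ and provides the counit $W(X)\times_{W(S)}S\isomto X$. Moreover, since $W$ is the monad on $\dtSh{S}$, the sheaf $W(X)$ carries a canonical $\dt_{W(S)}$-structure, so in fact $W$ lands in $\Et^{\dt}_{W(S)}$; this is where I use the comonadic/monadic formalism of section~\ref{sec:rel-lambda}.

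Next I would produce the unit. Given $Z\in\Et^{\dt}_{W(S)}$, set $X=Z\times_{W(S)}S\in\Et_S$; I must build a natural isomorphism $Z\isomto W(X)$ of $\dt_{W(S)}$-sheaves. Here the $\dt$-structure on $Z$ is essential: as recalled in section~\ref{sec:lambda-structure}, a $\dt_{W(S)}$-structure is equivalently an action of the monad $W$, i.e.\ a map $W(Z)\to Z$ over $W(S)$ satisfying the usual identities. Restricting such an action along appropriate maps, and using that the co-unit/projection $W(Z)\to Z\times_{W(S)}S = X$... more precisely: the $\dt_{W(S)}$-structure on $Z$ furnishes, via adjunction, a comparison map $Z\to W_{W(S)}(X)$; but since $W(S)$ already carries its canonical $\dt$-structure one checks the relative and absolute Witt functors agree here, so this is a map $Z\to W(X)$ over $W(S)$. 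One then verifies it is an isomorphism by working $\et$-locally on $S$ (using that both sides are \'etale and representable, and proposition~\ref{prop:ghost-nil-immersion}, so a map of \'etale $W(S)$-spaces inducing an isomorphism on the reduction $S$, hence on each $W_n(S)$ by the \'etale invariance under nilpotent immersions, is an isomorphism). Thus the counit composed with $W$ of the unit, and vice versa, are identities, giving the equivalence.

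The main obstacle, I expect, is the second step: showing the canonical comparison map $Z\to W(Z\times_{W(S)}S)$ is an isomorphism, rather than merely the formal existence of the two functors. The point is that a general \'etale $W(S)$-scheme is \emph{not} of the form $W(X)$ — this fails already without the $\dt$-structure, as $W$ is far from essentially surjective onto $\Et_{W(S)}$ — so one genuinely needs to use the action map $W(Z)\to Z$ to rigidify $Z$ along the pro-nilpotent tower $W(S)=\colim_n W_n(S)$. The cleanest route is probably: reduce to $S=\Spec R$ affine with $p$ nilpotent, hence to each finite level $W_n(R)$; on $W_n(R)$ the $\dt$-structure gives a descent datum for the (nonexistent) map $W_n(R)\to R$, and \'etale descent together with theorem~\ref{thm:w-etale} and theorem~\ref{theo:witt-et-fibre}(ii)--(iii) lets one descend $Z$ to an \'etale $R$-scheme, which must be $X$ by construction; naturality and compatibility across $n$ then give the global statement. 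The only real work is bookkeeping with the comonad comultiplication maps~\eqref{map:coplethysm} to see that the descent datum one extracts is genuinely the one coming from the $\dt$-structure.
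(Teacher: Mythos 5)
Your core strategy does work, and it runs on the same two inputs as the paper's proof (theorem~\ref{theo:witt-et-fibre} and proposition~\ref{prop:ghost-nil-immersion}), just packaged as an explicit unit/counit instead of the paper's factorisation $\Et^{\dt}_{W(S)}\hookrightarrow\Et_{W(S)}\to\Et_S$. But your closing diagnosis of where the difficulty lies rests on a false premise: in the $p$-adic setting $W$ \emph{is} essentially surjective onto $\Et_{W(S)}$, with no $\dt$-structure needed. By proposition~\ref{prop:ghost-nil-immersion} the maps $W_n(S)\to W_{n+m}(S)$ are nilpotent immersions and $W_0(S)=S$, so topological invariance of the \'etale site gives $\Et_{W(S)}\simeq\Et_{W_n(S)}\simeq\Et_S$; hence any \'etale $Z/W(S)$ restricting to $X$ over $S$ is already isomorphic to $W(X)$ as a $W(S)$-space. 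This is exactly how the paper argues: plain base change $\Et_{W(S)}\to\Et_S$ is an equivalence, $W$ is a right quasi-inverse of it by theorem~\ref{theo:witt-et-fibre}, and $W$ lands in $\Et^{\dt}_{W(S)}$. Consequently the ``descent along the nonexistent map $W_n(R)\to R$'' that you flag as the only real work is unnecessary (and is in any case only sketched, not carried out); the $\dt$-structure on $Z$ enters solely to make the comparison map a morphism of $\dt_{W(S)}$-sheaves, not to rigidify the underlying \'etale space.

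Two smaller corrections. First, the adjunction (with $W(X)$ the free $\dt_{W(S)}$-object on $X$, as used in lemma~\ref{lemm:lambda-hom-etale}) naturally produces a $\dt_{W(S)}$-map $W(X)\to Z$ from $X\to Z$, not a map $Z\to W(X)$ as you wrote; with the direction fixed, your middle paragraph --- comparison map from the adjunction, isomorphism checked via nilpotent invariance at each finite level, naturality giving the equivalence --- is precisely the right argument and already finishes the proof. Second, the reduction from an ind-scheme $S$ to the affine case deserves a word: the paper handles it by noting both categories are $2$-limits of the corresponding categories over the pieces $S_i$.
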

\begin{proof}
First, observe that we may assume $S$ is a $p$-adic scheme because if $S=\colim_i S_i$, the categories in
question are the $2$-limits of the categories $\Et_{S_i}$ and $\Et^{\dt}_{{W(S_i)}}$ respectively.

Now the functor in question factors:
	$$
	\Et^{\dt}_{W(S)} \hookrightarrow \Et_{W(S)} \to \Et_S.
	$$
The second functor is an equivalence because of the equivalences \[\Et_{W_n(S)}\isomto \Et_{S} \] 
for all $n\geq 0$, which follow from proposition~\ref{prop:ghost-nil-immersion}.
Further $W\:\Et_S\to \Et_{W(S)}$
is a right quasi-inverse of the functor $\Et_{W(S)} \to \Et_S$,
by theorem~\ref{theo:witt-et-fibre}. But $W$ has essential image in the
subcategory $\Et^{\dt}_{W(S)}$. Therefore $\Et^{\dt}_{W(S)} \hookrightarrow \Et_S$
is an equivalence and $W$ is a two-sided quasi-inverse.
\end{proof}

\begin{prop}\label{prop:fin-loc-lambda} Let $S=\Spec(A)$, where $A$ is a ring in which $p$ is nilpotent.
Then the category of finite locally free $\dt_{W(S)}$-schemes is equivalent to the category of finite locally 
free $\dt_{W(A)}$-algebras.
\end{prop}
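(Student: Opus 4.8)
The plan is to reduce to the statement \emph{without} $\dt$-structures and then transport it along the comonads whose coalgebras produce the $\dt$-objects on each side. The overall shape is parallel to that of proposition~\ref{prop:et-witt-equivalence}, but for finite locally free (rather than \'etale) schemes one cannot get away with invariance under nilpotent immersions and must instead identify the two comonads by hand.

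\textbf{Step 1} (the bare equivalence). A finite morphism is affine, so a finite locally free scheme over $W(S)=\colim_n\Spec(W_n(A))$ is the same as a system $X=\colim_n X_n$ with $X_n=\Spec(B_n)$, $B_n$ finite locally free over $W_n(A)$, and $X_n=X_{n+1}\times_{W_{n+1}(S)}W_n(S)$, i.e.\ $B_{n+1}\otimes_{W_{n+1}(A)}W_n(A)\isomto B_n$. I would send such an $X$ to $B:=\lim_n B_n$, a $W(A)$-algebra since $W(A)=\lim_n W_n(A)$, and send a finite locally free $W(A)$-algebra $B$ to $\colim_n\Spec(B\otimes_{W(A)}W_n(A))$. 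These are mutually quasi-inverse: writing a finite locally free $B$ as $eW(A)^N$ for an idempotent $e\in M_N(W(A))$, one gets $\lim_n(eW_n(A)^N)=eW(A)^N=B$ because $W(A)^N=\lim_n W_n(A)^N$, which gives essential surjectivity (and the multiplication passes to the limit because a finite projective module is dualisable); full faithfulness holds because each $\Spec(B_n)$ is quasi-compact, so a morphism $\colim_n\Spec(B_n)\to\colim_n\Spec(C_n)$ over $W(S)$ amounts to a compatible system of $W_n(A)$-algebra maps $C_n\to B_n$, i.e.\ a $W(A)$-algebra map $\lim_n C_n\to\lim_n B_n$.

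\textbf{Step 2} (matching the comonads). On finite locally free $W(S)$-schemes the comonad in play is the relative jet space $X\mapsto J_{W(S)}(X)=J(X)\times_{J(W(S))}W(S)$, whose coalgebras are the $\dt_{W(S)}$-schemes; on finite locally free $W(A)$-algebras it is the relative Witt comonad $B\mapsto W(B)\times_{W(W(A))}W(A)$, built from the $\dt$-structure $W(A)\to W(W(A))$, whose coalgebras are exactly the $\dt_{W(A)}$-algebras. I claim the equivalence of Step 1 intertwines them. To see that $J_{W(S)}(X)$ is again finite locally free over $W(S)$ and to compute it, I would argue \'etale-locally on $W(S)$: since $p$ is nilpotent on $A$, proposition~\ref{prop:ghost-nil-immersion} gives $\Spec(W_n(S))\cong\Spec(S)$ for all $n$, so an \'etale cover of $S$ trivialising $X$ lifts, by theorem~\ref{thm:w-etale}, to a compatible \'etale cover of all the $W_n(S)$ and hence of $W(S)$; over it $X$ is finite free, and there $J^n_{W(S)}(X)$ can be written out explicitly, is finite free of the same rank, and matches the degree-$n$ piece of the relative Witt comonad, the comparison being governed by the index shifts in the comultiplication maps~(\ref{eq-truncated-comonad}), (\ref{map:coplethysm-on-jet-spaces}). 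Gluing and passing to the limit in $n$ identifies $J_{W(S)}$, restricted to finite locally free schemes, with the relative Witt comonad on finite locally free $W(A)$-algebras, and the induced equivalence on categories of coalgebras is the claim.

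\textbf{Main obstacle.} The work is concentrated in Step 2: showing that $J_{W(S)}$ preserves finite local freeness and is computed by the relative Witt comonad. The reduction to the free case is what makes this feasible, and it is crucial that it is carried out over the ind-scheme $W(S)$ rather than over $\Spec(W(A))$, where $p$ need not be nilpotent and $\Spec$ is no longer a nilpotent thickening of $\Spec(A)$; but even in the free case one must still perform the explicit comparison of the jet functor applied to a finite (non-polynomial) algebra with the truncated Witt construction, keeping careful track of the degree shifts relating $W_m$, $J^n$ and the comonad maps. Everything else---Step 1, and the passage from an equivalence of comonads to an equivalence of the categories of coalgebras---is formal.
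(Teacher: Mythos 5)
Your Step 1 is fine and is exactly the reduction the paper makes (and leaves to the reader): the bare equivalence between finite locally free $W(S)$-schemes and finite locally free $W(A)$-algebras via $T\mapsto \lim_n B_n$ and $B\mapsto \colim_n\Spec(W_n(A)\otimes_{W(A)}B)$, using proposition~\ref{prop:ghost-nil-immersion}.

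Step 2, however, has a genuine gap: the comonads you want to compare do not preserve the finite locally free subcategories, and in particular your central claim that $J^n_{W(S)}(X)$ is ``finite free of the same rank'' in the free case is false. Relative jet spaces of finite schemes pick up derivative coordinates. Concretely, take $A=\F_p$, so $W(S)=\Spf(\Z_p)$, and let $X$ correspond to the system $B_k=(\Z/p^{k+1})[x]/(x^2)$, finite free of rank $2$. Computing $J^1_{W(S)}(X)$ over the level-$0$ piece, a point with values in an $\F_p$-algebra $R$ is a square-zero element $(a_0,a_1)\in W_1(R)$, and the condition $(a_0,a_1)^2=(a_0^2,\,2a_0^pa_1)=0$ leaves $a_1$ a free parameter (for $p$ odd it reduces to $a_0^2=0$); so this fibre is $\Spec\,\F_p[a_0,a_1]/(a_0^2)$, which is not finite at all, let alone of rank $2$. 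The same phenomenon appears algebraically: for $B=C[x]/(x^2)$ the relative level-$1$ jet algebra is $C[x,y]/(x^2,\,py^2)$, infinite over $C$ whenever $p$ is a nonzero nilpotent. On the algebra side your formula $B\mapsto W(B)\times_{W(W(A))}W(A)$ also does not typecheck (there is no natural map $W(B)\to W(W(A))$; the relative cofree object is simply $W(B)$ viewed as a $W(A)$-algebra via the coaction $W(A)\to W(W(A))\to W(B)$), and in any case $W(B)$ is never finite locally free. So the strategy ``identify the two comonads on the finite locally free objects and pass to coalgebras'' cannot be carried out as stated.

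The repair is to avoid asking any (co)monad to preserve these subcategories, which is what the paper does: describe a $\dt_{W(S)}$-structure on $T$ through the monad $W$, i.e.\ as a $W(S)$-morphism $W(T)\to T$, equivalently a compatible family of morphisms $W_n(T_m)\to T_{n+m}$ (each source is affine and quasi-compact, so maps to the ind-scheme $T$ factor, and the degree shift comes from $\Delta\:W_{n+m}(A)\to W_n(W_m(A))$); and describe a $\dt_{W(A)}$-structure on $B$ through the coaction $B\to W(B)$, equivalently a compatible family $B_{n+m}\to W_n(B_m)$. These two families of data correspond to each other level by level under $\Spec$, since $W_n(T_m)=\Spec(W_n(B_m))$ and $T_{n+m}=\Spec(B_{n+m})$, and the coassociativity/counit conditions match. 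No finiteness of jet spaces or of $W(B)$ is needed anywhere.
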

\begin{proof} 
As $W(A)=\lim_n W_n(A)$ and the transition maps are surjective with nilpotent kernels,
by proposition~\ref{prop:ghost-nil-immersion},
it follows that the categories of finite locally free $W(S)$-schemes and
finite locally free $W(A)$-schemes are equivalent, with the $W(A)$-scheme $\Spec(B)$ corresponding to the
$W(S)$-scheme $\colim_n \Spec(W_n(A)\otimes_{W(A)}B)$ and the $W(S)$-scheme $T$ corresponding to
the $W(A)$-scheme $\Spec(\lim_nB_n)$, where $\Spec(B_n)=T\times_{W(S)}W_n(S)$. (We leave the argument to the 
reader.)

Now, to give a $\dt_{W(A)}$-structure on $B$ is equivalent to giving a $W(A)$-morphism $B\to W(B)$ satisfying
certain properties. However, this is also equivalent to giving a family of morphisms 
	\[
	B_{m+n}\to W_n(B_m)
	\] 
for $n, m\geq 0$ which in turn is equivalent to giving morphisms 
	\[
	W_n(T_m)\to T_{n+m}
	\] 
for $n, m\geq 0$ whose colimit defines a $W(S)$-morphism 
	\[
	W(T)\to T.
	\] 
Tracing through these equivalences, we
see that to give a $\dt_S$-structure on $T$ is equivalent to giving a $\dt_{W(A)}$-structure on $B$.

\end{proof}

\begin{lemm}\label{lemm:lambda-hom-etale} 
	Let $S$ be a $p$-adic sheaf and let $\wt{G}$ and $\wt{H}$ be two 
	$\dt_{W(S)}$-groups over $W(S)$. If $\wt{H}$ is relatively representable and \'etale over $W(S)$, then the 
	natural map  \[\Hom_{\dt_{W(S)}}(\wt{H}, \wt{G})\to \Hom_S(H, G)\] from 
	$\delta_{W(S)}$-homomorphisms to $S$-homomorphisms is bijective.
\end{lemm}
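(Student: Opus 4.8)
The plan is to reduce to the case $S=\Spec(A)$ with $p$ nilpotent in $A$, to replace $\wt H$ by $W(H)$ using Proposition~\ref{prop:et-witt-equivalence}, and then to read off the bijection from the adjunction expressing that $\dt$-sheaves are the algebras for the monad $W$, together with the identity $W(H\times_S H)=W(H)\times_{W(S)}W(H)$ coming from \'etaleness.

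Write $H=\wt H\times_{W(S)}S$ and $G=\wt G\times_{W(S)}S$, so the map in the statement is restriction along the first ghost component $S\to W(S)$. The formation of $\wt H,\wt G,H,G$ commutes with base change in $S$ (Theorem~\ref{theo:witt-et-fibre}), and the assignments $S'\mapsto\Hom_{\dt_{W(S')}}(\wt H_{S'},\wt G_{S'})$ and $S'\mapsto\Hom_{S'}(H_{S'},G_{S'})$ are sheaves in $S'$: the latter is an internal Hom, and for the former a $\dt_{W(S)}$-homomorphism is in particular a morphism over $W(S)$, so it descends along the \'etale cover $\{W(S_i)\to W(S)\}$ attached to an \'etale cover $\{S_i\to S\}$ via Theorem~\ref{theo:witt-et-fibre}(i) and Proposition~\ref{prop:ghost-nil-immersion}, the $\dt$- and group-compatibilities being \'etale local (Lemma~\ref{lemm:lambda-structures-etale}). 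As the map in question is the induced map of sheaves, it is enough to check bijectivity after pulling back to an affine \'etale cover of $S$, so we may assume $S=\Spec(A)$ with $p$ nilpotent in $A$. Since $\wt H$ is relatively representable and \'etale over $W(S)$, Proposition~\ref{prop:et-witt-equivalence} gives a canonical isomorphism $\wt H\isomto W(H)$; the equivalence $Z\mapsto Z\times_{W(S)}S$ and its quasi-inverse $W$ preserve fibre products of \'etale objects over their bases (for $W$, by Theorem~\ref{theo:witt-et-fibre}(iii)), hence preserve group objects and homomorphisms, so $H$ is an \'etale group over $S$ and $\wt H\isomto W(H)$ is an isomorphism of $\dt_{W(S)}$-groups. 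It remains to match the group homomorphisms $W(H)\to\wt G$ over $W(S)$ with the $S$-group homomorphisms $H\to G$.

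First ignore the group structures. Evaluating the adjunction between $W$ and the forgetful functor $\dtSh{}\to\Sh$ at the unit $\eta_H\colon H\to W(H)$ (which is the first ghost component) bijects $\dt$-morphisms $f\colon W(H)\to\wt G$ with sheaf morphisms $g:=f\circ\eta_H\colon H\to\wt G$. Naturality of the unit gives $\pi_{W(H)}\circ\eta_H=\eta_S\circ\pi_H$, so $f$ lies over $W(S)$ if and only if $\pi_{\wt G}\circ g=\eta_S\circ\pi_H$, i.e.\ if and only if $g$ factors through $G=\wt G\times_{W(S)}S$ by a morphism over $S$; and under the identification $W(H)\times_{W(S)}S\isomto H$ of Theorem~\ref{theo:witt-et-fibre}(ii), for which $\eta_H$ is the tautological section, the resulting bijection is $f\mapsto f\times_{W(S)}S$. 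Next, identifying $W(H\times_S H)=W(H)\times_{W(S)}W(H)$ by Theorem~\ref{theo:witt-et-fibre}(iii) and using naturality of the unit again, $\eta_H$ is itself a homomorphism over $\eta_S$; hence if $f$ is a homomorphism, so is $g=f\circ\eta_H$. Conversely, if $g$ is a homomorphism, then $f\circ m_{W(H)}$ and $m_{\wt G}\circ(f\times f)$ are two $\dt_{W(S)}$-morphisms $W(H\times_S H)\to\wt G$ which agree after precomposing with $\eta_{H\times_S H}$ (both become $g\circ m_H=m_G\circ(g\times g)$), hence coincide by the uniqueness in the adjunction applied to $H\times_S H$ in place of $H$; so $f$ preserves multiplication and is therefore a homomorphism. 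This, together with $\wt H\isomto W(H)$, proves the lemma.

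I expect the crux to be the two appeals to Theorem~\ref{theo:witt-et-fibre}(iii): \'etaleness of $\wt H$ is exactly what allows both the replacement $\wt H\cong W(H)$ and the identification $W(H\times_S H)\cong W(H)\times_{W(S)}W(H)$ --- without it $W$ does not even carry the group $H$ to a group --- and the remaining care lies in keeping the given structure on $\wt H$ and the transported structure on $W(H)$ compatible throughout. The affine reduction and the bare adjunction are routine.
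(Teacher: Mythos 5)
Your proof is correct and follows essentially the same route as the paper's: identify $\wt{H}$ with $W(H)$ via proposition~\ref{prop:et-witt-equivalence}, use the adjunction for the monad $W$ to match maps $H\to G$ with $\dt_{W(S)}$-maps $W(H)\to\wt{G}$, and use the compatibility of $W$ with \'etale fibre products to transfer the homomorphism condition. The only difference is that you spell out the base-point and group-law bookkeeping explicitly and add an initial reduction to affine $S$, which the paper omits but which is harmless (and even tidies up the appeal to proposition~\ref{prop:et-witt-equivalence}, stated there for ind-schemes).
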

\begin{proof}
	Any map $f\:H\to G=\wt{G}\times_{W(S)}S$ lifts by adjunction to a unique
$\dt_{W(S)}$-map $\wt{f}\:W(H)\to \wt{G}$. However, $\wt{H}=W(H)$ and the functor $W$ commutes with \'etale fibre
products (by proposition~\ref{prop:et-witt-equivalence}), from which it follows that $\wt{f}\: \wt{H}\to \wt{G}$ is a homomorphism
if and only if $f\: H\to G$ is a homomorphism. 
\end{proof}

\subsection{The sufficiency of $J^1$ for ind-affine sheaves}
Putting a $\dt$-ring structure on a ring is equivalent to putting an action of the comonad $W$ on it.
This is true nearly by definition, namely our definition of $W$. The purpose of this section is to prove the analogous result for ind-affine sheaves, where it appears to require proving something.

We will also work in the relative setting. So fix a $\dt$-sheaf $S$. Let $\dtind{S}$ denote the full
subcategory of $\dtSh{S}$ consisting of the objects which are ind-affine. Let $\dtoind$ denote the category of
pairs $(X,\beta)$, where $X$ is an ind-affine sheaf over $S$ and $\beta\:X\to J^1_S(X)$ is a section of the
projection $\jp\:J^1_S(X)\to X$ and where a morphism $(X,\beta)\to(X',\beta')$ is a morphism $f\:X\to X'$ of
$\Sh_S$ compatible with the sections in the evident sense: $\beta'\circ f = J^1_S(f)\circ \beta$.

Consider the functor 
\begin{equation}
\label{map:J1-sufficient}
	\dtind{S} \longmap \dtoind, \quad 	X \mapsto (X,\alpha_1)	
\end{equation}
where $\alpha_1$ is the composition
		$$
		\alpha_1\:X\longlabelmap{\alpha} J_S(X)\longlabelmap{\jp} J^1_S(X)
		$$
and where $\alpha$ is the structure map of the $\dt_S$-structure on $X$.
	
\begin{prop}\label{pro:W1-enough}
 The functor (\ref{map:J1-sufficient}) is an equivalence of categories. 
\end{prop}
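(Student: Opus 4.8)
The plan is to construct a quasi-inverse by promoting a single section $\beta\colon X\to J^1_S(X)$ to a full comonad coaction $\alpha\colon X\to J_S(X)$, i.e.\ a $\dt_S$-structure, and then checking that this construction is inverse to the functor~(\ref{map:J1-sufficient}) on both sides. The key observation is that since $X$ is ind-affine, say $X=\colim_i \Spec(R_i)$ relatively over $S$ (or after reducing to $S$ affine, $X=\colim_i\Spec(R_i)$ with each $R_i$ a ring), a morphism $X\to J^1_S(X)$ over $X$ is, by the adjunction between $W$ and $J$ and by ind-affineness, the same data as a compatible family of $W_1$-coactions at the level of the coordinate rings, \emph{truncated to level $1$}. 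So the content of the proposition is really a statement about rings: a ring $R$ together with a set map $\delta\colon R\to R$ satisfying the three Joyal--Buium identities (1)--(3) is the same as a $W$-coalgebra structure on $R$. That is true essentially by the definition of $W$ given in~\S1.2: the coaction $R\to W(R)$ is $x\mapsto(x,\delta(x),\delta^{\circ 2}(x),\dots)$, the counit and coassociativity axioms translate into exactly the iterated Leibniz rules that were used to \emph{define} the ring structure on $W(R)$ and the comultiplication $\Delta$, and the Leibniz rules (1)--(3) for $\delta$ are precisely what makes $x\mapsto(x,\delta(x),\dots)$ a ring homomorphism rather than merely a set map.

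Concretely, here are the steps. First, reduce to the case $S$ affine, and then, using ind-affineness of $X$, reduce the whole statement to the corresponding statement about (ind-)rings; one must be a little careful that morphisms in $\dtoind$ are morphisms of $\Sh_S$ compatible with $\beta$, not a priori algebra maps, but for ind-affine sheaves a morphism is automatically induced by a (pro-)system of ring maps, so this is harmless. Second, unwind the definition of $J^1_S$ and the relative comultiplication $\nu_{m,n}$ of~(\ref{map:relative-trunc-jet-copleth}): a section $\beta\colon X\to J^1_S(X)$ of $\jp$ corresponds, dually, to a map $\delta$ on coordinate rings lifting the identity, i.e.\ exactly the data of a $\dt$-ring structure \emph{in the sense of~\S1.1}, because the identities (1)--(3) are precisely the conditions for the induced map $R\to W_1(R)=R\times R$, $x\mapsto(x,\delta(x))$, to be a ring homomorphism, and no further conditions are imposed by a bare section of $\jp$. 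Third, from such a $\delta$ build $\alpha\colon X\to J_S(X)$ by iteration, dually to $x\mapsto(x,\delta(x),\delta^{\circ 2}(x),\dots)$, and verify the two comonad-coaction diagrams; here coassociativity is the assertion that $\nu\circ\alpha=J(\alpha)\circ\alpha$, which on coordinate rings is the identity $\Delta(x,\delta(x),\dots)=\big((x,\delta(x),\dots),(\delta(x),\delta^{\circ 2}(x),\dots),\dots\big)$ established in~\S1.2, and the counit axiom is immediate. Fourth, check the two round-trips: starting from a $\dt_S$-structure, passing to $\alpha_1$ and back recovers it because a $\dt_S$-structure is already determined by its level-$1$ truncation together with the coassociativity constraint (the higher components of $\alpha$ are forced to be the iterates of $\delta$); and starting from $(X,\beta)$, the reconstructed $\alpha$ has $\alpha_1=\beta$ by construction. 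Functoriality in $X$ on both sides is then routine.

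The main obstacle I expect is the bookkeeping in Step 2--3, namely showing cleanly that a bare section $\beta$ of $\jp\colon J^1_S(X)\to X$ carries \emph{exactly} the information of a $\dt$-ring-type $\delta$ satisfying (1)--(3), and not something weaker: one has to verify that the ring structure on $W_1$ forces the Leibniz identities and that, conversely, those identities suffice, and then that iterating a level-$1$ datum through the $\nu_{m,n}$ automatically produces a \emph{coherent} tower $\alpha\colon X\to J_S(X)=\lim_n J^n_S(X)$ rather than merely a collection of unrelated truncations. The point where something genuinely has to be proved (as the section heading warns) is the coassociativity diagram: one must show that the tower built by iterating $\delta$ satisfies $\nu\circ\alpha = J(\alpha)\circ\alpha$, which amounts to the compatibility of the comultiplication maps $\nu_{m,n}$ of~(\ref{map:coplethysm-on-jet-spaces}) with the iterates of $\delta$ — and this is exactly the dual of the explicit coplethysm formula~(\ref{map:coplethysm}) recorded in~\S1.2, so the work is to transport that ring-level identity across the adjunction and the ind-affine colimit. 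Everything else — the reduction to $S$ affine, the translation to (ind-)rings, the counit axiom, and functoriality — is formal.
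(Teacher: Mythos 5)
Your Steps 2--3 are where the argument breaks. You claim that, after passing to coordinate rings, a bare section $\beta$ of $\jp\colon J^1_S(X)\to X$ is literally a $\dt$-ring structure, i.e.\ an operator $\delta\colon R\to R$ satisfying (1)--(3), and you then build the full coaction by iterating $\delta^{\circ n}$ and invoking the explicit formula for $\Delta$. That is correct when $X$ is affine --- but in the affine case the proposition is, as the paper itself notes, true essentially by the definition of $W$, and the statement is only interesting for genuinely ind-affine $X$ (formal schemes, $W(S)$ itself). There your identification fails: writing $X=\colim_i\Spec(R_i)$, the section $\beta$ only provides, for each $i$, some $j\geq i$ and a ring homomorphism $R_j\to W_1(R_i)$ lifting the transition map $R_j\to R_i$; there is no endomorphism $\delta$ of any single $R_i$, so ``$\delta^{\circ n}$'' does not parse. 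To produce the level-$n$ data you must compose along the pro-system, verify the iterated Leibniz identities in this relative setting so that the resulting maps $R_{j}\to W_n(R_i)$ are ring homomorphisms compatible with the $\dt$-structure of the base, check independence of all the index choices, and show a full coaction is \emph{forced} (uniqueness). None of this is delivered by ``transporting the ring-level identity across the adjunction and the ind-affine colimit'', and it is exactly the content the paper isolates: the equaliser statements (lemma~\ref{lem-witt-induction} and proposition~\ref{pro:jet-equalizer}), namely that $W_{n+1}(R)$ is the equaliser of $W_nW_1(R)\rightrightarrows W_{n-1}W_1(R)$, which allow an induction showing that the iterates $\beta_n\colon X\to (J_S^1)^{\circ n}(X)$ factor, necessarily uniquely, through the monomorphism $J_S^n(X)\to (J_S^1)^{\circ n}(X)$; ind-affineness enters only through the commutation of filtered colimits with finite limits on points. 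Your sketch locates the remaining work in the coassociativity of an already-constructed tower, whereas the real issue is the existence and canonicity of the tower itself in the ind-affine case; that missing device (the equaliser lemma or an equivalent) is the gap.

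A secondary problem is your opening reduction to $S$ affine. The base $S$ is an arbitrary $\dt$-sheaf and the objects $X$ of $\dtind{S}$ are ind-affine but not assumed relatively affine, or even relatively representable, over $S$; so neither category obviously localises over an affine cover of $S$, and $X\times_S S'$ need not be ind-affine (fibre products of affine schemes over a general sheaf $S$ need not be affine). The paper's proof never makes this reduction: it checks the equaliser property on $R$-points valued in affine $S$-schemes, which requires no hypothesis on $S$ at all.
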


One might say that $W$, viewed as a monad on ind-affine sheaves, is freely generated by the pointed functor
$W_1$. Before we prove it, we will need some preliminary results.

\begin{lemm}
	\label{lem-witt-induction}
	For any $n\geq 1$ and any ring $R$, the diagram
		$$
		\displaylabelfork{W_{n+1}(R)}{\Delta}{W_nW_1(R)}{\trunc}{\Delta\circ W_n(\trunc)}{W_{n-1}W_1(R)}
		$$
	is an equaliser diagram.
\end{lemm}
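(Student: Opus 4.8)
\textit{Proof proposal.}

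\smallskip

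The plan is to reduce to a statement about underlying sets and then compute in the Buium--Joyal coordinates. Since an equaliser of ring homomorphisms is computed on underlying sets, it suffices to prove the displayed diagram is an equaliser of sets. The preliminary fact I would record is that $W_m$ acts componentwise on ring homomorphisms in these coordinates: for $f\colon A\to B$, the map $W_m(f)$ sends $(a_0,\dots,a_m)$ to $(f(a_0),\dots,f(a_m))$. This is immediate from the universal property of $W$ recalled in~\S\ref{sec:lambda-structure}, since $W(f)$ is a $\dt$-ring homomorphism lying over $f$ and $\dt$-ring homomorphisms commute with the shift $\dt$.

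Granting this, I would spell out the three relevant maps using~(\ref{map:coplethysm}). The comultiplication $\Delta\colon W_{n+1}(R)\to W_nW_1(R)$ sends $(x_0,\dots,x_{n+1})$ to $\big((x_0,x_1),(x_1,x_2),\dots,(x_n,x_{n+1})\big)$; the outer truncation $\trunc\colon W_nW_1(R)\to W_{n-1}W_1(R)$ drops the last length-two block; and $\Delta\circ W_n(\trunc)$ sends $(\alpha^{(0)},\dots,\alpha^{(n)})$, with $\alpha^{(i)}=(a^{(i)}_0,a^{(i)}_1)\in W_1(R)$, first to $(a^{(0)}_0,\dots,a^{(n)}_0)\in W_n(R)$ and then (applying the instance $\Delta\colon W_n(R)\to W_{n-1}W_1(R)$) to $\big((a^{(0)}_0,a^{(1)}_0),\dots,(a^{(n-1)}_0,a^{(n)}_0)\big)$. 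A direct check shows both parallel maps agree after precomposition with $\Delta$ (both give $\big((x_0,x_1),\dots,(x_{n-1},x_n)\big)$), so the diagram is a genuine fork. It is then visible that an element $(\alpha^{(0)},\dots,\alpha^{(n)})$ is equalised precisely when $a^{(i)}_1=a^{(i+1)}_0$ for $0\le i\le n-1$, and that, writing $x_i=a^{(i)}_0$ for $i\le n$ and $x_{n+1}=a^{(n)}_1$, such tuples are exactly the image of the injective map $\Delta\colon W_{n+1}(R)\to W_nW_1(R)$. Hence $\Delta$ is a bijection onto the set-theoretic equaliser, which finishes the argument.

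I do not anticipate a genuine obstacle here. The only points requiring attention are the bookkeeping around the two instances of $\Delta$, which occur at lengths $n+1$ and $n$ and so carry different degree shifts (one must verify the stated pair of maps really is a fork), and the componentwise description of $W_m$ on morphisms; both are dealt with as above, and the rest is a finite coordinate check. The degenerate case $n=1$, where $W_{n-1}W_1(R)=W_1(R)$ and the second instance of $\Delta$ is the identity, is covered by the same computation.
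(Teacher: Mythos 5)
Your proposal is correct and follows essentially the same route as the paper: a direct computation in Buium--Joyal components showing that the elements equalised by $\trunc$ and $\Delta\circ W_n(\trunc)$ are exactly those of the shifted form, i.e.\ the image of the injective map $\Delta\colon W_{n+1}(R)\to W_nW_1(R)$. The extra care you take with the fork condition and the componentwise action of $W_n$ on morphisms is fine but not a different argument.
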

\begin{proof}
	This is immediate in Buium--Joyal components. Consider a Witt vector
		$$
		z=\big((x_{0},y_{0}),(x_1,y_1),\dots,(x_{n},y_{n})\big)\in W_nW_1(R).
		$$
	Then we have
		$$
		\trunc(z) = \big((x_{0},y_{0}),(x_{1},y_{1}),\dots,(x_{n-1},y_{n-1})\big)
		$$
	and
		$$
		\Delta\circ W_n(\trunc)(z) = \Delta(x_{0},\dots,x_{n}) 
			= \big((x_{0},x_{1}),(x_{1},x_{2}), \dots, (x_{n-1},x_{n})\big).
		$$
	These two Witt vectors are equal if and only if
		$$
		y_{0}=x_{1},\quad y_{1}=x_{2}, \quad \dots, \quad y_{n-1} = x_{n}.
		$$
	But by the explicit description of $\Delta\:W_{n+1}(R)\to W_nW_1(R)$ given in (\ref{map:coplethysm}),
	this is precisely the condition for $z$ to lie in the image of $\Delta$ and, second, $\Delta$ is 
	a monomorphism. 
\end{proof}

\begin{rema}
The map $\Delta\:W_{n+1}(R)\to W_nW_1(R)$ even has a functorial retraction. In ghost components, it
is given by 
		$$
		\big\langle \langle a_0,b_0\rangle,\dots,\langle a_n,b_n\rangle \big\rangle \mapsto
		\langle a_0,a_1,\dots,a_{n-1},a_n, b_n\rangle.
		$$	
\end{rema}

\begin{prop}\label{pro:jet-equalizer}
	If $X$ is ind-affine, the diagram 
		$$
		\displaylabelfork{J_S^{n+1}(X)}{\nu_{1,n}}{J_S^1 J_S^n(X)}{J_S^1(\jp)}{\nu_{1,n-1}\circ\jp}{J_S^1 J_S^{n-1}(X)}
		$$
	is an equaliser diagram.
\end{prop}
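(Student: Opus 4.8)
The statement to prove is that for $X$ ind-affine, the fork
$$
J_S^{n+1}(X)\longrightarrow J_S^1 J_S^n(X)\rightrightarrows J_S^1 J_S^{n-1}(X)
$$
(with the maps $\nu_{1,n}$, $J_S^1(\jp)$, and $\nu_{1,n-1}\circ\jp$) is an equaliser. Let me think about how to approach this.

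The natural strategy is to reduce to the absolute case and then to the affine case, where it becomes Lemma~\ref{lem-witt-induction}.

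First, reduce to the absolute setting. By definition $J_S^k(X) = J^k(X)\times_{J^k(S)} S$, and the maps $\nu_{1,n}$, $u$ in the relative setting are induced by those in the absolute setting. Since $S$ itself carries a $\delta$-structure, the fork for $X$ over $S$ sits over (via fibre product with $S$) the corresponding absolute fork applied to $S$ — which, since $S$ has a $\delta$-structure, is a *split* equaliser (the $\delta$-structure gives a section). Forming a fibre product of an equaliser diagram with $S$ over a split equaliser should again be an equaliser. Concretely: a point of $J_S^1 J_S^n(X)$ equalising the two maps is the same as a point of $J^1 J^n(X)$ equalising the two absolute maps, together with compatible data on the $S$-side which, because $S$ has a $\delta$-structure, is forced. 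So it suffices to prove the absolute statement: for $X$ ind-affine, $J^{n+1}(X)\to J^1 J^n(X)\rightrightarrows J^1 J^{n-1}(X)$ is an equaliser.

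Second, reduce to the affine case by evaluating on points. A diagram of sheaves is an equaliser iff it is an equaliser after evaluation at every affine scheme $T = \Spec C$. Now $J^k(X)(T) = X(W_k(T)) = X(\Spec W_k(C))$, and writing $X = \colim_i \Spec R_i$ as a filtered colimit of affines (using ind-affineness), $X(\Spec W_k(C)) = \colim_i \Hom(R_i, W_k(C))$. The maps in the fork are, on $T$-points, induced by the ring maps $\Delta$ and $\trunc$ of Lemma~\ref{lem-witt-induction} applied to $C$. So I must check that
$$
\colim_i \Hom(R_i, W_{n+1}(C)) \longrightarrow \colim_i \Hom(R_i, W_n W_1(C)) \rightrightarrows \colim_i \Hom(R_i, W_{n-1} W_1(C))
$$
is an equaliser. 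Since filtered colimits of sets commute with finite limits, it is enough to show that for each fixed $i$,
$$
\Hom(R_i, W_{n+1}(C)) \longrightarrow \Hom(R_i, W_n W_1(C)) \rightrightarrows \Hom(R_i, W_{n-1} W_1(C))
$$
is an equaliser. But this is exactly $\Hom(R_i, -)$ applied to the equaliser diagram of Lemma~\ref{lem-witt-induction} (with $R = C$), and $\Hom(R_i, -)$ preserves limits. So it is an equaliser, and we are done.

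The step I expect to require the most care is the first reduction, from the relative to the absolute case: one has to check that the relative comultiplication and truncation maps really are the base changes (along $S \to J^k(S)$, etc.) of the absolute ones in a way that is compatible with the whole fork, and that fibre product over the split equaliser coming from the $\delta$-structure on $S$ preserves the equaliser property. This is formal but needs the identification of $\nu_{1,n}$ in the relative setting as stated after (\ref{map:relative-trunc-jet-copleth}). Everything after that — passing to $T$-points, using ind-affineness to write $X$ as a filtered colimit of affines, commuting the filtered colimit past the finite limit, and invoking Lemma~\ref{lem-witt-induction} together with left-exactness of $\Hom(R_i,-)$ — is routine.
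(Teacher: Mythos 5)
Your second and third steps (evaluating on points, writing $X$ as a filtered colimit of affines, commuting the filtered colimit past the finite limit, and invoking lemma~\ref{lem-witt-induction}) are exactly the paper's argument. The problem is your first step, the reduction from the relative to the absolute fork, and the flaw is precisely at the place you flagged. The justification you give --- that the fork for $S$ is a ``split equaliser'' because ``the $\dt$-structure gives a section'' --- is not correct: the coaction maps $S\to J^k(S)$ are sections of the projections $\jp$, not of $\nu_{1,n}$ nor of the parallel pair, and since $S$ is an arbitrary $\dt$-sheaf (it is \emph{not} assumed ind-affine in this proposition) the fork $J^{n+1}(S)\to J^1J^n(S)\rightrightarrows J^1J^{n-1}(S)$ need not be an equaliser at all. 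Concretely, granting the absolute statement for $X$, a point of the equaliser of the relative pair is a pair $(\eta,s)$ with $\eta\in J^{n+1}(X)(T)$, $s\in S(T)$ and $\nu_{1,n}(\bar\eta)=\nu_{1,n}(\alpha_{n+1}(s))$ in $J^1J^n(S)(T)$, where $\bar\eta$ is the image of $\eta$ in $J^{n+1}(S)$; a point of $J^{n+1}_S(X)$ requires the stronger condition $\bar\eta=\alpha_{n+1}(s)$. So your reduction needs $\nu_{1,n}\:J^{n+1}(S)\to J^1J^n(S)$ to be a monomorphism for an arbitrary $\dt$-sheaf $S$, and nothing in the $\dt$-structure on $S$ ``forces'' this. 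It is in fact true, but for a different reason: $\Delta\:W_{n+1}(C)\to W_nW_1(C)$ admits a functorial ring-theoretic retraction (the remark following lemma~\ref{lem-witt-induction}), so $\Spec(\Delta)$ is a split epimorphism and precomposition with it is injective on maps into any sheaf. Without invoking that (or some substitute), the reduction as written does not go through.

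The simpler repair, and the route the paper takes, is to skip the reduction entirely: for an affine $S$-scheme $\Spec(R)$, adjunction identifies $\Hom_S(\Spec(R),J^k_S(X))$ with $\Hom_S(\Spec W_k(R),X)$, where $\Spec W_k(R)$ is viewed over $S$ via the $\dt$-structure on $S$; under this identification the relative fork on $R$-points becomes
$$
X(W_{n+1}(R))\longmap X(W_nW_1(R))\rightrightarrows X(W_{n-1}W_1(R))
$$
(morphisms over $S$), and then your affine/ind-affine argument via lemma~\ref{lem-witt-induction} and the exactness of filtered colimits applies verbatim, with no statement about the fork for $S$ needed.
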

\begin{proof}
	For any affine $S$-scheme $\Spec(R)$, the diagram of $R$-points is
		$$
		\displaylabelfork{X(W_{n+1}(R))}{\Delta}{X(W_nW_1(R))}{\trunc}{\Delta\circ 
			W_n(\trunc)}{X(W_{n-1}W_1(R))}.
		$$
	It is enough to show that this is an equaliser diagram.
	If $X$ is affine, this is an immediate consequence of lemma~\ref{lem-witt-induction}.
	If $X$ is ind-affine, the result follows formally from the affine case and the fact
	that for filtered colimits we have
	$$(\colim_i X_i)(C)=\colim_i \big(X_i(C)\big)$$ 
	for any ring $C$ and the fact that finite limits are preserved by
	filtered colimits.
\end{proof}

\begin{proof}[Proof of proposition \ref{pro:W1-enough}]
First consider the category $\dtoind'$ consisting of pairs $(X,\beta)$, where $X\in\Sh_S$ and $\beta$ is any 
morphism $X\to J_S^1(X)$, and where a morphism $(X,\beta)\to (X',\beta')$ is any morphism $X\to X'$ compatible 
with $\beta$ and $\beta'$. (This is the category of so-called co-algebras for the functor $J_S^1$.)
Then $\dtoind$ is a full subcategory of $\dtoind'$, and
$\dtoind'$ is comonadic over $\Sh_S$ with comonad $X\mapsto \prod_n (J_S^1)^{\circ n}(X)$.
Therefore it is enough to show that for any object	
$(X,\beta)$ of $\dtoind$ and for any $n\geq 1$, the composition
	$$
	\beta_n\:X\longlabelmap{\beta} J_S^1(X) \longlabelmap{J_S^1(\beta)} J_S^1 J_S^1(X) \longlabelmap{}
	 \cdots\longlabelmap{} (J_S^1)^{\circ n}(X)
	$$
factors through the monomorphism
	$$
	\nu_n\: J_S^n(X) \longmap (J_S^1)^{\circ n}(X)
	$$
obtained by iterating the maps 
$\nu_{1,m-1}\:J_S^m(X)\to J_S^1 J_S^{m-1}(X)$ starting at $m=n$ and going down to $m=2$.
We do this by induction, the case $n=1$ being clear.
So assume $\beta_{n}$ factors through $\nu_{n}$, yielding a map $\alpha_{n}\:X\to J_S^n(X)$.

First, let $\beta'_{n+1}$ denote the composition
	$$
	X \longlabelmap{\beta} J_S^1(X) \longlabelmap{J_S^1(\alpha_n)} J_S^1 J_S^n(X),
	$$
and let
	$$
	\nu'_{n+1}\: J_S^{n+1}(X) \longmap J_S^1 J_S^n(X)
	$$
denote the comultiplication map $\nu_{1,n}$.
When composed with the monomorphism $J_S^1(\nu_n)\: J_S^1J_S^n(X)\to (J_S^1)^{\circ (n+1)}(X)$, the map $\beta'_{n+1}$ becomes $\beta_{n+1}$ and $\nu'_{n+1}$ becomes $\nu_{n+1}$.
Therefore we have $\nu'_{n}\circ\alpha_n = \beta'_{n}$, since $\nu_n\circ\alpha_n=\beta_n$.

Now, to prove that $\beta_{n+1}$ factors through $\nu_{n+1}$, 
it is certainly enough to prove that $\beta'_{n+1}$ factors through $\nu'_{n+1}$.
This will then follow from proposition~\ref{pro:jet-equalizer} by the universal property of equalisers, 
once we verify the equation
	$$
	{J_S^1(\jp)}\circ (J_S^1(\alpha_{n})\circ \beta) = {\nu'_n\circ\jp}\circ (J_S^1(\alpha_{n})\circ \beta )
	$$
But this holds because both sides equal $\beta'_{n}$:
	$$
	{J_S^1(\jp)}\circ J_S^1(\alpha_{n})\circ \beta = J_S^1(\alpha_{n-1}) \circ \beta = \beta'_{n}
	$$
and
	$$
	\nu'_n\circ\jp\circ J_S^1(\alpha_{n})\circ \beta = \nu'_n\circ \alpha_{n}\circ\jp\circ \beta
	 = \nu'_n\circ \alpha_{n} = \beta'_{n}.
	$$
\end{proof}

\subsection{$\dt$-structures on ind-affine sheaves} 
Let $S$ be a ind-affine sheaf. We say that $S$ is \emph{$p$-torsion free} if it is of the form $\colim_i
\Spec(R_i)$, where the pro-abelian group $(R_i[p])_{i\in I}$ is isomorphic to the zero pro-abelian group, where
$R_i[p]$ denotes the group of $p$-torsion elements in $R_i$. This is equivalent to saying that for any $i\in
I$, there exists a $j\geq i$ such that the map $R_j[p]\to R_i[p]$ is zero.

\begin{prop}\label{prop:p-torsion-free-affine-frobenius-lambda} 
The forgetful functor from the category of $p$-torsion free ind-affine $\dt$-sheaves to the category of
$p$-torsion free ind-affine sheaves equipped with a lift of the Frobenius is an equivalence.
\end{prop}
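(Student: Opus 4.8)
The plan is to transport the statement to the category of pro-rings, where it reduces to a computation with the first ghost map. Recall that an ind-affine sheaf is the same thing, contravariantly, as a pro-ring, and that morphisms of ind-affine sheaves are morphisms of the associated pro-rings. So I fix a presentation $X=\colim_{i\in I}\Spec(R_i)$ of a $p$-torsion free ind-affine sheaf and write $\mathcal{R}=(R_i)_{i\in I}$ for the associated pro-ring; the hypothesis says exactly that $\mathcal{R}[p]:=(R_i[p])_{i\in I}$ is pro-zero. Unwinding $J^1(X)(\Spec R_i)=X(\Spec W_1(R_i))$ and the fact that the colimit defining $X$ may be computed in presheaves (every affine being quasi-compact and quasi-separated in the \'etale topology), one finds $\Hom_{\Sh}(X,J^1(X))=\lim_i X(\Spec W_1(R_i))=\lim_i\colim_j\Hom(R_j,W_1(R_i))=\Hom_{\mathrm{pro}}(\mathcal{R},W_1(\mathcal{R}))$, where $W_1(\mathcal{R}):=(W_1(R_i))_i$ and $\Hom_{\mathrm{pro}}$ denotes homomorphisms of pro-rings. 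Under this identification $\jp\colon J^1(X)\to X$ becomes postcomposition with the truncation $\trunc\colon W_1(\mathcal{R})\to W_0(\mathcal{R})=\mathcal{R}$, and $J^1$ of a sheaf map becomes $W_1$ of the corresponding pro-ring map. Taking $S=\Spec(\bZ)$ with its unique $\dt$-structure in proposition~\ref{pro:W1-enough} — and noting $J^1_{\Spec(\bZ)}=J^1$, since $\Spec(\bZ)$ is terminal and $\bZ$ is the initial ring — I conclude: a $\dt$-structure on $X$ is exactly a pro-ring homomorphism $\gamma\colon\mathcal{R}\to W_1(\mathcal{R})$ with $\trunc\circ\gamma=\id$; a sheaf map $f\colon X\to Y$ between ind-affine $\dt$-sheaves is a $\dt$-morphism precisely when the corresponding pro-ring map $f^*\colon\mathcal{R}_Y\to\mathcal{R}_X$ satisfies $W_1(f^*)\circ\gamma_Y=\gamma_X\circ f^*$; and the Frobenius lift attached to $\gamma$ corresponds on pro-rings to $\mathrm{pr}_2\circ\ghost_1\circ\gamma\colon\mathcal{R}\to\mathcal{R}$, as one already sees in the affine case from $\varphi(x)=x^p+p\dt(x)$.

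The one substantial point — and the step I expect to be the main obstacle — is the behaviour of the first ghost map on $p$-torsion free pro-rings. Levelwise $\ghost_1\colon W_1(R_i)\to R_i\times R_i$ is $(x_0,x_1)\mapsto(x_0,x_0^p+px_1)$; it is surjective onto the subring $S_i:=\{(a,b)\in R_i\times R_i:\ b\equiv a^p\pmod{pR_i}\}$, has kernel $\{0\}\times R_i[p]$, and satisfies $\mathrm{pr}_1\circ\ghost_1=\trunc$. Passing to pro-rings, $\ghost_1$ factors as $W_1(\mathcal{R})\to\mathcal{S}_{\mathcal{R}}\hookrightarrow\mathcal{R}\times\mathcal{R}$, where $\mathcal{S}_{\mathcal{R}}:=(S_i)_i$ is the evident sub-pro-ring; the first map is levelwise surjective with kernel $\mathcal{R}[p]$, hence an isomorphism of pro-rings because $\mathcal{R}[p]$ is pro-zero, and the inclusion is a levelwise, hence pro-, monomorphism. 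Therefore $\ghost_1\colon W_1(\mathcal{R})\to\mathcal{R}\times\mathcal{R}$ is a pro-monomorphism with image $\mathcal{S}_{\mathcal{R}}$, and both $\ghost_1$ and the assignment $\mathcal{R}\mapsto\mathcal{S}_{\mathcal{R}}$ are functorial in $\mathcal{R}$. This is the only place where the $p$-torsion hypothesis is used, and it is indispensable: without it a Frobenius lift need not arise from a $\dt$-structure, as already for $\Spec(\bF_p)$, which carries the Frobenius lift $\id$ but no $\dt$-structure.

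Granting this, the equivalence follows formally. The forgetful functor sends $\gamma$ to the Frobenius lift $\varphi_\gamma:=\mathrm{pr}_2\circ\ghost_1\circ\gamma$; since $\mathrm{pr}_1\circ\ghost_1\circ\gamma=\trunc\circ\gamma=\id$ we have $\ghost_1\circ\gamma=(\id,\varphi_\gamma)$, which factors through $\mathcal{S}_{\mathcal{R}}$, forcing $\varphi_\gamma(r)\equiv r^p\pmod p$, so $\varphi_\gamma$ is indeed a lift of Frobenius. Conversely, given a lift of Frobenius $\varphi\colon\mathcal{R}\to\mathcal{R}$, the map $(\id,\varphi)\colon\mathcal{R}\to\mathcal{R}\times\mathcal{R}$ factors through $\mathcal{S}_{\mathcal{R}}$, and composing with the inverse of the pro-isomorphism $\ghost_1\colon W_1(\mathcal{R})\isomto\mathcal{S}_{\mathcal{R}}$ produces $\gamma_\varphi\colon\mathcal{R}\to W_1(\mathcal{R})$ with $\trunc\circ\gamma_\varphi=\id$ — a $\dt$-structure — satisfying $\varphi_{\gamma_\varphi}=\varphi$; and $\gamma_{\varphi_\gamma}=\gamma$ because $\ghost_1$ is a pro-monomorphism. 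Thus, for each fixed $X$, the forgetful functor induces a bijection between $\dt$-structures and lifts of Frobenius, which gives essential surjectivity. For full faithfulness, a map $f\colon X\to Y$ of $\dt$-sheaves is a $\dt$-morphism iff $W_1(f^*)\circ\gamma_Y=\gamma_X\circ f^*$; composing both sides with the pro-monomorphism $\ghost_1$ for $\mathcal{R}_X$ and using the naturality $\ghost_1\circ W_1(f^*)=(f^*\times f^*)\circ\ghost_1$, this becomes $(f^*,\,f^*\circ\varphi_Y)=(f^*,\,\varphi_X\circ f^*)$, i.e. $f^*\circ\varphi_Y=\varphi_X\circ f^*$ — exactly the condition that $f$ commute with the Frobenius lifts. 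Hence the forgetful functor is fully faithful and essentially surjective, so an equivalence.
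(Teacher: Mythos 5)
Your proof is correct and is essentially the paper's own argument: you reduce to level~$1$ via proposition~\ref{pro:W1-enough}, identify Frobenius lifts with pro-ring sections of the image $\mathcal{S}_{\mathcal{R}}$ of the first ghost map (the paper's $\overline{W}_1$), and use the pro-vanishing of $(R_i[p])_i$ to invert $W_1(\mathcal{R})\to\mathcal{S}_{\mathcal{R}}$, exactly as in the paper's diagram with the dashed arrow. The only difference is that you spell out full faithfulness on morphisms via the naturality and monomorphy of $\ghost_1$, which the paper leaves implicit.
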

\begin{proof} 
Let $S=\colim_{i\in I} \Spec(R_i)$, where $(R_i)_{i\in I}$ is a $p$-torsion free pro-ring.
First we show that the functor is essentially surjective,
that any Frobenius lift on $S$ comes from a unique $\dt$-structure on $S$.

Let $\overline{W}_1(R_i)\subset R_i\times R_i$ denote the image of the ghost map $W_1(R_i)\to R_i\times R_i$. 
So we have $\overline{W}_1(R_i)=\{\langle x_0,x_1\rangle \in R_i^2 \mid x_1\equiv x_0^p\bmod pR_i\}$. Therefore
sections of the projection $\overline{W}_1(R_i)\to R_i$ are in natural bijection to Frobenius lifts on $R_i$.

We now show that this also holds for pro-rings. Suppose we have a Frobenius lift $\varphi$ on $(R_i)_{i}$.
So given any index $i\in I$, there is an index $j\geq i$ and a map $\varphi_{i,j}\:R_j\to R_i$ such that
for all $x\in R_j$, we have 
$\varphi_{i,j}(x)\equiv \bar{x}^p \bmod pR_i$, where $\bar{x}$ denotes the image of $x$ under the structure map
$R_j\to R_i$. Therefore the ring homomorphism $R_j\to R_i\times R_i$ defined by 
$x\mapsto \langle \bar{x},\varphi_{i,j}(x)\rangle$ has image in $\overline{W}_1(R_i)$, and hence on the pro-rings it defines a map $s\:(R_i)_i\ \to (\overline{W}_1(R_i))_i$ which is a section 
of the projection onto the left-hand component. Observe further that
its composition $(R_i)_i\to (\overline{W}_1(R_i))_i \to (R_i)_i$ with
the projection onto the right-hand component is indeed the given Frobenius lift $\varphi$ and that $s$ is the
only section with this property, since $\overline{W}_1(R_i)$ is a subset of $R_i\times R_i$. In this way,
sections of the projection $(W_1(R_i))_i\to (R_i)_i$ are in natural bijection to Frobenius lifts on $(R_i)_i$.

Since the analogous result for $\dt$-structures and $W_1$ is true by proposition~\ref{pro:W1-enough},
all that remains is to show 
that the map of pro-rings $(W_1(R_i))_i\to (\overline{W}_1(R_i))_i$ is a pro-isomorphism.
So for any $j\geq i$, consider the commutative diagram of exact sequences:
	$$
	\xymatrix{
	0 \ar[r] & R_j[p] \ar^V[r]\ar[d] & W_1(R_j) \ar^{\ghost_1}[r]\ar[d] 
		& \overline{W}_1(R_j) \ar[r]\ar[d]\ar@{-->}[dl] 
		& 0 \\
	0 \ar[r] & R_i[p] \ar^V[r] & W_1(R_i) \ar^{\ghost_1}[r] & \overline{W}_1(R_i) \ar[r] & 0,
	}
	$$
where $V(x)=(0,x)$.
As $(R_i)_i$ is $p$-torsion free, for any index $i$ 
there exists an index $j\geq i$ such that the map $R_j[p]\to R_i[p]$ is zero. It 
follows that for such $j$ there exists a unique map $\overline{W}_1(R_j)\to W_1(R_i)$ such that both the 
triangles in the diagram above commute. It therefore defines a morphism of pro-rings 
$(\overline{W}_1(R_i))_i\to (W_1(R_i))_i$ which is an inverse
of the map $(W_1(R_i))_i\to (\overline{W}_1(R_i))_i$ on both the left and the right.
\end{proof}

\begin{rema}\label{rema:p-tor-free-extra} If $S=\colim_{i}\Spec(R_i)$ is a $p$-torsion free ind-affine sheaf then: \begin{enumerate}[label=(\roman*)]
\item $W(S)$ is a $p$-torsion free ind-affine sheaf. Indeed, it is enough to show that
the pro-ring  $(W_n(R_i))_{i\in I}$ is $p$-torsion free for each $n$, but this is true because the kernel of the $n$-th ghost map \[\ghost_n\: W_n(R)\to \Pi_n(R)\] is the set of Witt vectors with Witt components $(x_0, x_1, \ldots, x_n)$ such that $p^i x_i=0$ for $0\leq i \leq n.$
(Note that it does not matter whether we use the Buium--Joyal components or the Witt components here. This is because the statement admits a coordinate-free formulation. It is equivalent to the element lying in the kernel of the map $W_n(R)\to W_j(R/(\text{$p^j$-torsion}))$ for all $j\leq n$. The proof, however, does seem
to be easier in the Witt coordinates.)
\item If $X\to S$ is relatively representable, flat and affine then $X$ is a $p$-torsion free ind-affine sheaf. Indeed, if $X=\colim \Spec(B_i)$, where $\Spec(B_i)=X\times_S \Spec(R_i)$, then each $B_i$ is flat over $R_i$ and so $B_i[p]=R_i[p]\otimes_{R_i}B_i$ and hence for each $i$ there is a $j\geq i$ such that $R_i[p]\otimes_{R_i}B_i \to R_i[p]\otimes_{R_i}B_i$ is zero.
\end{enumerate}
\end{rema}

\begin{prop}\label{prop:lambda-quotients} Let $S=\colim S_i$ be an ind-affine $\dt$-sheaf, let $X\to S$ be a relatively affine $\dt_S$-sheaf and let $E$ be a $\delta_S$-equivalence relation on $X$ in the category of \textup{fpqc}-sheaves.\footnote{We note that as $S$ and $W(S)$ are ind-affine sheaves it follows that they are sheaves on $\Aff$ for the fpqc topology and that all relatively affine sheaves (for the \'etale topology) over them are also sheaves on $\Aff$ the fpqc topology.} If the equivalence relation $E$ is one of the following:
\begin{enumerate}[label=\textup{(\alph*)}]
\item smooth,
\item finite locally free,
\end{enumerate} then the quotient $X/E$ in the category of \textup{fpqc}-sheaves over $S$ admits a unique $\delta_S$-structure compatible with the quotient map $X\to X/E$.
\end{prop}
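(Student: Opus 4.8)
The plan is to form the quotient $Q:=X/E$, i.e.\ the coequaliser of the pair of projections $p_1,p_2\:E\rightrightarrows X$ in the category of fpqc-sheaves over $S$, with quotient map $q\:X\to Q$, and then to descend the $\dt_S$-structure of $X$ along $q$ by a purely formal argument. The hypotheses (a) and (b) enter at exactly one point: they guarantee that $Q$ is geometric enough for its arithmetic jet spaces to be well behaved. Concretely, in case (a) the morphism $E\to X$ is smooth and in case (b) it is finite locally free, so by the standard theory of quotients by smooth (resp.\ finite locally free) equivalence relations, $Q$ is an (ind-)algebraic space --- in case (b) even an (ind-)affine scheme, over which $X$ is finite locally free. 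In particular $Q$ is both an \'etale sheaf and an fpqc-sheaf, and $q$, being a coequaliser map, is an epimorphism of fpqc-sheaves over $S$. (If convenient one may reduce to $S=\Spec(A)$ affine at the outset, in which case $X$ is an affine scheme; this is not essential.)

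The key point is then that $J_S(Q)$ is an fpqc-sheaf. Since $Q$ is an (ind-)algebraic space and $S$ is ind-affine, each $J^n(Q)$ is an (ind-)algebraic space by the theorem recalled above that $J^n$ carries algebraic spaces to algebraic spaces; hence $J^n_S(Q)=J^n(Q)\times_{J^n(S)}S$ is an fpqc-sheaf over $S$, and therefore so is $J_S(Q)=\lim_n J^n_S(Q)$. (For a general $\dt_S$-equivalence relation this step would fail, since $W_n$ need not preserve faithful flatness; this is precisely why (a) and (b) are imposed.)

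With this in hand the descent is routine. Write $\alpha_X\:X\to J_S(X)$ and $\alpha_E\:E\to J_S(E)$ for the co-actions of the comonad $J_S$ defining the $\dt_S$-structures on $X$ and $E$. Since $p_1$ and $p_2$ are $\dt_S$-morphisms we have $J_S(q)\circ\alpha_X\circ p_i=J_S(q\circ p_i)\circ\alpha_E$ for $i=1,2$, and since $q\circ p_1=q\circ p_2$ the composite $J_S(q)\circ\alpha_X\:X\to J_S(Q)$ coequalises the pair $E\rightrightarrows X$. As $Q$ is the coequaliser of this pair in the category of fpqc-sheaves over $S$ and $J_S(Q)$ lies in that category, there is a unique $S$-morphism $\alpha_Q\:Q\to J_S(Q)$ with $\alpha_Q\circ q=J_S(q)\circ\alpha_X$. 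That $\alpha_Q$ is a co-action of $J_S$, i.e.\ that $\jp\circ\alpha_Q=\id_Q$ and $\nu\circ\alpha_Q=J_S(\alpha_Q)\circ\alpha_Q$, follows because both identities hold after precomposition with the epimorphism $q$, using the naturality of the counit $\jp$ and of the comultiplication $\nu$ together with the corresponding identities for $\alpha_X$. Thus $\alpha_Q$ is a $\dt_S$-structure on $Q=X/E$ and $q$ is a $\dt_S$-morphism by construction. Uniqueness is immediate: any $\dt_S$-structure $\alpha_Q'$ on $Q$ for which $q$ is a $\dt_S$-morphism satisfies $\alpha_Q'\circ q=J_S(q)\circ\alpha_X=\alpha_Q\circ q$, whence $\alpha_Q'=\alpha_Q$ since $q$ is an epimorphism.

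I expect the main obstacle to be the geometric input of the second paragraph: verifying that $X/E$ is an (ind-)algebraic space (resp.\ (ind-)affine scheme) in cases (a) and (b), and unwinding the ind-object bookkeeping so that the jet spaces $J^n(X/E)$ are again algebraic spaces. Once that is settled, the descent of the co-action is formal. In case (b) one could alternatively descend the single section $X\to J^1_S(X)$ to a section $Q\to J^1_S(Q)$ --- both $Q$ and $J^1_S(Q)$ being ind-affine here --- and then invoke Proposition~\ref{pro:W1-enough} to promote it to a full $\dt_S$-structure.
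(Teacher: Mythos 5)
Your proposal is correct in substance, but it follows a genuinely different route from the paper's, most visibly in case (b). The paper handles (a) in one line (for a smooth equivalence relation the fpqc quotient coincides with the \'etale quotient, which is automatically the quotient in $\dt_S$-sheaves because $\dtSh{S}$ is comonadic over $\Sh_S$), and for (b) it first shows by SGA1 descent that $X/E$ is relatively affine over $S$, then builds the \emph{monad} action $\rho\:W(X/E)\to X/E$ by hand: since $W_n$ preserves equalisers of rings, applying $W_n$ to the presentation $E_i\rightrightarrows X_i\to X_i/E_i$ again yields coequalisers of ind-affine sheaves, so the $W$-action on $X$ descends, and associativity is checked by a diagram chase through those coequalisers. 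You instead descend the \emph{comonad} coaction $\alpha_X\:X\to J_S(X)$ directly via the universal property of the fpqc coequaliser, the geometric input being that $J_S(X/E)$ is an fpqc sheaf. What your approach buys is uniformity (cases (a) and (b) are treated by one argument) and the avoidance of the associativity chase; what it costs is heavier external machinery in case (a): you need $X/E$ to be an (ind-)algebraic space (Artin/bootstrap for flat, locally finitely presented equivalence relations), that $J^n$ preserves algebraic spaces, and Gabber's theorem that algebraic spaces satisfy fpqc descent --- inputs the paper's one-line argument for (a) largely sidesteps, and which in case (b) you can avoid entirely since there $X/E$ is relatively affine (the same SGA1 input as the paper), so $J^n_S(X/E)$ is relatively affine over $S$ and hence an fpqc sheaf by the footnote to the proposition.

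Two small points to patch. First, cancelling the epimorphism $q$ to verify the coaction axioms is legitimate only when the target is itself an fpqc sheaf; for the counit the target is $Q$, fine, but for coassociativity the target is $J_SJ_S(Q)$, which you do not address. It follows by the same reasoning ($J^m$ is a right adjoint, hence preserves the limit $J_S(Q)=\lim_n J^n_S(Q)$, and each term is again relatively affine over $S$, resp.\ built from jet spaces of ind-algebraic spaces), but it should be said. Second, in passing from $Q=\colim_i Q_i$ to $J^n(Q)=\colim_i J^n(Q_i)$ you are using that filtered colimits of sheaves are computed objectwise on affines (as the paper notes) and, in case (a), some mild finiteness on the $Q_i$; this is routine but is where the ind-object bookkeeping you anticipate actually sits.
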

\begin{proof} (a) If the equivalence relation is smooth then the quotient $X/E$ in the category of fpqc sheaves coincides with the quotient in the category of \'etale sheaves which then coincides with the quotient in the category of $\delta_S$-sheaves.

(b) Now assume that the equivalence relation is finite locally free. Write $E_i=E\times_S S_i$, $X_i=X\times_S S_i$, and $X/E$ and $X_i/E_i$
for the quotients in the category of fpqc sheaves. Then $X/E\times_S S_i=X_i/E_i$ is an affine scheme over $S_i$
(by usual descent, cf.\ Corollaire 7.6 Expos\'e VIII \cite{SGA1}) and hence $X/E$ is a relatively affine scheme over $S$. It remains 
to show that $X/E$ admits a unique $\dt_S$-structure for which the map $X\to
X/E$ is a $\dt_S$-map.

As the functor $W_n$ preserves equalisers of rings and $E \rightrightarrows X$ is a $\dt_S$-equivalence relation, 
for each $i,n$, there is a $j\geq i$ and a diagram of solid arrows
\[
\xymatrix{
W_{n}(E_i)\ar[d]\ar@<.5ex>[r]\ar@<-.5ex>[r]\ar[d] 
	& W_{n}(X_i)\ar[r]\ar[d] 
	& W_{n}(X_i/E_i)\ar@{.>}[d]\\
E_j\ar@<.5ex>[r]\ar@<-.5ex>[r] 
	& X_j\ar[r] 
	& X_j/E_j}
\] 
whose rows are coequalisers in the category of 
(absolutely) ind-affine sheaves over $S$ and which is commutative in the evident sense. Therefore, we obtain a unique morphism $W_{n}(X_i/E_i)\to X_j/E_j$, as shown, compatible with the rest of the diagram. Moreover, the diagrams above are compatible as $i$ and $j$ vary, so that we may take the colimit to obtain
a morphism $\rho_{n}\:W_n(X/E)\to X/E$. These morphisms are compatible as $n$ varies and hence
define a morphism $\rho\:W(X/E)\to X/E$.

To show that $\rho$ is a $\dt_S$-structure on $X/E$, it is enough to check that the $\rho_n$ satisfy a certain
associativity property (which will be recalled below). To do this, consider the diagram
\[
\xymatrix{W_n (W_m(E))
\ar@<.5ex>[r]\ar@<-.5ex>[r]\ar@<-.5ex>[d]\ar@<+.5ex>[d] & W_n(W_m(X))\ar[r]\ar@<-.5ex>[d]\ar@<+.5ex>[d] & W_n(W_m(X/E))\ar@<-.5ex>[d]\ar@<+.5ex>[d]
\\
W_{n+m}(E)
\ar[d]\ar@<.5ex>[r]\ar@<-.5ex>[r]\ar[d] & W_{n+m}(X)\ar[r]\ar[d] & W_{n+m}(X/E)\ar[d]\\
E\ar@<.5ex>[r]\ar@<-.5ex>[r] & X\ar[r] & X/E.}
\] 
defined as follows.
The horizontal morphisms are the obvious ones, the lower vertical maps 
$\rho_{m+n}\:W_{m+n}(\ast)\to\ast$ come from the $\dt_S$-structure or, in the case of $\ast=X/E$,
were constructed above.
The vertical parallel pairs
are given, on the left, by the coplethysm map (\ref{eq-truncated-comonad}), and on the right by
\[
W_{n}(W_m(\ast))\stackrel{\text{inclusion}}{\to} W_{n+m}(W_m(\ast))\stackrel{W_{n+m}(\rho_m)}\to
W_{n+m}(\ast).
\] 
The associativity property we need to show is that the two compositions down the right column agree
for every $m$ and $n$.

However, the two compositions down each of the other columns agree because $X$ and $E$ are $\delta_S$-sheaves. 
Additionally, the rows are still coequalisers in the category of absolutely ind-affine sheaves over $S$, as they 
are colimits of such.
Then because the diagram commutes (in the evident sense), it follows from a diagram chase that the map 
$\rho_{m+n}\:W_{n+m}(X/E)\to X/E$ is equalised by the pair of maps above it.
\end{proof}

\subsection{$\delta$-structures on $p$-torsion free schemes} 
The aim of this section is to prove that a Frobenius lift on a $p$-torsion free scheme
is equivalent to a
$\delta$-structure. Note that this does not automatically reduce to the affine case as the Frobenius lift in
general will not act as the identity on the underlying topological spaces. 

\begin{lemm} \label{lem:frob-lift-to-delta}
Let $X$ be a scheme (or algebraic space) equipped with a lift of the Frobenius $\varphi: X\to X$. Then the two 
compositions 
\[
\xymatrix{
X\times_{\Spec(\Z)} \Spec(\Z/p^{n+1}\Z)\ar[r]^-{i} 
& X \ar@<.5ex>[r]^-{w_{n+1}}\ar@<-.5ex>[r]_-{w_n\circ \varphi} 
& W_{n+1}(X)
}
\] 
agree.
\end{lemm}
\begin{proof}
Because the diagram is functorial in $X$,
we may assume that $X$ lies over $\Spec(\Z/p^{n+1}\Z)$, in which case we are reduced to showing that the maps 
\[
\xymatrix{
X \ar@<.5ex>[r]^-{w_{n+1}}\ar@<-.5ex>[r]_-{w_n\circ \varphi} 
& W_{n+1}(X)
}
\] 
agree. However, the Frobenius lift $\varphi: X\to X$ restricts to a unique Frobenius lift on
any affine \'etale neighbourhood $\Spec(A)\to X$. So we may reduce further to the case where $X=\Spec(A)$ is 
affine.

We now let $(a_0, \ldots, a_{n+1})\in W_{n+1}(A)$ in Witt coordinates and just compute:
\begin{eqnarray*}
\varphi(w_n(a_0, \ldots, a_{n+1})) &=& \varphi(a_0^{p^n}+\cdots + p^n a_n)\\
&=& \varphi(a_0)^{p^n}+\cdots + p^n \varphi(a_n)\\
&=& a_0^{p^{n+1}}+\cdots+p^n a_n^p\\
&=& w_{n+1}(a_0, \ldots, a_n), 
\end{eqnarray*}
where the last two equalities hold because $A$ is a $\Z/p^{n+1}\Z$-algebra.
\end{proof}

\begin{prop}\label{prop:delta-strucures-frobenius-lift} The forgetful functor from the category of $p$-torsion free algebraic spaces equipped with a $\delta$-structure to the category of $p$-torsion free algebraic spaces equipped with a lift of the Frobenius is an equivalence.
\end{prop}

\begin{proof} 
We will show essential surjectivity first. Let $X$ be a $p$-torsion free algebraic space and let $\varphi: X\to X$ be a lift of the Frobenius. We will show that there is a unique $\delta$-structure on $X$ inducing $\varphi.$ 
	
We will first prove that for each $n\geq 0$ there is a unique map 
	$
	\mu_n: W_n(X)\to X
	$ 
such that $\ghost_i\circ\mu_n=\varphi^{\circ i}$ for $0\leq i\leq n.$ 
For $n=0$, we take $\mu_0=\id_X$. Then by induction, we may construct $\mu_{n+1}$, assuming
$\mu_n$ has been constructed.
So it is enough to prove there exists a unique map 
$\mu_{n+1}: W_{n+1}(X)\to X$ such that $\mu_{n+1}|_{W_n(X)}=\mu_n$ and 
$\mu_{n+1}\circ \ghost_{n+1}=\varphi^{\circ (n+1)}$. 

Now consider the diagram
\[
\xymatrix{
X\times \Spec(\Z/p^{n+1}\Z)\ar@<-.5ex>[r]_-{i_2\circ i}\ar@<.5ex>[r]^-{i_1\circ\bar{\ghost}_{n+1}}
& W_n(X)\amalg X \ar[d]_{\mu_n\amalg \varphi^{\circ n+1}}\ar[r]
& W_{n+1}(X)\ar@{.>}[dl]^{\mu_{n+1}}\\
& X,
}
\]
where $i_1$ and $i_2$ denote the summand inclusions, $i$ is as in lemma~\ref{lem:frob-lift-to-delta},
and $\bar{\ghost}_{n+1}$ denotes
the reduced $(n+1)$-st ghost map of section 17.1 in~\cite{Borger:BGWV-II}. 
The two properties we require $\mu_{n+1}$ to satisfy are equivalent to the commutativity of the
right-hand triangle. Thus it is enough to prove the existence
and uniqueness of the map $\mu_{n+1}$ as shown making the triangle commute.

The top row is however a coequaliser in the category of algebraic spaces by theorem 17.3 
of~\cite{Borger:BGWV-II}.
So the existence and uniqueness of our map $\mu_{n+1}$ is equivalent to the vertical map 
$\mu_n\amalg \varphi^{\circ n+1}$ coequalising the two left horizontal arrows. In other words,
it is equivalent to the equality $\mu_n\circ\bar{\ghost}_{n+1}=\varphi^{\circ(n+1)}\circ i$.
But this follows immediately
from the equality $\mu_n\circ \ghost_n=\varphi^{\circ n}$, which holds by induction, and
the equality $\bar{\ghost}_{n+1}=\ghost_n\circ \varphi\circ i$, which is the statement of
lemma~\ref{lem:frob-lift-to-delta}.

In the limit, the maps $\mu_n\:W_n(X)\to X$ induce a map $\mu\:W(X)\to X$. To show that this is in fact an
action of the comonad $W$, we must show that the two induced maps $W(W(X))\rightrightarrows X$ agree. However we
know that their compositions with the ghost maps $\bN\times\bN\times X\to W(W(X))$ agree. This is just the
associativity property of the action of $\bN$ on $X$ induced by the endomorphism $\varphi$. Therefore it is
enough to observe that the ghost morphisms $w_n: \amalg_{n+1}X\to W_n(X)$ are epimorphisms in the category of
algebraic spaces. This holds because $\ghost_n$ is surjective and the induced map on structure sheaves
$\sO_{W_n(X)}\to \ghost_{n*}\sO_{\amalg_{n+1}X}$ is injective, which can be shown by applying the \'etale
localisation theorem~\ref{thm:w-etale} to reduce to $X$ affine and then invoking the fact that the ghost map
$W_n(R)\to R^{n+1}$ is injective and integral when $R$ is a $p$-torsion free ring. (The epimorphism property can
also be proved by the applying the coequaliser property in diagram above inductively.)

Therefore the functor in question essentially surjective. It is obviously faithful. It remains to show that it
is full. So let $X$ and $Y$ be two $p$-torsion free algebraic spaces equipped with lifts of the Frobenius, and
let $f: X\to Y$ be a morphism compatible with these lifts. Showing that $f$ is a $\delta$-morphism amounts to
showing that two maps $W(X)\rightrightarrows Y$ agree. However their compositions with the ghost map
$\ghost\:\bN\times X\to W(X)$ agree because $f$ is $\varphi$-equivariant. Now we can again invoke the fact that
the ghost maps $\ghost_n$ are epimorphisms when $X$ is a $p$-torsion free algebraic space 
to conclude that the original maps $W(X)\rightrightarrows Y$ do agree.
\end{proof}

\subsection{Affineness of the coghost map} We collect here a handful of affineness results concerning the coghost maps. The main application of these results is to prove certain uniqueness results for $\dt$-structures on abelian schemes.

\begin{lemm}\label{lemm:coghost-is-affine-finite-point-property} Let $X$ be a scheme with the property that every finite set of points of $X$ is contained in an open affine subscheme of $X$. Then for each $n\geq 0$ the coghost map \[\coghost_n\: J^n(X)\to X^{n+1}\] is affine. 
\end{lemm}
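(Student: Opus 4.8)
The plan is to reduce to the affine case by a suitable open cover of the target and then to identify each piece of the preimage with an arithmetic jet space of an affine scheme. For each affine open $U\subseteq X$ the open subscheme $U^{n+1}=U\times_{\Spec(\Z)}\cdots\times_{\Spec(\Z)}U$ of $X^{n+1}$ is affine, and as $U$ runs over the affine opens of $X$ these cover $X^{n+1}$: a point of $X^{n+1}$ has images $P_0,\dots,P_n$ in $X$ under the $n+1$ projections, and by hypothesis the finite set $\{P_0,\dots,P_n\}$ lies in a common affine open $U$, so the point lies in $U^{n+1}$. Since affineness of a morphism is local on the target, it suffices to prove that $\coghost_n^{-1}(U^{n+1})$ is affine for each such $U$ (the induced map $\coghost_n^{-1}(U^{n+1})\to U^{n+1}$ being then a morphism of affine schemes).

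Fix a ring $C$ and let $s_i\colon\Spec(C)\to\Spec(W_n(C))$ be the morphism induced by the $i$-th ghost component $\ghost_n^{(i)}\colon W_n(C)\to C$. Unwinding the definition of $\coghost_n$, a $W_n(C)$-point $\xi\colon\Spec(W_n(C))\to X$ lies in $\coghost_n^{-1}(U^{n+1})(C)$ precisely when each composite $\xi\circ s_i$ factors through $U$, i.e.\ when the open subscheme $\xi^{-1}(U)\subseteq\Spec(W_n(C))$ contains $\bigcup_{i=0}^{n}s_i(\Spec C)$. The crux is the claim that the ghost sections jointly cover, $\bigcup_{i=0}^{n}s_i(\Spec C)=\Spec(W_n(C))$. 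Granting this, $\xi^{-1}(U)$ must be all of $\Spec(W_n(C))$, so $\xi$ factors through $U$ and $\coghost_n^{-1}(U^{n+1})(C)=U(W_n(C))=J^n(U)(C)$, functorially in $C$; hence the natural map $J^n(U)\to J^n(X)$ induced by $U\hookrightarrow X$ is a bijection on $C$-points onto $\coghost_n^{-1}(U^{n+1})(C)$ for every $C$, so it is an isomorphism of $J^n(U)$ onto the subsheaf $\coghost_n^{-1}(U^{n+1})$, which is therefore affine because $U$ is.

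The main obstacle is the covering claim, which I would prove in two steps. First, each $s_i(\Spec C)$ is closed: writing the $i$-th ghost component as a surjection $W_n(C)\twoheadrightarrow S_i$ onto its image $S_i\subseteq C$ followed by the inclusion $S_i\hookrightarrow C$, the ring $C$ is integral over $S_i$ since $c^{p^i}$ — the $i$-th ghost component of the Teichmüller representative of $c$ — lies in $S_i$ for all $c$, making $c$ a root of $T^{p^i}-c^{p^i}\in S_i[T]$; hence $\Spec(C)\to\Spec(S_i)$ is surjective and $s_i(\Spec C)$ is the whole closed subscheme $\Spec(W_n(C)/\ker\ghost_n^{(i)})$. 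So $\bigcup_{i=0}^{n}s_i(\Spec C)$ is the vanishing locus of $\bigcap_{i=0}^{n}\ker\ghost_n^{(i)}=\ker\ghost_n$. Second, $\ker\ghost_n$ is a nil ideal, so this vanishing locus is everything: by the description recalled in Remark~\ref{rema:p-tor-free-extra} every element of $\ker\ghost_n$ is of the form $V(\eta)$ (Verschiebung) with $p\eta\in\ker\ghost_{n-1}$, and by induction on $n$, if $(p\eta)^r=0$ then the Witt identities $V(pz)=pV(z)$ and $V(x)V(y)=V(pxy)$ give $V(\eta)^{2r}=V\big((p^{r-1}\eta^r)(p^r\eta^r)\big)=0$.
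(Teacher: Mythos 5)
Your proof is correct, and while it follows the same global strategy as the paper --- cover $X^{n+1}$ by the affine opens $U^{n+1}$ (using the finite-point hypothesis), identify the preimage of $U^{n+1}$ with $J^n(U)$, and conclude by affineness of $J^n(U)$ and locality of affineness on the target --- it differs at the crucial step. The paper simply cites proposition 12.2 of \cite{Borger:BGWV-II} for the cartesianness of the square $J^n(U)=J^n(X)\times_{X^{n+1}}U^{n+1}$, whereas you prove that identification from scratch: a point $\xi\in X(W_n(C))$ has coghost image in $U^{n+1}$ iff each ghost section $s_i\:\Spec(C)\to\Spec(W_n(C))$ carries $\xi$ into $U$, and these sections jointly cover $\Spec(W_n(C))$ because each $s_i(\Spec C)$ is the closed set $V(\ker\ghost_n^{(i)})$ (via $c^{p^i}\in\mathrm{im}\,\ghost_n^{(i)}$, integrality and lying over) while $\ker\ghost_n=\bigcap_i\ker\ghost_n^{(i)}$ is a nil ideal (your Verschiebung induction, which is correct for the truncated Witt vectors, with the nilpotency exponent doubling at each stage). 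What your route buys is a self-contained and rather transparent explanation of \emph{why} jet spaces respect open immersions in this way --- the ghost sections are topologically jointly surjective --- at the cost of redoing, in the special case needed, the content of the cited result; the paper's version is shorter and defers to a statement proved in greater generality elsewhere. Both arguments rely equally on the standard fact that $J^n$ of an affine scheme is affine (representability of the truncated jet functor), so no extra input is hidden there.
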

\begin{proof} The property satisfied by $X$ implies that there is an open affine cover 
$(X_i)_{i\in I}$ of $X$ such that $(X_i^{n+1})_{i\in I}$ is an open cover of $X^{n+1}$. However, the diagram \begin{equation}\label{dia:coghost-affine}
\begin{gathered}
	\xymatrix{
	J^n(X_i)\ar[r]^-{\coghost_{n}}\ar[d] & X^{n+1}_i\ar[d]\\
	J^n(X)\ar[r]^-{\coghost_n} & X^{n+1}
	}
\end{gathered}
\end{equation} is cartesian by proposition 12.2 of \cite{Borger:BGWV-II} (where it is also assumed that the open immersions $X_i\to X$ are closed but the proof that the diagram (\ref{dia:coghost-affine}) is cartesian does not use this assumption) and the top morphisms is affine. As $(X_i^{n+1})_{i\in I}$ is a cover of $X^{n+1}$ it follows that the bottom row of (\ref{dia:coghost-affine}) is affine.
\end{proof}

\begin{lemm}\label{lemm:coghost-affine-relative-coghost-affine} Let $S=\colim_{i\in I} S_i$ be an ind-affine sheaf equipped with a $\dt$-structure and let $f\: X\to S$ be sheaf over $S$. If, setting $X_i=X\times_S S_i$, the coghost maps \[\coghost_n\: J^n(X_i)\to X_i^{n+1}\] are affine for all $i$ then the relative coghost map \[\coghost_{X/S, n}\: J^n_S(X)\to X^{n+1}\times_{S^{n+1}}S\] is affine.
\end{lemm}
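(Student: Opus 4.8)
I would prove this by base-changing along the pieces $S_i$ of $S$ and applying the following cancellation principle: if $Y_1\xrightarrow{u}Y_2\xrightarrow{v}Y_3$ are morphisms of sheaves with both $v$ and $v\circ u$ representable and affine, then $u$ is representable and affine --- for an affine scheme $T\to Y_2$ one has a canonical identification $Y_1\times_{Y_2}T\cong(Y_1\times_{Y_3}T)\times_{Y_2\times_{Y_3}T}T$, which is a fibre product of affine schemes. Throughout, write $Y:=X^{n+1}\times_{S^{n+1}}S$ for the target of $\coghost_{X/S,n}$, write $\iota_i\colon S_i\to S$ for the structure maps, and recall that the map $S\to S^{n+1}$ occurring in the definition of $Y$ is $g:=\coghost_n\circ\alpha$, where $\alpha\colon S\to J^n(S)$ is the truncated $\dt$-structure of $S$.

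Since $\coghost_{X/S,n}$ is a morphism over $S=\colim_i S_i$ and affineness is local on the target, it is enough to show that each base change
\[
\coghost_{X/S,n}\times_S S_i\colon J^n_S(X)\times_S S_i\longrightarrow Y\times_S S_i
\]
is affine, because any affine scheme mapping to $Y$ factors, being quasi-compact, through some $Y\times_S S_i$. Fix $i$. Using that $S_i$ is quasi-compact and that $J^n(S)=\colim_j J^n(S_j)$ and $S^{n+1}=\colim_j S_j^{n+1}$ (the first because $W_n$ of an affine scheme is affine, the second because finite limits commute with filtered colimits in $\Sh$), I would choose an index $m\geq i$ large enough that $\alpha\circ\iota_i\colon S_i\to J^n(S)$ factors through a morphism $a\colon S_i\to J^n(S_m)$ and that $g\circ\iota_i\colon S_i\to S^{n+1}$ factors through a morphism $b\colon S_i\to S_m^{n+1}$.

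Because $J^n$ preserves fibre products and $X_m^{n+1}=X^{n+1}\times_{S^{n+1}}S_m^{n+1}$ (as $X_m=X\times_S S_m$), one then gets canonical identifications
\[
\begin{aligned}
J^n_S(X)\times_S S_i&=J^n(X)\times_{J^n(S)}S_i=J^n(X_m)\times_{J^n(S_m),\,a}S_i,\\
Y\times_S S_i&=X_m^{n+1}\times_{S_m^{n+1},\,b}S_i,
\end{aligned}
\]
under which the composite $J^n_S(X)\times_S S_i\to Y\times_S S_i\to X_m^{n+1}$ becomes $J^n(X_m)\times_{J^n(S_m)}S_i\to J^n(X_m)\xrightarrow{\coghost_n}X_m^{n+1}$. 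Here the first arrow is the base change of $a$, which is a morphism of affine schemes and hence affine (using that $J^n$ takes affine schemes to affine schemes), and the second arrow is affine by hypothesis; so this composite is affine. Likewise the projection $Y\times_S S_i=X_m^{n+1}\times_{S_m^{n+1}}S_i\to X_m^{n+1}$ is the base change of the morphism of affine schemes $b$, hence affine. Taking $Y_1=J^n_S(X)\times_S S_i$, $Y_2=Y\times_S S_i$, $Y_3=X_m^{n+1}$ in the cancellation principle shows that $\coghost_{X/S,n}\times_S S_i$ is affine, and the localisation reduction above then gives the lemma.

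I expect the real work to be the index bookkeeping of the middle step --- choosing a single $m$ that simultaneously absorbs the $\dt$-structure map $\alpha$ and the Frobenius-twisted diagonal $g$, and verifying the two fibre-product identifications --- rather than anything geometrically substantial; the only non-formal inputs are the hypothesis on $\coghost_n\colon J^n(X_m)\to X_m^{n+1}$ and the affineness of $J^n$ on affine schemes. One should resist trying to short-cut the argument by a single base change: the square comparing $\coghost_{X/S,n}$ with the absolute coghost map $\coghost_n\colon J^n(X)\to X^{n+1}$ is cartesian only when $\coghost_n$ is a monomorphism, which it is not in the presence of $p$-torsion, and this is precisely why one must pass through the pieces $X_m$ and use the cancellation trick.
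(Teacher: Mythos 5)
Your proof is correct, and its skeleton coincides with the paper's: the same reduction of affineness to the base changes $\coghost_{X/S,n}\times_S S_i$ (justified, as you do, by quasi-compactness of affine test objects), the same use of quasi-compactness of $S_i$ to factor the structure maps through a single index $m$ (the paper's $j$), and the same two identifications $J^n_S(X)\times_S S_i=J^n(X_m)\times_{J^n(S_m)}S_i$ and $X^{n+1}\times_{S^{n+1}}S_i=X_m^{n+1}\times_{S_m^{n+1}}S_i$. Where you diverge is only the last step: the paper factors the base-changed relative coghost map through the intermediate object $J^n(X_j)\times_{S_j^{n+1}}S_i$, the first arrow being affine as a base change of $\Delta\colon J^n(S_j)\to J^n(S_j)\times_{S_j^{n+1}}J^n(S_j)$ (a morphism of affine schemes) via an explicit cartesian square, and the second being a base change of $\coghost_n\colon J^n(X_j)\to X_j^{n+1}$; you instead show that the composite with the affine projection to $X_m^{n+1}$ is affine and invoke cancellation for affine morphisms, whose proof is the same graph/diagonal manipulation, so the two arguments are essentially equivalent in content, yours being a touch more modular and the paper's more explicit. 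One small point to tighten: for your displayed identification of the composite with $\coghost_n\circ\mathrm{pr}_1$ you must take $b:=\coghost_n\circ a$ (which is indeed a factorisation of $g\circ\iota_i$, by naturality of the coghost maps), rather than an arbitrary factorisation of $g\circ\iota_i$ through $S_m^{n+1}$, since $S_m^{n+1}\to S^{n+1}$ need not be a monomorphism and a mismatched $b$ would spoil that identification; with this choice everything goes through, and it is exactly how the paper arranges it (``it also follows that'' the second factorisation is induced by the first).
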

\begin{proof} 

The morphism $\coghost_{X/S, n}$ is affine if and only if the morphisms \[\coghost_{X/S, n}\times_S S_i\] are
affine for each $i\in I$. Fixing such an $i$, since $S_i$
is quasi-compact and quasi-separated there is some $j\in I$ such that the composition $S_i\to S\to J^n(S)$ 
factors through $J^n(S_j)\to J^n(S)$. Therefore, we have
	$$
	J^n_S(X)\times_S S_i = J^n(X)\times_{J^n(S)} S_i = J^n(X_j)\times_{J^n(S_j)} S_i.
	$$
It also follows that $S_i\to S \to S^{n+1}$ factors through $S_j\to S^{n+1}$ and hence we also have
	$$
	X^{n+1}\times_{S^{n+1}} S_i = X_j^{n+1}\times_{S_j^{n+1}} S_i.
	$$
We can now express $\coghost_{X/S, n}\times_S S_i$ as the composition along the top
row in the diagram

\begin{equation*}
\xymatrix{
J^n(X_j)\times_{J^n(S_j)} S_i\ar[d]\ar[r] &  J^n(X_j)\times_{S_j^n} S_i\ar[r]\ar[d] &  X_j^n\times_{S_j^n} S_i\\
J^n(S_j)\ar^-\Delta[r] & J^n(S_j)\times_{S_j^n} J^n(S_j) & }
\label{eqn:coghost-affine}
\end{equation*} 
The square in this diagram is cartesian, which shows that the top left arrow is affine. Since the top right arrow is affine by hypothesis, their composition $\coghost_{X/S, n}\times_S S_i$ is affine, and hence so is $\coghost_{X/S, n}.$
\end{proof}

We end this section with an application of the affineness results above which will have applications to the existence and uniqueness of $\dt$-structures on abelian schemes.

\begin{prop}\label{prop:anti-affine-lambda} Let $S$ be a $\dt$-sheaf and suppose we have the following:
\begin{enumerate}[label=\textup{(\alph*)}]
\item $X$ is an anti-affine\footnote{An $S$-sheaf $X$ is anti-affine if every morphism from $X$ to a relatively affine $S$-sheaf factors through the structure map $X\to S$.} $\dt_S$-sheaf equipped with a $\dt_S$-point $S\to X$,
\item $G$ is a $\dt_S$-group such that the relative coghost map 
$$\coghost_{G/S}\: J_S(G)\to G^{\bN}\times_{S^{\bN}}S$$
of (\ref{map:rel-coghost-infinite-length}) is affine, and
\item $f\: X\to G$ is an $S$-pointed morphism.
\end{enumerate}
Then $f$ is a $\dt_S$-morphism if (and only if) it is compatible with the relative Frobenius lifts on $X$ and $G$.
\end{prop}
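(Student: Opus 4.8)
The plan is to prove the non-trivial implication: if $f$ respects the relative Frobenius lifts, then it is a $\dt_S$-morphism (the converse is part of the general statement, recalled above, that every $\dt_S$-morphism respects the associated Frobenius lifts). Write $\alpha_X\: X\to J_S(X)$ and $\alpha_G\: G\to J_S(G)$ for the structure maps; then $f$ being a $\dt_S$-morphism means exactly that the two $S$-morphisms
\[
u_1:=\alpha_G\circ f\qquad\text{and}\qquad u_2:=J_S(f)\circ\alpha_X
\]
from $X$ to $J_S(G)$ agree. I would do this in two steps: first show that $u_1$ and $u_2$ become equal after composing with the relative coghost map $\coghost_{G/S}$, and then upgrade this to $u_1=u_2$ using the anti-affineness of $X$, the affineness of $\coghost_{G/S}$, and the group law on $G$.

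For the first step I would invoke the (bookkeeping) fact that for any $\dt_S$-sheaf $Y$ the composite
\[
\coghost_{Y/S}\circ\alpha_Y\: Y\longrightarrow Y^{\bN}\times_{S^{\bN}}S=\prod_{i\in\bN}\varphi_S^{\circ i*}(Y)
\]
has $i$-th component the $i$-fold iterate $\varphi_{Y/S}^{\circ i}\: Y\to\varphi_S^{\circ i*}(Y)$ of the relative Frobenius lift $\varphi_{Y/S}$ of~(\ref{map:rel-frob-lift}) (i.e.\ the relativisation of $\varphi_Y^{\circ i}\: Y\to Y$). This reduces, through the affine case and base change, to the elementary identity $\ghost_n\circ\lambda\: x\mapsto\langle x,\varphi(x),\varphi^{\circ2}(x),\dots\rangle$ for the $\dt$-ring structure map $\lambda\: R\to W(R)$, which is immediate from $\varphi(x)=x^p+p\delta(x)$, the defining property of the ghost polynomials $Z_i$, and the fact that $\alpha_Y$ is itself a $\dt$-morphism. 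Granting it, applying it to $Y=G$ shows the $i$-th component of $\coghost_{G/S}\circ u_1$ is $\varphi_{G/S}^{\circ i}\circ f$, while applying it to $Y=X$ and using naturality of the relative coghost maps in the $S$-sheaf variable shows the $i$-th component of $\coghost_{G/S}\circ u_2$ is $\varphi_S^{\circ i*}(f)\circ\varphi_{X/S}^{\circ i}$. The hypothesis that $f$ respects the relative Frobenius lifts is the case $i=1$ of the identity $\varphi_{G/S}^{\circ i}\circ f=\varphi_S^{\circ i*}(f)\circ\varphi_{X/S}^{\circ i}$, and a one-line induction gives the general case; hence $\coghost_{G/S}\circ u_1=\coghost_{G/S}\circ u_2$.

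For the second step I would use that $J_S$ preserves finite products, so that $J_S(G)$ and $\prod_i\varphi_S^{\circ i*}(G)$ are $S$-group sheaves and, by naturality, $\coghost_{G/S}$ is a homomorphism. Therefore the difference $d:=u_1\cdot u_2^{-1}\: X\to J_S(G)$ satisfies $\coghost_{G/S}\circ d=e$, i.e.\ $d$ factors through the fibre
\[
K:=S\times_{G^{\bN}\times_{S^{\bN}}S}J_S(G)
\]
of $\coghost_{G/S}$ over the identity section. By hypothesis (b) the map $\coghost_{G/S}$ is affine, and affineness is stable under base change, so $K\to S$ is affine; by hypothesis (a) $X$ is anti-affine over $S$, so the $S$-morphism $d\: X\to K$ factors through the structure map $X\to S$. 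To pin down the factorisation I would note that $f$ and the $\dt_S$-point $s\: S\to X$ are $S$-pointed and $G$ is a $\dt_S$-group, so $u_1\circ s=\alpha_G\circ e_G=e_{J_S(G)}$ and $u_2\circ s=J_S(f\circ s)=J_S(e_G)=e_{J_S(G)}$ (using that $e_G$ and $s$ are $\dt_S$-morphisms, that $\alpha_S=\mathrm{id}$, and that $J_S$ carries identity sections to identity sections); hence $d\circ s=e_{J_S(G)}$. Since $K\hookrightarrow J_S(G)$ is a monomorphism, this forces the factorisation of $d$ to be the identity section $S\to K$, so $d$ is the constant morphism at the identity of $J_S(G)$ and therefore $u_1=u_2$.

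I expect the main obstacle to be the honest execution of the coghost computation in the first step at the level of sheaves — one cannot simply reduce to affine $X$, precisely because $X$ is anti-affine — though this is ultimately routine. The conceptual crux is the second step, where hypotheses (a) and (b) genuinely enter through the opposition between $\coghost_{G/S}$ being affine and $X$ being anti-affine, and where the pointedness hypotheses are exactly what is needed to rule out the resulting constant map being anything other than the identity.
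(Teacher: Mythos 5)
Your proposal is correct and follows essentially the same route as the paper: form the difference of $\alpha_{G/S}\circ f$ and $J_S(f)\circ\alpha_{X/S}$, show via the coghost/iterated-Frobenius identity that it lands in the kernel of $\coghost_{G/S}$, which is $S$-affine by hypothesis, and then use anti-affineness of $X$ together with $S$-pointedness to conclude the difference is trivial. Your write-up merely makes explicit the bookkeeping (coghost of the structure map equals the sequence of relative Frobenius iterates, plus the induction from the $i=1$ case) that the paper leaves implicit.
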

\begin{proof} 
Let the $\dt_S$-structures on $X$ and $G$ be given by the $S$-morphisms 
	\[
	\alpha_{X/S}\: X\to J_{S}(X)\quad \text{ and } \quad \alpha_{G/S}\: G\to J_{S}(G).
	\] 
Since $f\: X\to G$ commutes with the relative Frobenius maps on $X$ and $G$, the difference 
	\[
	g:=\alpha_{G/S}\circ f-J_S(f)\circ \alpha_{X/S}\: X\to J_S(G)
	\]
factors through the kernel of the relative coghost homomorphism $\coghost_{G/S}$.

As $X$ and $G$ are equipped with $\dt_S$-points so are $J_S(X)$ and $J_S(G)$, and the morphisms composing $g$ are $S$-pointed, hence so is $g$. However, the kernel of $\coghost_{G/S}$ is $S$-affine by hypothesis and $X$ is $S$-anti-affine, so that $g$ factors through the structure morphism $X\to S$:
\[X\to S\to J_S(G).\] As the morphism $g$ is $S$-pointed, it follows that $g=0$.
So we have $\alpha_{G/S}\circ f=J_S(f)\circ \alpha_{X/S}$ and thus $f$ is a $\dt_S$-morphism.
\end{proof}

\section{Group schemes, torsors and extensions with $\dt$-structures} 

Let $S$ be an object of $\Sh$, assumed to be ind-affine and $p$-adic beginning in section~\ref{sec:delta-torsors}.
The sole purpose of this section is to prove corollary~\ref{cor:ext-equivalence}, which relates extensions of
\'etale group schemes over $S$ to extensions of their lifts to $W(S)$.

\subsection{Background on torsors and extensions}\label{subsec:torsor-constructions} 
Let $G$ and $H$ be a pair of relatively flat and affine
commutative group schemes over $S$. We will write $e_G\: S\to G$ and $\mu_G\: G\times_S G\to G$ for the identity
and multiplication maps (and similarly for all other groups). Then $S$, $G$ and $H$ are all sheaves for the fpqc
topology, and we denote by $\Tor_S(G)$ the fibred category of $G$-torsors over $\Aff_S$ for the fpqc topology.
It is a stack for the fpqc topology. We recall the following constructions for torsors (for those over $S$, with
the obvious extension to any base).

\begin{enumerate}[label=(\roman*)]
\item If $f\: G\to H$ is a homomorphism, we have a morphism of stacks 
	\[
	\Tor_S(G)\to \Tor_S(H)\: X\mto X\otimes_{G}H:=(X\times_S H)/G
	\] 
where $G$ acts on $X\times_S H$ via the restriction of the action of $G\times_S H$ along 
	\[
	G\to G\times_S H: g\mto (g, f(g^{-1})).
	\]
\item If $G$ and $H$ are a pair of sheaves of abelian groups over $S$ then the morphism 
	\[
	\Tor_S(G\times_S H)\to \Tor_S(G)\times_S \Tor_S(H): X\mto (X/H, X/G)
	\] 
is an equivalence of stacks.
\item If $X$ is a $G$-torsor then the dual $X^\vee$ of $X$ is the $G$-torsor whose underlying sheaf is $X$ with 
$G$ acting via the inverse of the given action. It is canonically isomorphic to the construction $X\mto 
X\otimes_{G}G$ in (i) for the homomorphism $G\to G: g\mto g^{-1}.$
\item The monoidal structure on $\Tor_S(G)$ is given by
	\begin{align*}
	\Tor_S(G)\times_S \Tor_S(G)&\to \Tor_S(G)\\
	(X_1, X_2)&\mto X_1\otimes_G X_2:= (X_1\times_S X_2)/G
	\end{align*}
where the quotient by $G$ is via the restriction of the action of $G\times_S G$ along $G\mto G\times_S G: g\mto
(g, g^{-1})$ and the $G$-action on the quotient is via either of the (equal) residual actions on $X_1$ and
$X_2$. This monoidal structure is symmetric and for all pairs of $G$-torsors $X_1, X_2$ we have canonical
isomorphisms 
	\[
	\underline{\Hom}_{S}^{G}(X_1, X_2)\isomto X_2\otimes_{G} X_1^\vee.
	\] 
\end{enumerate}

\subsubsection{}\label{subsubsec:kernel-morphisms-torsors} 
Let $f\: G\to H$ be a homomorphism with relatively representable affine and flat kernel $\ker(f)=K$. We
define $\Tor_S[f]$ to be the fibred category whose sections are given by the category of pairs $(X, \rho)$ where
$X$ is a $G$-torsor and $\rho\: H\to X\otimes_{G}H$ is a trivialisation, i.e., an isomorphism of
$H$-torsors. There is a natural functor
	\[
	\Tor_S(K)\to \Tor_S[f]: X\mto (X\otimes_{K}G, \rho)
	\] 
where $\rho$ is the trivialisation induced by
	\[
	X\otimes_{K}H = (X\times_S H)/K=(X/K)\times_S H=H.
	\] 
This functor is an equivalence of stacks with quasi-inverse given by sending $(X, \rho)$ to the equaliser of the
two morphisms $X\rightrightarrows X\otimes_{G}H$ given by the compositions 
	\[
	X\stackrel{X\times_S e_{H}}{\longrightarrow} X\times_S H\to X\otimes_{G}H \quad \text{ and } 
	\quad X\to S\stackrel{e_H}{\to} H\stackrel{\rho}{\to} X\otimes_{G}H.
	\]

\subsubsection{}\label{subsec:extensions-torsors} An extension of $H$ by $G$ is a short exact sequence of commutative group schemes over $S$ \[0\to G\to E\to H\to 0.\] A morphism of extensions $E\to E'$ is a homomorphism $E\to E'$ compatible with the identity maps on $G$ and $H$. This yields the moduli stack of extensions $\Ext_S(H, G)$ over $\Aff_S.$


The group structure on $H$ induces on $\Aff_H$ the structure of a symmetric monoidal stack over $\Aff_S$, as
follows. The product of two affine $S$-schemes $T_1$ and $T_2$ is the Cartesian product $T_1\times_S T_2$
viewed as an object of $\Aff_H$ via the composition $T_1\times_S T_2\to H\times_S H \to H$ with the product
map. (So if $T_1=T_2=S$, then this reduces to the usual group law on $H(S)$.) This defines a monoidal structure on the category of affine schemes over $H$, and since it is compatible with
base change in $S$, it induces a monoidal structure on the stack $\Aff_S$.

So we may also consider the Hom stack
	\[
	\sHom^{\otimes}_S(H, \Tor_S(G))
	\]
whose fibre over $S$ is the category of symmetric monoidal morphisms
	\[
	e\:\Aff_H\to \Tor_S(G)
	\]
over $\Aff_S$.

There is a natural equivalence of stacks (cf.\ proposition 1.4.23 of expos\'e XVIII in \cite{SGA4.3})
	\[
	\Ext_S(H, G)\to \sHom^{\otimes}_S(H, \Tor_S(G))
	\] 
sending an extension 
	\[
	0\to G\to E\to H\to 0
	\] 
to the symmetric monoidal functor $\eta\: \Aff_H\to \Tor_S(G)$ whose value on $T\to H$ is the $G$-torsor
$E\times_H T$ (the symmetric monoidal structure on this functor is induced by the commutative group structure on
$E$). A quasi-inverse is given by sending a symmetric monoidal functor $\eta\: \Aff_H\to \Tor_S(G)$ to the
$G$-torsor $E:=\eta(\id_H\: H\to H)$, which inherits the structure of an extension of $G$ by $H$ via the
symmetric monoidal structure on $\eta$. Indeed, the addition law on $E$ is obtained via the composition
	\[
	E\times_S E \isomto p_1^*(E)\times_{(H\times_S H)} p_2^*(E)
		\to p_1^*(E)\otimes_{G} p_2^*(E)\isomto \mu^*_H(E)\to E,
	\] 
where $p_i$ are the projections $H\times_S H\to H$ and $\mu_H$ is the multiplication.
The rest we leave to the reader.

\subsection{Torsors and extensions with $\dt$-structures}
\label{sec:delta-torsors} 
Let $\wt{G}/W(S)$ be a flat and affine group scheme over 
$W(S)$ such that either of the following two conditions hold:
\begin{enumerate}[label=(\alph*)]
\item $\wt{G}$ is smooth over $W(S)$,
\item $\wt{G}$ is finite locally free over $W(S)$.
\end{enumerate}
Write $G$ for the pull-back $\wt{G}\times_{W(S)}S$ along the first ghost component. 
(Later we will primarily consider groups $\wt{G}$ which are canonical lifts of a given group $G$, 
but for now $\wt{G}$
can be an arbitrary lift satisfying the properties above, although this is some abuse of notation).

Now assume that $\wt{G}$ has a $\dt_{W(S)}$-structure. A $\dt_{W(S)}$-$\wt{G}$-torsor is a $\wt{G}$-torsor over
$W(S)$ equipped with a $\dt_{W(S)}$-structure compatible with the $\wt{G}$-action. We denote by
$\Tor_{\dt_{W(S)}}(\wt{G})$ the fibred category over $\Aff_S$ whose fibre over an affine $S$-scheme $T$ is the
groupoid of $\dt_{W(T)}$-$\wt{G}_{W(T)}$-torsors. The fibred category $\Tor_{\dt_{W(S)}}(\wt{G})$ is a stack for the
\'etale topology and the constructions (i)--(iv) of section~\ref{subsec:torsor-constructions} for usual torsors
work just as well for the fibred categories of $\dt_{W(S)}$-torsors using proposition
\ref{prop:lambda-quotients} combined with the assumptions (a) and (b) above.

We denote by $g_{\wt{G}}$ the symmetric monoidal functor
\begin{align*}
	\Tor_{\dt_{W(S)}}(\wt{G})&\longlabelmap{g_{\wt{G}}} \Tor_S(G)\\
	X&\mto X\times_{W(S)}S.
\end{align*}

\begin{prop}\label{prop:smooth-equiv} 
	The functor $g_{\wt{G}}$ is fully faithful. If $G/S$ is smooth, then it is an equivalence.
\end{prop}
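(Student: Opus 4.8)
The plan is to prove full faithfulness and essential surjectivity separately, in both cases reducing to facts already established in the paper about $\dt$-structures on affine/ind-affine sheaves and their behaviour under quotients.

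For full faithfulness, let $X$ and $Y$ be two $\dt_{W(S)}$-$\wt{G}$-torsors. A morphism $X\to Y$ of $\wt{G}$-torsors over $W(S)$ is necessarily an isomorphism, and the sheaf $\underline{\Isom}^{\wt{G}}_{W(S)}(X,Y)$ is a torsor under $\underline{\Aut}^{\wt{G}}_{W(S)}(Y)\cong \wt{G}$ (acting through the abelian group structure); using the identification $\underline{\Isom}^{\wt{G}}(X,Y)\cong Y\otimes_{\wt{G}}X^\vee$ from construction (iv), and the fact (via proposition~\ref{prop:lambda-quotients}) that this torsor inherits a canonical $\dt_{W(S)}$-structure, a $\dt_{W(S)}$-morphism $X\to Y$ is the same thing as a $\dt_{W(S)}$-section of $\underline{\Isom}^{\wt{G}}(X,Y)\to W(S)$. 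On the other side, a morphism $g_{\wt G}(X)\to g_{\wt G}(Y)$ of $G$-torsors over $S$ is a section of $\underline{\Isom}^{G}(g_{\wt G}X, g_{\wt G}Y)\to S$. So it suffices to show that for any $\dt_{W(S)}$-torsor $Z$ under $\wt G$ (here $Z=\underline{\Isom}^{\wt G}(X,Y)$), the map from $\dt_{W(S)}$-sections of $Z$ to $S$-sections of $Z\times_{W(S)}S$ is a bijection. A $\dt_{W(S)}$-section $S\to Z$ is, by adjunction between $W$ and the forgetful functor on $\dtSh{S}$, the same as a $\dt_{W(S)}$-morphism $W(S)\to Z$ over $W(S)$ lying over the structure map, i.e.\ a $\dt_{W(S)}$-section; and an ordinary section $S\to Z\times_{W(S)}S$ lifts uniquely by adjunction to $W(S)\to Z$; one then checks (using that $Z$ is relatively affine and flat over $W(S)$, so $p$-torsion free over a $p$-torsion-free base, or more directly using the monad structure) that this lift is automatically a $\dt_{W(S)}$-morphism and is the unique one inducing the given section. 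This gives the required bijection.

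For essential surjectivity when $G/S$ is smooth: given a $G$-torsor $P$ over $S$, I want to produce a $\dt_{W(S)}$-$\wt G$-torsor $X$ with $X\times_{W(S)}S\cong P$. The construction is $X:=W(P)\otimes_{W(G)}\wt G$, or more precisely: $W(P)$ is a $W(G)$-sheaf with a canonical $\dt_{W(S)}$-structure (as $W$ is the left adjoint/monad producing free $\dt$-objects), and $W(G)\to \wt G$ is the unique $\dt_{W(S)}$-homomorphism extending $\id_G$ (using the adjunction and that $\wt G$ carries a $\dt_{W(S)}$-structure, as in lemma~\ref{lemm:lambda-hom-etale} — but here $\wt G$ need not be étale, so instead I use directly that $W(G)\to\wt G$ is adjunct to the identity $G\to G=\wt G\times_{W(S)}S$). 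Then one forms the contracted product $(W(P)\times_{W(S)}\wt G)/W(G)$; the $W(G)$-action here is free and, since $G$ and hence $W(G)$ is smooth over $W(S)$, the quotient in fpqc sheaves agrees with the étale-sheaf quotient, which by proposition~\ref{prop:lambda-quotients}(a) inherits a unique $\dt_{W(S)}$-structure compatible with the projection. This $X$ is a $\wt G$-torsor, it has a $\dt_{W(S)}$-structure, and $X\times_{W(S)}S = (P\times_S G)/G = P$ because $W(P)\times_{W(S)}S=P$ and $W(G)\times_{W(S)}S=G$ and quotients commute with the (flat) base change along $S\to W(S)$.

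The step I expect to be the main obstacle is verifying, in the full-faithfulness argument, that the unique $W$-adjunct lift $W(S)\to Z$ of an ordinary section $S\to Z\times_{W(S)}S$ is genuinely a morphism of $\dt_{W(S)}$-sheaves (equivalently, that the induced section is compatible with the monad action of $W$), since $Z$ is not assumed étale over $W(S)$ and so lemma~\ref{lemm:lambda-hom-etale} does not apply directly; one must instead argue via the comonad/monad formalism of section~\ref{sec:rel-lambda}, using that $Z$ is relatively affine and flat over $W(S)$ and the uniqueness clause in the adjunction, or reduce to the universal ($p$-torsion-free) case using remark~\ref{rema:p-tor-free-extra}(ii) and proposition~\ref{prop:p-torsion-free-affine-frobenius-lambda}. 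A clean way to organise this is to observe that $Z$, being a torsor under the $\dt_{W(S)}$-group $\wt G$ with a $\dt_{W(S)}$-structure, is in the essential image of the free functor $W$ precisely when it is trivial, but more usefully that étale-locally on $S$ it becomes $W$ of its special fibre, reducing full faithfulness to the already-known adjunction on the underlying monad; I would spell this out carefully as the technical heart of the proof.
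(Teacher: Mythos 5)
Your full-faithfulness argument is essentially the paper's: morphisms of torsors are sections of the internal Isom torsor $\wt{X}_2\otimes_{\wt G}\wt{X}_1^\vee$ (which carries a $\dt_{W(S)}$-structure by proposition~\ref{prop:lambda-quotients}), and one compares $\dt_{W(S)}$-sections over $W(S)$ with sections of the special fibre over $S$. But the step you flag as ``the main obstacle'' is not an obstacle at all: since $W(S)$ with its canonical $\dt$-structure is the free object on $S$ for the monad $W$, the adjunction gives directly $\Hom_{\dt_{W(S)}}(W(S),Z)\cong\Hom_S(S,Z\times_{W(S)}S)$ --- the adjunct of a section is a $\dt$-morphism by the very definition of the adjunction, with no flatness or torsion-freeness input. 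The auxiliary arguments you propose there are in fact wrong or inapplicable: $S$ is $p$-adic, not $p$-torsion free; a torsor under a finite locally free $\wt G$ (case (b)) is not \'etale-locally trivial; and the trivial $\dt_{W(S)}$-torsor is $\wt G$ itself, not $W(G)$, so ``in the essential image of $W$ iff trivial'' is false. Fortunately none of this is needed.

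The genuine gap is in essential surjectivity. Your construction $X=(W(P)\times_{W(S)}\wt G)/W(G)$ presupposes that $W(G)$ is a group over $W(S)$ acting on $W(P)$ and that there is a $\dt$-homomorphism $W(G)\to\wt G$. But $W$ is a left adjoint and does not commute with fibre products except along \'etale morphisms (theorem~\ref{theo:witt-et-fibre}); for a smooth non-\'etale $G$ (e.g.\ $\mathbf{G}_m$) there is no natural map $W(G)\times_{W(S)}W(G)\to W(G\times_S G)$, so $W(G)$ carries no group structure and the contracted product is undefined. (The adjoint of $\id_G$ is a map of sheaves $W(G)\to\wt G$, not of groups.) Your final base-change claim also invokes flatness of $S\to W(S)$, which is a nilpotent immersion, not flat. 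The paper's route avoids all of this: smoothness of $G$ means every $G$-torsor is \'etale-locally trivial, so the fully faithful functor $g_{\wt G}$ is \'etale-locally essentially surjective; since both $\Tor_{\dt_{W(S)}}(\wt G)$ and $\Tor_S(G)$ are stacks for the \'etale topology, full faithfulness plus local essential surjectivity already gives the equivalence, with no explicit lifting construction required.
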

\begin{proof} 
If $\wt{X}_1$ and $\wt{X}_2$ are a pair of $\dt_{W(S)}$-$\wt{G}$-torsors, then we have
\begin{eqnarray*}
\Hom_{\dt_{W(S)}}^{\wt{G}}(\wt{X}_1, \wt{X}_2)
&=&\Hom_{\dt_{W(S)}}^{\wt{G}}(\wt{G}, \wt{X}_2\otimes_{\wt{G}}\wt{X}_1^{-1})\\
&=& \Hom_{\dt_{W(S)}}(W(S), \wt{X}_2\otimes_{\wt{G}}\wt{X}_1^{-1}))\\
&=&\Hom_S(S, X_2\otimes_{G}X_1^{-1})\\
&=&\Hom^G_S(G, X_2\otimes_{G}X_1^{-1})\\
&=&\Hom^G_S(X_1, X_2)
\end{eqnarray*} 
whence the full faithfulness.

If $G$ is smooth, then with respect to the \'etale topology, every $G$-torsor is locally trivial so that the
fully faithful functor $g_{\wt{G}}$ is locally essentially surjective. As both $\Tor_{\dt_{W(S)}}(\wt{G})$ and
$\Tor_S(G)$ are stacks for the \'etale topology it follows that $g_{\wt{G}}$ is an equivalence. 
\end{proof}

\subsubsection{} 
Consider an exact sequence of relatively flat and affine $\dt_{W(S)}$-group schemes 
	\[
	0\to \wt{K}\to \wt{G}\stackrel{\wt{f}}{\to} \wt{H},
	\] 
and denote its pull-back along the first ghost component by 
	\[
	0\to K\to G\stackrel{f}{\to} H.
	\]
We define $\Tor_{\dt_{W(S)}}[\wt{f}]$ to be the fibred category over $\Aff_S$ whose fibre over an affine $S$-scheme
$T$ is the groupoid of pairs $(\wt{X}, \wt{\rho})$ where $\wt{X}$ is a $\wt{G}$-torsor over $W(T)$ and
$\wt{\rho}\: \wt{H}\to \wt{X}\otimes_{\wt{G}}\wt{H}$ is an isomorphism of $\dt_{W(S)}$-$\wt{H}$-torsors together
with the obvious notion of morphism. We denote by $g_{\wt{f}}$ the functor 
\begin{align*}
\Tor_{\dt_{W(S)}}[\wt{f}]&\longlabelmap{g_{\wt{f}}} \Tor_S[f]\\
(\wt{X}, \wt{\rho})&\mto (X, \rho),
\end{align*}
where $\rho$ denotes the pull-back of $\wt{\rho}$ along the first ghost component map $S\to W(S)$.

\begin{prop}\label{prop:lambda-f-tor} 
	If $g_{\wt{G}}$ is an equivalence of categories, then so is $g_{\wt{f}}$.
\end{prop}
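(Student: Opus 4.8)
The plan is to verify directly that $g_{\wt{f}}$ is fully faithful and essentially surjective, drawing only on the fact that $g_{\wt{G}}$ and $g_{\wt{H}}$ are fully faithful (proposition~\ref{prop:smooth-equiv}) together with the hypothesis that $g_{\wt{G}}$ is an equivalence. The one technical ingredient used throughout is that the torsor contraction commutes with pull-back along the first ghost component: for a $\dt_{W(S)}$-$\wt{G}$-torsor $\wt{X}$, the object $\wt{X}\otimes_{\wt{G}}\wt{H}$ --- which is a $\dt_{W(S)}$-$\wt{H}$-torsor since the $\dt$-versions of the torsor constructions of section~\ref{subsec:torsor-constructions} are available (by proposition~\ref{prop:lambda-quotients}, as $\wt{G}$ acts on $\wt{X}\times_{W(S)}\wt{H}$ through a smooth or finite locally free equivalence relation) --- pulls back to the $H$-torsor $X\otimes_{G}H$, naturally in $\wt{X}$ and compatibly with the induced $H$-action. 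This holds because the contraction is a quotient of a fibre product of fpqc sheaves, and fibre products and quotients by equivalence relations commute with the base change $(-)\times_{W(S)}S$, while the $\dt$-structure produced by proposition~\ref{prop:lambda-quotients} is characterised uniquely by compatibility with the quotient map. In particular $g_{\wt{H}}$ carries $\wt{X}\otimes_{\wt{G}}\wt{H}$ to $X\otimes_{G}H$, so that $g_{\wt{f}}$ indeed lands in $\Tor_S[f]$.

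For full faithfulness, fix objects $(\wt{X}_1,\wt{\rho}_1)$ and $(\wt{X}_2,\wt{\rho}_2)$ in a fibre of $\Tor_{\dt_{W(S)}}[\wt{f}]$. A morphism between them is a morphism $\phi\colon\wt{X}_1\to\wt{X}_2$ of $\dt_{W(S)}$-$\wt{G}$-torsors with $(\phi\otimes_{\wt{G}}\wt{H})\circ\wt{\rho}_1=\wt{\rho}_2$. By full faithfulness of $g_{\wt{G}}$ such $\phi$ correspond bijectively to morphisms $\psi\colon X_1\to X_2$ of $G$-torsors, and by the compatibility above $g_{\wt{H}}$ sends $(\phi\otimes_{\wt{G}}\wt{H})\circ\wt{\rho}_1$ to $(\psi\otimes_{G}H)\circ\rho_1$; since $g_{\wt{H}}$ is faithful, the relation $(\phi\otimes_{\wt{G}}\wt{H})\circ\wt{\rho}_1=\wt{\rho}_2$ is equivalent to $(\psi\otimes_{G}H)\circ\rho_1=\rho_2$. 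Hence morphisms $(\wt{X}_1,\wt{\rho}_1)\to(\wt{X}_2,\wt{\rho}_2)$ are in natural bijection with morphisms $(X_1,\rho_1)\to(X_2,\rho_2)$ in $\Tor_S[f]$, which is the full faithfulness of $g_{\wt{f}}$. (Note this step uses only that $g_{\wt{G}}$ is fully faithful, not that it is an equivalence.)

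For essential surjectivity, since $\Tor_{\dt_{W(S)}}[\wt{f}]$ and $\Tor_S[f]$ are stacks for the \'etale topology and $g_{\wt{f}}$ is now known to be fully faithful, it suffices to lift an arbitrary object $(X,\rho)$ of $\Tor_S[f]$ \'etale-locally on the base; so we may assume $g_{\wt{G}}$ is essentially surjective. Pick a $\dt_{W(S)}$-$\wt{G}$-torsor $\wt{X}$ together with an isomorphism $\epsilon\colon\wt{X}\times_{W(S)}S\isomto X$ of $G$-torsors. Then $\wt{X}\otimes_{\wt{G}}\wt{H}$ is a $\dt_{W(S)}$-$\wt{H}$-torsor whose image under $g_{\wt{H}}$ is identified, via $\epsilon$ and then $\rho$, with the trivial $H$-torsor $H=g_{\wt{H}}(\wt{H})$. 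Since $g_{\wt{H}}$ is fully faithful and $\Tor_{\dt_{W(S)}}(\wt{H})$ and $\Tor_S(H)$ are groupoids, this isomorphism of $H$-torsors lifts to a unique isomorphism $\wt{\rho}\colon\wt{H}\isomto\wt{X}\otimes_{\wt{G}}\wt{H}$ of $\dt_{W(S)}$-$\wt{H}$-torsors whose pull-back along the first ghost component is the given one. Then $(\wt{X},\wt{\rho})$ lies in $\Tor_{\dt_{W(S)}}[\wt{f}]$, and unwinding the identifications shows that $\epsilon$ promotes to an isomorphism $g_{\wt{f}}(\wt{X},\wt{\rho})\isomto(X,\rho)$ in $\Tor_S[f]$. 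This gives essential surjectivity, hence $g_{\wt{f}}$ is an equivalence.

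The main obstacle, such as it is, is the bookkeeping behind the compatibility asserted in the first paragraph --- that $\wt{X}\otimes_{\wt{G}}\wt{H}$ commutes, \emph{as a $\dt_{W(S)}$-$\wt{H}$-torsor}, with pull-back along the first ghost component --- which is exactly where proposition~\ref{prop:lambda-quotients} and the exactness of sheaf pull-back do their work; everything else is formal manipulation with fully faithful functors. I would resist arguing instead through the equivalences $\Tor_{\dt_{W(S)}}(\wt{K})\isomto\Tor_{\dt_{W(S)}}[\wt{f}]$ and $\Tor_S(K)\isomto\Tor_S[f]$ of section~\ref{subsubsec:kernel-morphisms-torsors}, since that would reduce the statement to $g_{\wt{K}}$ being an equivalence, which is not among the hypotheses and can genuinely fail (for instance when $K$ is not smooth); the point of the argument above is that the trivialisation datum $\rho$ already supplies enough rigidity to produce the lift without lifting $\wt{K}$-torsors.
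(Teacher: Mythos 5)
Your argument is correct and is essentially the paper's own proof: full faithfulness by lifting $\theta$ through the full faithfulness of $g_{\wt{G}}$ and checking compatibility with the trivialisations through the faithfulness of $g_{\wt{H}}$, and essential surjectivity by lifting $X$ through the essential surjectivity of $g_{\wt{G}}$ and then lifting $\rho$ through the full faithfulness of $g_{\wt{H}}$. The only differences are cosmetic --- your preliminary check that $(-)\otimes_{\wt{G}}\wt{H}$ commutes with pull-back along the first ghost component makes explicit something the paper leaves implicit, and the \'etale-local reduction in the surjectivity step is redundant since $g_{\wt{G}}$ is already assumed to be an equivalence globally --- and your closing remark correctly identifies that routing through $g_{\wt{K}}$ would be circular.
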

\begin{proof} 
Let $(\wt{X}_1, \wt{\rho}_1)$ and $(\wt{X}_2, \wt{\rho}_2)$ be a pair of objects of 
$\Tor_{\dt_{W(S)}}[\wt{f}]$ over some affine $S$-scheme.
Given a morphism $\theta\: (X_1, \rho_1)\to (X_2, \rho_2)$, the full faithfulness of $g_{\wt{G}}$ implies that
there exists a unique $\wt{\theta}\: \wt{X}_1\to \wt{X}_2$ lifting $\theta$. The faithfulness of
$g_{\wt{H}}$ shows that $\wt{\theta}$ is in fact a morphism 
$\wt{\theta}\: (\wt{X}_1, \wt{\rho}_1)\to (\wt{X}_2, \wt{\rho}_2)$, and so $g_{\wt{f}}$ is fully
faithful.

Now let $(X, \rho)$ be a section of $\Tor_S[f]$. Since $g_{\wt{G}}$ is an equivalence, there exists a unique
$\wt{X}$ lifting $X$. As $g_{\wt{H}}$ is fully faithful, it follows that there exists a unique 
$\wt{\rho}\: \wt{H}\to \wt{X}\otimes_{\wt{G}}\wt{H}$ lifting $\rho$. Therefore $g_{\wt{f}}$ is essentially
surjective and hence an equivalence. 
\end{proof}

\begin{prop}
\label{prop:G-equiv-then-K}
 If $g_{\wt{G}}$ is an equivalence of categories, then so is 
	 \[
	 g_{\wt{K}}\: \Tor_{\dt_{W(S)}}(\wt{K})\to \Tor_S(K).
	 \]
\end{prop}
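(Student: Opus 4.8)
The plan is to factor $g_{\wt K}$ through the relative torsor stacks $\Tor_S[f]$ and $\Tor_{\dt_{W(S)}}[\wt f]$ and then conclude by the two-out-of-three property. Concretely, I would fit $g_{\wt K}$ into a square
\[
\xymatrix{
\Tor_{\dt_{W(S)}}(\wt K) \ar[r]\ar_-{g_{\wt K}}[d] & \Tor_{\dt_{W(S)}}[\wt f] \ar^-{g_{\wt f}}[d] \\
\Tor_S(K) \ar[r] & \Tor_S[f]
}
\]
in which the horizontal arrows are the kernel--torsor functors $X\mapsto(X\otimes_K G,\rho)$ and $\wt X\mapsto(\wt X\otimes_{\wt K}\wt G,\wt\rho)$ of~\ref{subsubsec:kernel-morphisms-torsors}. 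This square commutes up to natural isomorphism, since pulling back along the first ghost component $S\to W(S)$ commutes with the formation of contracted products. The bottom arrow is an equivalence by~\ref{subsubsec:kernel-morphisms-torsors}, and $g_{\wt f}$ is an equivalence by proposition~\ref{prop:lambda-f-tor} --- this is where the hypothesis that $g_{\wt G}$ is an equivalence enters. So it suffices to prove that the top arrow is an equivalence.

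For this I would check that the classical kernel--torsor equivalence~\ref{subsubsec:kernel-morphisms-torsors} carries over verbatim to the $\dt$-enriched setting. The contracted product $\wt X\otimes_{\wt K}\wt G$ receives a canonical $\dt_{W(S)}$-structure from proposition~\ref{prop:lambda-quotients}, applied to the free equivalence relation on $\wt X\times_{W(S)}\wt G$ defined by $\wt K$ (smooth, resp.\ finite locally free), and $\wt\rho$ is the induced $\dt_{W(S)}$-trivialisation of $(\wt X\otimes_{\wt K}\wt G)\otimes_{\wt G}\wt H=(\wt X/\wt K)\times_{W(S)}\wt H=\wt H$. The quasi-inverse sends $(\wt Y,\wt\rho)$ to the equaliser, formed in $\dtSh{W(S)}$, of the two $\dt_{W(S)}$-morphisms $\wt Y\rightrightarrows\wt Y\otimes_{\wt G}\wt H$ of~\ref{subsubsec:kernel-morphisms-torsors} (the latter contracted product again receiving a $\dt_{W(S)}$-structure from proposition~\ref{prop:lambda-quotients}, now via $\wt G$). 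This equaliser exists and is computed on underlying $W(S)$-sheaves, because $\dtSh{W(S)}$ is comonadic over $\Sh_{W(S)}$; it carries the $\wt K$-action restricted from $\wt Y$, since both maps are $\wt K$-invariant; and it is a $\wt K$-torsor, since this holds on underlying sheaves by~\ref{subsubsec:kernel-morphisms-torsors} over the base $W(S)$. That the two functors are mutually quasi-inverse then reduces, using that $\dtSh{W(S)}\to\Sh_{W(S)}$ preserves the contracted products and equalisers in sight, to the classical statement.

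The main difficulty will be this second step: making sure every ingredient of~\ref{subsubsec:kernel-morphisms-torsors} lifts to $\dt_{W(S)}$-sheaves --- that is, that the two contracted products acquire $\dt_{W(S)}$-structures compatible with all the maps in the construction (this is precisely what proposition~\ref{prop:lambda-quotients} is designed to supply, and is the reason the smoothness and finite-local-freeness hypotheses on the groups are in force), and that the equaliser quasi-inverse can be formed $\dt_{W(S)}$-equivariantly while still producing a $\wt K$-torsor, which rests on limits existing in $\dtSh{W(S)}$ and being computed on underlying sheaves. Once this bookkeeping is done, the square above yields the statement formally.
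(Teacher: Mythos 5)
Your proposal is correct and follows essentially the same route as the paper: the same $2$-commutative square with $g_{\wt{f}}$ (via proposition~\ref{prop:lambda-f-tor}) and the classical equivalence of~\ref{subsubsec:kernel-morphisms-torsors} on the bottom, concluding by two-out-of-three. Your careful verification that the top arrow is an equivalence (via proposition~\ref{prop:lambda-quotients} and computing equalisers on underlying sheaves) is exactly what the paper compresses into ``as before it is an equivalence.''
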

\begin{proof} 
As in \ref{subsubsec:kernel-morphisms-torsors}, we can define a functor 
	\[
	\Tor_{\dt_{W(S)}}(\wt{K})\to\Tor_{\dt_{W(S)}}[\wt{f}]: \wt{X}\mto (\wt{X}\otimes_{\wt{K}}\wt{G}, \wt{\rho}),
	\]
and as before it is an equivalence. We then obtain a $2$-commutative diagram
\begin{equation*}
\begin{gathered}
	\xymatrix{
	\Tor_{\dt_{W(S)}}(\wt{K})\ar[d]_{g_{\widetilde{K}}} \ar[r]^-\sim
		& \Tor_{\dt_{W(S)}}[\wt{f}]\ar[d]^{g_{\widetilde{f}}}\\ 
	\Tor_S(K) \ar[r]^-\sim & \Tor_S[f].
	}
\end{gathered}
\end{equation*} 
The top and bottom arrows are equivalences and the right arrow is an equivalence by
proposition~\ref{prop:lambda-f-tor}. So the left arrow is also an equivalence. 
\end{proof}

\subsubsection{}\label{sec:lambda-extensions} 
Let $\wt{G}$ be a flat affine $\dt_{W(S)}$-group scheme over $W(S)$ satisfying either of the conditions (a) and
(b) of  section \ref{sec:delta-torsors}, and let $H$ be an affine \'etale group scheme over $S$. 
Then proposition~\ref{prop:et-witt-equivalence} implies $\wt{H}:=W(H)$ is the unique affine \'etale $\dt_{W(S)}$-group
scheme lifting $H$.

Denote by $\Ext_{\dt_{W(S)}}(\wt{H}, \wt{G})$ the fibred category over $\Aff_{S}$ whose fibre over an affine
$S$-scheme $T$ is the groupoid of $\dt_{W(T)}$-extensions of $\wt{G}$ by $\wt{H}$, by which we mean short exact sequences (in the sense of sheaves in the fpqc topology) of $\dt_{W(T)}$-group schemes 
	\[
	0\to \wt{G}_{W(T)}\to \wt{E}\to \wt{H}_{W(T)}\to 0,
	\] 
with morphisms $\dt_{W(T)}$-morphisms inducing the identity on $\wt{G}$ and $\wt{H}$. Denote by
	\[
	\sHom_{S}^\otimes(H, \Tor_{\dt_{W(S)}}(\wt{G}))
	\] 
the fibred category whose fibre over an affine $S$-scheme $T$ is the groupoid of symmetric monoidal functors
(over $\Aff_S$) 
	\[
	\eta\: \Aff_{H_T}\to \Tor_{\dt_{W(S)}}(\wt{G}).
	\] 
Such a functor $\eta$ is determined up to equivalence by its value at the universal point
	$$
	\wt{E}=\eta(H_T\to H_T),
	$$ 
which is a $\dt_{W(H_T)}$-$\wt{G}$-torsor $\wt{E}$ over $W(H_T)$, together with the 
$\dt_{W(H_T\times_T H_T)}$-$\wt{G}$-isomorphism 
	\begin{equation} 
		W(p_1)^*(\wt{E})\otimes_{\wt{G}} W(p_2)^*(\wt{E})\isomto
		W(\mu)^*(\wt{E}),
		\label{dia:lambda-isom-extension}
	\end{equation}
coming from the monoidal structure on $\eta$.

By theorem~\ref{theo:witt-et-fibre}, we have 
	\[
	W(H_T\times_T H_T) = W(H_T)\times_{W(T)}W(H_T) = \wt{H}_{W(T)}\times_{W(T)}\wt{H}_{W(T)}.
	\] 
Under this identification $W(p_1)$, $W(p_2)$ and $W(\mu)$ correspond to the usual projections and multiplication
map $\wt{H}_{W(T)}\times_{W(T)}\wt{H}_{W(T)}\to \wt{H}_{W(T)}$ respectively, and the isomorphism
(\ref{dia:lambda-isom-extension}) corresponds to a 
$\dt_{\wt{H}_{W(T)}\times_{W(T)} \wt{H}_{W(T)}}$-$\wt{G}$-isomorphism 
	\[
	p_1^*(\wt{E})\otimes_{\wt{G}}p_2^*(\wt{E}) \isomto \mu^*(\wt{E}).
	\] 
This induces the structure of a $\dt_{W(T)}$-extension of $\wt{H}_{W(T)}$ by $\wt{G}_{W(T)}$ on $\wt{E}$ and so
(as for usual torsors) we obtain an equivalence of stacks 
	\[
	\sHom_S^\otimes(H, \Tor_{\dt_{W(S)}}(\wt{G})) \to \Ext_{\dt_{W(S)}}(\wt{H}, \wt{G}).
	\]

\begin{theo}\label{theo:ext-equivalence} If $g_{\wt{G}}$ is an equivalence, then \[\Ext_{\dt_{W(S)}}(\wt{H}, \wt{G})\to \Ext_S(H, G)\] is also an equivalence. 
\end{theo}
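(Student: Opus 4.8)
The plan is to deduce this from the already-established equivalence at the level of torsors, using the dictionary between extensions and symmetric monoidal functors into stacks of torsors. Concretely, both sides of the claimed equivalence have been identified with Hom-stacks: on the $\dt$-side we have just seen
\[
\Ext_{\dt_{W(S)}}(\wt{H}, \wt{G}) \simeq \sHom_S^\otimes(H, \Tor_{\dt_{W(S)}}(\wt{G})),
\]
and the classical counterpart (recalled in section~\ref{subsec:torsor-constructions}) gives
\[
\Ext_S(H, G) \simeq \sHom_S^\otimes(H, \Tor_S(G)).
\]
So the first step is to reduce the theorem to showing that post-composition with $g_{\wt{G}}$ induces an equivalence
\[
\sHom_S^\otimes(H, \Tor_{\dt_{W(S)}}(\wt{G})) \to \sHom_S^\otimes(H, \Tor_S(G)),
\]
compatibly with the two equivalences above (the compatibility is a matter of unwinding the constructions, since in both cases the functor sends an extension to $T\mapsto \wt{E}\times_H T$ or $E\times_H T$, and $g_{\wt{G}}$ is defined by $X\mapsto X\times_{W(S)}S$).

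Next I would argue that post-composing $\sHom^\otimes$ with an equivalence of symmetric monoidal stacks is again an equivalence. Here the hypothesis is exactly that $g_{\wt{G}}$ is an equivalence, and it is visibly symmetric monoidal (the monoidal structures on both torsor stacks are given by the contracted product $\otimes_{(-)}$, and base change along the first ghost component $W(S)\to S$ commutes with these quotients). Since a symmetric monoidal equivalence of target stacks induces an equivalence on categories of symmetric monoidal functors out of any fixed symmetric monoidal source — functorially and compatibly with the domain $\Aff_H$ — this gives the desired equivalence of Hom-stacks. One should do this fibrewise over each affine $S$-scheme $T$, replacing $S$ by $T$, $H$ by $H_T$, and using that $g_{\wt{G}}$ base-changes well; the stack condition on both sides (which holds for the \'etale topology) then lets one glue.

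The main obstacle I anticipate is the bookkeeping needed to check that the equivalence $g_{\wt{G}}$ really is symmetric monoidal in a way that interacts correctly with \emph{both} monoidal structures in play: the contracted-product monoidal structure on the torsor stacks (needed to talk about symmetric monoidal functors at all) and the Cartesian-type monoidal structure on $\Aff_H$ coming from the group law on $H$. In particular, one must verify that the identification $W(H_T\times_T H_T)=\wt H_{W(T)}\times_{W(T)}\wt H_{W(T)}$ from theorem~\ref{theo:witt-et-fibre}, used above to transport the monoidal constraint (\ref{dia:lambda-isom-extension}) across, is compatible with base change along the first ghost component — i.e.\ that the square relating $W(\mu)$, $W(p_i)$ to $\mu$, $p_i$ after pulling back to $S$ commutes. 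This is where propositions~\ref{prop:lambda-quotients} and~\ref{prop:smooth-equiv}/\ref{prop:lambda-f-tor} and the fibre-product properties of $W_n$ get used; once that compatibility is in hand, the rest is a formal $2$-categorical argument about Hom-stacks, and full faithfulness plus essential surjectivity of the induced functor follow immediately from those of $g_{\wt{G}}$.
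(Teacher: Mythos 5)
Your proposal is correct and is essentially the paper's own argument: the paper proves the theorem via the $2$-commutative square comparing $\sHom^{\otimes}_S(H,\Tor_{\dt_{W(S)}}(\wt{G}))$ and $\sHom^{\otimes}_S(H,\Tor_S(G))$ with the two extension stacks, using the equivalences of~\ref{subsec:extensions-torsors} and~\ref{sec:lambda-extensions} for the horizontal arrows and post-composition with the symmetric monoidal equivalence $g_{\wt{G}}$ for the left vertical arrow. The compatibility issues you flag (e.g.\ $W(H_T\times_T H_T)=\wt{H}_{W(T)}\times_{W(T)}\wt{H}_{W(T)}$ via theorem~\ref{theo:witt-et-fibre}) are exactly what the paper handles in the preparatory subsection~\ref{sec:lambda-extensions} rather than inside the proof itself.
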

\begin{proof} 
We have the $2$-commutative diagram 
\[
\xymatrix{
\sHom^{\otimes}_S(H, \Tor_{\dt_{W(S)}}(\wt{G}))\ar[r]^-\sim\ar[d]_\wr 
	& \Ext_{\dt_{W(S)}}(\wt{H}, \wt{G}) \ar[d]\\ 
\sHom^{\otimes}_S(H, \Tor_S(G))\ar[r]^-\sim 
	& \Ext_S(H, G)
}
\] 
in which the left arrow (composition with the equivalence $g_{\wt{G}}$) is an equivalence. 
The bottom arrow is an equivalence, as explained in~\ref{subsec:extensions-torsors}. Similarly,
the top arrow is an equivalence, as just explained in~\ref{sec:lambda-extensions}. 
\end{proof}

\begin{rema}
	This theorem can be viewed as a derived analogue of lemma~\ref{lemm:lambda-hom-etale}.
\end{rema}

\begin{coro}\label{cor:ext-equivalence}
	Let $H$ be an affine \'etale group scheme over $S$, and let $\wt{G}$ be a flat and affine $\dt_{W(S)}$-group
	scheme satisfying \textup{(a)} or \textup{(b)} of section~\ref{sec:delta-torsors}.
	If $\wt{G}$ is isomorphic to the kernel of a $\dt_{W(S)}$-homomorphism of flat and affine $\dt_{W(S)}$-group 
	schemes	$\wt{G_1}\to\wt{G_2}$ with $G_1$ smooth over $S$, then the functor
		\[
		\Ext_{\dt_{W(S)}}(\wt{H}, \wt{G})\to \Ext_S(H, G)
		\]
	is an equivalence.
\end{coro}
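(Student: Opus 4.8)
The plan is to reduce the corollary to Theorem~\ref{theo:ext-equivalence}: since $H$ is affine \'etale over $S$ and $\wt{G}$ satisfies~\textup{(a)} or~\textup{(b)} of section~\ref{sec:delta-torsors}, that theorem says it is enough to prove that the functor $g_{\wt{G}}\colon\Tor_{\dt_{W(S)}}(\wt{G})\to\Tor_S(G)$ is an equivalence. I would establish this in two steps: first, that $g_{\wt{G_1}}$ is an equivalence, via Proposition~\ref{prop:smooth-equiv}; and then, that this transports to the kernel $\wt{G}$, via Proposition~\ref{prop:G-equiv-then-K}.

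The first step rests on the remark that smoothness of $G_1$ over $S$ already forces $\wt{G_1}$ to be smooth over $W(S)$. Indeed, $S$ is ind-affine and $p$-adic, so by Proposition~\ref{prop:ghost-nil-immersion} the immersion $S=W_0(S)\hookrightarrow W_n(S)$ is nilpotent for every $n$; it is therefore a homeomorphism of underlying spaces and an isomorphism on residue fields, so the fibres of $\wt{G_1}\to W(S)$ are precisely the fibres of $G_1\to S$, and in particular smooth. As $\wt{G_1}$ is flat and locally of finite presentation over $W(S)$, it is thus smooth over $W(S)$ and so satisfies condition~\textup{(a)} of section~\ref{sec:delta-torsors}; since moreover $G_1$ is smooth over $S$, Proposition~\ref{prop:smooth-equiv} applies and $g_{\wt{G_1}}$ is an equivalence.

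For the second step I would apply Proposition~\ref{prop:G-equiv-then-K} to an exact sequence $0\to\wt{G}\to\wt{G_1}\to\wt{G_2}$. One bookkeeping point has to be dealt with first: that proposition, through Proposition~\ref{prop:lambda-f-tor}, also needs the target group to satisfy~\textup{(a)} or~\textup{(b)}, which the given $\wt{G_2}$ need not. I would therefore replace $\wt{G_2}$ by the quotient $\wt{G_1}/\wt{G}$. This is an affine group scheme over $W(S)$ (descent of affineness, as in the use of \cite{SGA1} in the proof of Proposition~\ref{prop:lambda-quotients}); it is smooth over $W(S)$, being the quotient of the smooth group $\wt{G_1}$ by the flat closed normal subgroup $\wt{G}$; and it carries a canonical $\dt_{W(S)}$-structure by Proposition~\ref{prop:lambda-quotients} applied to the translation action of $\wt{G}$ on $\wt{G_1}$, which is a smooth or finite locally free $\dt_{W(S)}$-equivalence relation according as $\wt{G}$ satisfies~\textup{(a)} or~\textup{(b)}. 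Since $\wt{G}$ remains the kernel of $\wt{G_1}\to\wt{G_1}/\wt{G}$, we may assume $\wt{G_2}$ is smooth over $W(S)$. Proposition~\ref{prop:G-equiv-then-K}, applied to the exact sequence $0\to\wt{G}\to\wt{G_1}\to\wt{G_2}$ of flat affine $\dt_{W(S)}$-group schemes together with the equivalence $g_{\wt{G_1}}$ from the first step, then gives that $g_{\wt{G}}$ is an equivalence, and Theorem~\ref{theo:ext-equivalence} yields the corollary.

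Apart from the first step, everything above is a formal concatenation of the results of section~\ref{sec:delta-torsors}, so the only content-bearing point I expect is the passage from $G_1$ smooth over $S$ to $\wt{G_1}$ smooth over $W(S)$ --- and even that is routine once Proposition~\ref{prop:ghost-nil-immersion} is brought in. The replacement of $\wt{G_2}$ by $\wt{G_1}/\wt{G}$ costs nothing and is the only other thing one has to watch.
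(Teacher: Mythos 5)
Your core argument is exactly the paper's: the published proof is nothing more than the concatenation of proposition~\ref{prop:smooth-equiv} applied to $\wt{G_1}$ (whose hypothesis is precisely smoothness of $G_1$ over $S$), proposition~\ref{prop:G-equiv-then-K}, and theorem~\ref{theo:ext-equivalence}, so your reduction is the intended one. The extra bookkeeping you add goes beyond what the paper records, and while it addresses a real looseness (the standing hypotheses (a)/(b) of section~\ref{sec:delta-torsors} for $\wt{G_1}$ and $\wt{G_2}$ are not literally among the corollary's assumptions), two of your justifications are soft. First, the fibrewise criterion you use to get smoothness of $\wt{G_1}$ over $W(S)$ needs $\wt{G_1}$ locally of finite presentation over $W(S)$, which is not hypothesised; it does hold, but only after the separate (easy, flatness-based) observation that finite presentation propagates from $G_1/S$ across the nilpotent thickenings $S\to W_n(S)$. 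Second, in the case where $\wt{G}$ is smooth rather than finite locally free, the affineness of $\wt{G_1}/\wt{G}$ is not a consequence of ``descent of affineness'': the \cite{SGA1} result invoked in proposition~\ref{prop:lambda-quotients} is specific to finite locally free equivalence relations, and affineness does not pass to the target of a smooth surjection from an affine scheme, so that part of your patch is unjustified as written (it is harmless for the paper's application, where $\wt{G}$ is finite locally free and $\wt{G_1},\wt{G_2}$ are smooth). None of this touches the main line of your argument, which coincides with the paper's.
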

\begin{proof}
	This follows immediately from proposition~\ref{prop:G-equiv-then-K}, proposition~\ref{prop:smooth-equiv}, 
	and theorem~\ref{theo:ext-equivalence}.
\end{proof}

\section{Canonical lifts of ordinary $p$-groups}

\subsection{$\dt$-structures on \'etale and multiplicative $p$-groups}
\label{sec:ordinary-groups} By a $p$-group over a $p$-adic sheaf $S$ we mean any finite locally free group scheme $G$ locally of $p$-power degree. We say that $G$ is \'etale if $G\to S$ is \'etale, and we say that $G$ is multiplicative if $G$ is the Cartier dual of an \'etale $p$-group.

By an ordinary $p$-group over $S$ we mean a $p$-group $E$ for which there exists a short exact sequence of 
sheaves of groups 
	\[
	0\to G\to E\to H\to 0
	\]
where $G$ is a multiplicative $p$-group scheme and $H$ an \'etale $p$-group scheme. (See 
remark~\ref{rema:ordinary} for an alternative characterisation.)

If $E$ is an ordinary $p$-group over $S$, then $G$ and $H$ are uniquely determined (as sub and quotient groups of $E$) and are compatible with base change. Indeed, the inclusion $G\to E$ is both an open and closed immersion (being the pull back of the open and closed immersion $S\to H$ along $E\to H$) and is therefore smallest open and closed sub-sheaf containing the identity section of $E$. We will write $G=E_\mul$ and $H=E_\et$ and note that the assignments $E\mto E_\mul$ and $E\mto E_\et$ are functorial in ordinary $p$-groups $E$ to multiplicative and \'etale $p$-groups respectively.

It follows from the above that if $E$ is a $p$-group over $S$ which is fpqc locally an ordinary $p$-group then it is itself an ordinary $p$-group and the fibred category $\Ord$ over $\Aff_{\Spf(\Z_p)}$, whose fibre over each $p$-adic affine scheme $S$ is the groupoid of ordinary $p$-groups, is a stack for the fpqc topology.

We note that every homomorphism $G\to E$, from a multiplicative $p$-group to an ordinary $p$-group, factors
through the inclusion $E_\mul\to E.$ In particular, there are no non-trivial homomorphisms from a multiplicative
$p$-group to an \'etale $p$-group.

\begin{rema}\label{rema:ordinary} 
A $p$-group $G$ over $S$ is ordinary in the sense above if and only if:
	\begin{enumerate}[label=(\roman*)]
	\item for all $s\in S$ the $p$-group $G_s$ is ordinary, and
	\item the degree of $(G_s)_{\mathrm{mult}}$ is a locally constant function on $S.$
	\end{enumerate}
As follows from lemma 4.8 of \cite{Messing72}. In particular, if $A/S$ is an abelian scheme such that $A_s$ is 
ordinary for all $s\in S$ then $A[p^n]$ is ordinary for all $n\geq 0.$
\end{rema}

\begin{prop}\label{prop:lambda-etale-mult-groups} 
Let $S$ be $p$-adic sheaf.
	\begin{enumerate}[label=\textup{(\roman*)}]
	\item Every \'etale (resp.\ multiplicative) $p$-group over $W(S)$ admits a unique $\dt_{W(S)}$-structure 
		compatible with its group structure.
	\item Every homomorphism of such groups is a $\dt_{W(S)}$-homomorphism.
	\item The category of \'etale (resp.\ multiplicative) $\dt_{W(S)}$-$p$-groups is equivalent, via base change 
		along the first ghost component, to the category of \'etale (resp.\ multiplicative) $p$-groups over $S$.
	\end{enumerate}
\end{prop}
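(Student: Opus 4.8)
The plan is to treat the \'etale case first and deduce the multiplicative case by Cartier duality. For the \'etale case, the key observation is that an \'etale $p$-group $\wt H$ over $W(S)$ is a relatively representable \'etale algebraic space over $W(S)$, so proposition~\ref{prop:et-witt-equivalence} applies: the base-change functor $\Et^{\dt}_{W(S)}\to\Et_S$ is an equivalence with quasi-inverse $W$. First I would use this to see that $\wt H$ is necessarily of the form $W(H)$ for $H=\wt H\times_{W(S)}S$, and that $W(H)$ carries a canonical $\dt_{W(S)}$-structure (it is a $\dt$-sheaf by construction, lying over $W(S)$). The group structure maps $\mu_H,e_H,\iota_H$ on $H$ pull back under $W$ to $\dt_{W(S)}$-morphisms, because $W$ commutes with the relevant fibre products by theorem~\ref{theo:witt-et-fibre}(iii) (or by the \'etale-fibre-product statement in proposition~\ref{prop:et-witt-equivalence}); so $W(H)=\wt H$ inherits a $\dt_{W(S)}$-group structure compatible with the given one. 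This proves existence in part~(i), and the uniqueness is exactly the full faithfulness half of proposition~\ref{prop:et-witt-equivalence} applied to the group objects, together with the uniqueness of the $\dt$-structure on an object of $\Et^{\dt}_{W(S)}$ lying over a fixed object of $\Et_S$. Part~(ii) for \'etale groups is then immediate: a homomorphism $\wt H\to\wt H'$ of \'etale $\dt_{W(S)}$-$p$-groups is in particular a morphism in $\Et^{\dt}_{W(S)}$, and by the equivalence it is determined by (indeed equals $W$ applied to) the induced map $H\to H'$ on the base changes, which is automatically a homomorphism; conversely any $S$-homomorphism $H\to H'$ has $W$ applied to it a $\dt_{W(S)}$-homomorphism, again because $W$ respects \'etale fibre products. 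This can also be read off directly from lemma~\ref{lemm:lambda-hom-etale}. Part~(iii) for the \'etale case is then a formal repackaging: proposition~\ref{prop:et-witt-equivalence} restricted to the (full) subcategories of \'etale $p$-groups on both sides, using (i) and (ii) to see that the group-object structure is transported faithfully and that every \'etale $p$-group over $S$ is in the essential image.

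For the multiplicative case I would argue by Cartier duality. If $\wt G$ is a multiplicative $p$-group over $W(S)$, then by definition its Cartier dual $\wt G^\vee$ is an \'etale $p$-group over $W(S)$, to which the \'etale case applies: $\wt G^\vee$ has a unique compatible $\dt_{W(S)}$-structure, namely $W(G^\vee)$ where $G^\vee=\wt G^\vee\times_{W(S)}S$. The point to check is that Cartier duality is compatible with $\dt_{W(S)}$-structures, i.e.\ that the internal $\Hom$ into $\mathbf{G}_m$ over $W(S)$ inherits a $\dt_{W(S)}$-structure from that on $\wt G^\vee$ and on $\mathbf{G}_{m,W(S)}$, and dually. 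Here one uses that $\mathbf{G}_{m,W(S)}$ is a $\dt_{W(S)}$-group (it carries the $\dt$-structure with $\varphi(x)=x^p$, i.e.\ $\delta(x)=0$ in suitable coordinates, prolonged to the Witt-vector base), and that the formation of Cartier duals of finite locally free group schemes is a limit-type construction which, by proposition~\ref{prop:fin-loc-lambda} and the functoriality of $\dt$-structures under finite-locally-free constructions, transports $\dt_{W(S)}$-structures. Concretely, $\wt G=\underline{\Hom}_{W(S)}(\wt G^\vee,\mathbf{G}_{m,W(S)})$, and a $\dt_{W(S)}$-structure on $\wt G^\vee$ together with the canonical one on $\mathbf{G}_m$ induces one on this Hom-group; uniqueness follows from uniqueness on $\wt G^\vee$ by biduality. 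Parts~(ii) and~(iii) in the multiplicative case then follow from the \'etale case by applying Cartier duality to homomorphisms and to the equivalence of categories, since duality is an (anti-)equivalence between multiplicative and \'etale $p$-groups over any $p$-adic base and, as just noted, is compatible with $\dt_{W(S)}$-structures on both sides.

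I expect the main obstacle to be the compatibility of Cartier duality with $\dt_{W(S)}$-structures --- i.e.\ verifying that the canonical $\dt$-structure on $\mathbf{G}_{m,W(S)}$ interacts correctly with the duality pairing so that $\underline{\Hom}_{W(S)}(\vbl,\mathbf{G}_m)$ sends \'etale $\dt_{W(S)}$-$p$-groups to multiplicative ones and is inverse to itself on $\dt$-structures. One clean way to sidestep the explicit pairing computation is to invoke proposition~\ref{prop:fin-loc-lambda}: for $S=\Spec(A)$ affine, $\dt_{W(S)}$-structures on a finite locally free $W(S)$-scheme correspond to $\dt_{W(A)}$-structures on the corresponding finite locally free $W(A)$-algebra, and Cartier duality on the algebra side (Hopf-algebra duality over $W(A)$) manifestly preserves $\dt_{W(A)}$-structures because $\delta_{W(A)}$ acts on the relevant Hopf algebras functorially and the dual Hopf algebra is built from finite (co)products for which $\delta$ satisfies the Leibniz rules; one then descends from affine $S$ to general $p$-adic $S$ by the sheaf property of $\dt$-structures (lemma~\ref{lemm:lambda-structures-etale}) and the fact that everything in sight is compatible with base change. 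Granting that, the rest of the argument is a formal consequence of proposition~\ref{prop:et-witt-equivalence} and duality, with no further serious content.
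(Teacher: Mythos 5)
Your étale half is essentially the paper's argument (proposition~\ref{prop:et-witt-equivalence}, lemma~\ref{lemm:lambda-hom-etale}), and it is fine. The multiplicative half, however, has a genuine gap at exactly the point you flag: the claim that Cartier duality "manifestly" transports $\dt_{W(A)}$-structures across Hopf-algebra duality. A $\dt$-structure is the non-additive, non-$W(A)$-linear operator $\delta$ (equivalently a coaction $B\to W_1(B)$), and there is no canonical way to push such a structure through the linear duality $B\mapsto \Hom_{W(A)}(B,W(A))$: the Leibniz rules do not make $\delta$ compatible with forming linear duals. Even at the cruder level of Frobenius lifts the dictionary goes the wrong way: dualising a homomorphic Frobenius lift $\wt{G}^\vee\to\varphi_{W(S)}^*(\wt{G}^\vee)$ produces a Verschiebung-type lift $\varphi_{W(S)}^*(\wt{G})\to\wt{G}$, not a Frobenius lift on $\wt{G}$ (this is precisely the phenomenon the paper has to work around in its duality section for abelian schemes, where one can use $\varphi\circ v=p$). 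And even if one produced a Frobenius lift, that would not suffice here: the coordinate rings of these finite group schemes are killed by powers of $p$, so a Frobenius lift neither determines nor yields a $\dt$-structure. So both existence and, more importantly, uniqueness in the multiplicative case remain unproved in your outline, and the statement you want to quote (duality compatibility of $\dt$-structures on $p$-groups) is essentially equivalent to the proposition itself.

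The paper avoids duality of $\dt$-structures altogether. It uses Cartier duality only to see that a multiplicative $p$-group over $W(S)$ is the unique multiplicative lift of its restriction to $S$, hence étale-locally of the form $\prod_i\mu_{p^{n_i}}$; then, by lemma~\ref{lemm:lambda-structures-etale} and proposition~\ref{prop:fin-loc-lambda}, it reduces to the Hopf algebra $R=W(A)[T_1,\dots,T_r]/(T_i^{p^{n_i}}-1)$ and computes directly: writing $\delta_R(T_i)=\sum_I a_{I,(i)}T^I$ and imposing compatibility with the comultiplication, comparison of the coefficients of $T^I\otimes T^I$ gives $a_{I,(i)}(1\pm p\,a_{I,(i)})=0$, whence $a_{I,(i)}=0$ because $p$ is topologically nilpotent in $W(A)$; part (ii) is then handled with the orthogonal-idempotent lemma~\ref{lemm:orth-idem-p-adic}. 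Some explicit argument of this kind (or a genuine proof of $\dt$-compatibility of the duality pairing with the canonical $\dt$-structure on $\mathbf{G}_{\mathrm{m}}$) is needed; your sketch currently contains no substitute for it.
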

\begin{proof} 
(i): For \'etale groups this follows from proposition \ref{prop:et-witt-equivalence}.

For multiplicative groups, we first note that there is one and only one way to lift a multiplicative group $G/S$
to a multiplicative group $\wt{G}/W(S)$ (forgetting about $\dt_{W(S)}$-structures). Indeed, the only possible
option is to take the Cartier dual of the unique lift of the Cartier dual of $G$ (which is an \'etale group). In
particular, it follows that every multiplicative $p$-group $\wt{G}/W(S)$ is, after base change along a morphism
$W(S')\to W(S)$ with $S'\to S$ \'etale and surjective, of the form 
\begin{equation} 
	\prod_{i=1}^r \mu_{p^{n_i}}.
	\label{eqn:group-form}
\end{equation} 
Using the \'etale local nature of $\dt_{W(S)}$-structures on sheaves over $W(S)$, as given by lemma
\ref{lemm:lambda-structures-etale}, we may assume that $\wt{G}$ is of the form (\ref{eqn:group-form}) and show
that it admits a unique $\dt_{W(S)}$-structure compatible with its group structure. It is enough to show
existence and uniqueness locally in the \'etale topology so that we may also assume that $S=\Spec(A)$ is affine.
By proposition \ref{prop:fin-loc-lambda} we can instead show that the $W(A)$-Hopf algebra 
\[
R:=\bigotimes_{i=1}^r W(A)[T]/(T^{p^{n_i}}-1)=W(A)[T_1, \ldots, T_r]/(T_1^{p^{n_1}}-1, \ldots, T_1^{p^{n_r}}-1)
\] 
admits a unique $\dt_{W(A)}$-structure compatible with its $W(A)$-Hopf algebra structure. It is clear that $R$
admits at least one such $\dt_{W(A)}$-structure given by $\delta_R(T_i)=0$ for $i=1, \ldots, r$. So we will show
that this is the only one.

Consider a $\dt_{W(A)}$-structure $\dt_R$ on $R$. Set 
$P=\Pi_{i=1}^r \{0,\ldots, p^{n_i}-1\}$ and for $I=(a_1, \ldots, a_r)\in P$ write $T^I=T_1^{a_1}\cdots T_r^{a_r}$
so that $\{T^I: I\in P\}$ forms a basis for $R$ as $W(A)$-module.
The operator $\delta_R$ is completely determined by its values 
	\[
	\delta_R(T_i)=\sum_{I\in P}a_{I, (i)} T^I
	\] 
on the elements $T_i$ for $1\leq i\leq r$. The compatibility of $\dt_R$
with the Hopf algebra structure on $R$ is by definition equivalent to the equality of set maps
\[
\Delta_R\circ\delta_R=\delta_{R\otimes_{W(A)} R}\circ \Delta_R,
\] 
which is in turn equivalent to the equalities for $1\leq i\leq r$: 
\[
\Delta_R(\delta_R(T_i)) = \delta_{R\otimes_{W(A)} R}(\Delta_R(T_i)).
\]
These equalities expand to
\[
\Delta_R(\sum_{I\in P}a_{I, (i)} T^I) = \delta_{R\otimes_{W(A)} R}(T_i\otimes T_i)
\]
and then, applying the product rule to $\delta_{R\otimes_{W(A)} R}((T_i\otimes 1)(1\otimes T_i))$, to
\[
\sum_{I\in P}a_{I, (i)} T^I\otimes T^I
=\sum_{I\in P} a_{I, (i)}\left(T^p_i\otimes T^I+T^I\otimes T^p_i\right)
+\sum_{I, J\in P} p a_{I,(i)} a_{J, (i)} T^I\otimes T^J.
\] 
Equating the coefficients of $T^I\otimes T^I$ we find that $a_{I,(i)}=p a_{I,(i)}^2$ unless 
$I = (0,\ldots, p, \ldots, 0)$ with $p$ in the $i$th position, where we find that 
$a_{I, (i)}=2a_{I,(i)}+p a_{I, (i)}^2.$ Regardless, we find that $a_{I, (i)}(1\pm pa_{I, (i)})=0$ which, as $p$
is topologically nilpotent, in $W(R)$ shows that $a_{I, (i)}=0.$ Hence $\delta_R(T_i)=0$ for $1\leq i\leq r$ and
the only $\dt_{W(A)}$-structure on $R$ compatible with its Hopf algebra structure is the one with $\delta(T_i)=0$.

(ii): By additivity we are reduced to showing that all $W(S)$-group scheme homomorphisms \[\mu_{p^m}\to \mu_{p^n}\] are $\dt_{W(S)}$-morphisms or, again using proposition~\ref{prop:fin-loc-lambda}, that every Hopf algebra homomorphism \[\theta\: W(A)[T]/(T^{p^n}-1)\to W(A)[T]/(T^{p^m}-1)\] is a $\dt_{W(A)}$-homomorphism. As $\delta(T)=0$ (on both sides) it is enough to show that $\delta(\theta(T))=0$.

As $\theta$ is a Hopf algebra homomorphism, we must have \[\theta(T)=\sum_{i=1}^{p^n}a_iT^i\] where the 
$a_i\in W(A)$ are orthogonal idempotents (i.e. $a_i a_j = \delta_{ij} a_i$ and $a_i^2=a_i$). As
$W(A)[T]/(T^{p^n}-1)$ is $p$-adically complete and the $a_i$ are orthogonal and idempotent, it follows from
lemma \ref{lemm:orth-idem-p-adic} below that 
\[
\delta(\theta(T))=\delta\left(\sum_{i=1}^{p^n}a_i T^i\right)
=\sum_{i=1}^{p^n}\delta(a_i T^i)=\sum_{i=1}^{p^n}a_i\delta(T^i)=0
\]
(as $\delta(T)=0$, it follows immediately that $\delta(T^i)=0$). Therefore, $\theta$ is a
$\dt_{W(A)}$-homomorphism. 

(iii): Since the functor is essentially surjective, by (i), it is enough to prove it is full and faithful. The
category \'etale $p$-groups over $W(S)$ is equivalent, by proposition \ref{prop:et-witt-equivalence}, to that
over $S$. The same is true for multiplicative $p$-groups by Cartier duality. Therefore for any \'etale (resp.\
multiplicative) $p$-groups $G$ and $H$ over $W(S)$, every homomorphism $G\times_{W(S)}S\to H\times_{W(S)}S$ lifts
to a unique homomorphism $G\to H$. By (ii), it is necessarily a $\dt_{W(S)}$-morphism. 
\end{proof}

\begin{lemm}\label{lemm:orth-idem-p-adic} Let $R$ be a $p$-adically complete $\delta$-ring.
\begin{enumerate}[label=\textup{(\roman*)}]
\item If $r\in R$ is idempotent, then $\delta(r)=0$ and $\delta_R(rr')=r\delta(r')$ for all $r'\in R$.
\item If $r_1, r_2\in R$ satisfy $r_1r_2=0$, then $\delta(r_1+r_2)=\delta(r_1)+\delta(r_2)$.
\end{enumerate}
\end{lemm}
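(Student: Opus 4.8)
The plan is to treat the two parts of the lemma separately. Part (ii) follows immediately from the additive Leibniz rule (1) and uses no completeness hypothesis, so I would dispose of it first: applying (1) to $\delta(r_1+r_2)$, every monomial $r_1^i r_2^{p-i}$ appearing in the correction sum has $1\le i\le p-1$, hence is divisible by the product $r_1 r_2=0$ and vanishes; thus $\delta(r_1+r_2)=\delta(r_1)+\delta(r_2)$.

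For part (i), write $d=\delta(r)$. Since $r$ is idempotent we have $r^p=r$, so applying the multiplicative rule (2) to $\delta(r\cdot r)=\delta(r)$ gives $d=2rd+pd^2$, which I would rearrange as $d(1-2r)=pd^2$. The key observation is that $1-2r$ is a unit, since $(1-2r)^2=1-4r+4r^2=1$; hence $d=p(1-2r)d^2$. In particular $d\in pR$, and inductively, if $d\in p^kR$ then $d^2\in p^{2k}R$, so $d=p(1-2r)d^2\in p^{2k+1}R\subseteq p^{k+1}R$. Therefore $d$ lies in $\bigcap_n p^nR$, which is zero because $R$ is $p$-adically complete (hence separated). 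This proves $\delta(r)=0$, and the remaining assertion of (i) is then automatic: (2) gives $\delta(rr')=r^p\delta(r')+\delta(r)(r')^p+p\delta(r)\delta(r')=r\delta(r')$, using $r^p=r$ and $\delta(r)=0$.

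The only non-routine step is spotting, in part (i), that $1-2r$ is invertible — one could instead multiply $d=2rd+pd^2$ by $r$ and by $1-r$ and exploit that $1+pd$ is a unit in the $p$-adically complete ring $R$ — after which $p$-adic completeness finishes the argument. So I do not expect any serious obstacle; the content is entirely in that small algebraic trick together with the completeness bootstrap.
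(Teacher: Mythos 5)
Your proof is correct and follows essentially the same route as the paper: part (ii) is the addition law with all cross terms killed by $r_1r_2=0$, and part (i) rests on the same identity $\delta(r)=2r\delta(r)+p\delta(r)^2$ together with the observation that $2r-1$ is a unit. The only immaterial difference is the last step, where the paper notes directly that $2r-1+p\delta(r)$ is a unit by $p$-adic completeness, while you bootstrap $\delta(r)\in\bigcap_n p^nR=0$; both uses of completeness are legitimate.
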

\begin{proof} (i) As $\delta(rr')=r^p \delta(r')+r'^p\delta(r)+p\delta(r)\delta(r')$ and $r^p=r$ it is enough to show that $\delta(r)=0$. For this, note that $\delta(r)=\delta(r^2)=2r^p\delta(r)+p\delta(r)^2$ and hence $\delta(r)(2r-1+p\delta(r))=0$. But $2r-1$ is a unit as $(2r-1)^2=1$ and hence so is $2r-1+p\delta(r)$ as $R$ is $p$-adically complete. Therefore $\delta(r)=0.$

(ii) This follows immediately from the addition law for $\delta$.
\end{proof}

\subsubsection{} 
Given (for now) a multiplicative or \'etale $p$-group $G$ over $S$,
we refer to the unique $\dt_{W(S)}$-$p$-group over $W(S)$ lifting $G$ 
as its canonical lift. We will typically denote it $\wt{G}$.
This does conflict with our earlier convention of allowing $\wt{G}$ to denote any lift of $G$ to $W(S)$,
but we believe this will not cause any confusion.

\subsection{$\dt$-structures on ordinary $p$-groups}
\label{sec:ordinary-lambda-groups} 
By an ordinary $\dt_{W(S)}$-$p$-group we mean an ordinary $p$-group $\wt{E}$ over $W(S)$ equipped with a
$\dt_{W(S)}$-structure compatible with its group structure. In this case, the multiplicative and \'etale group
schemes $\wt{G}:=\wt{E}_\mul$ and $\wt{H}:=\wt{E}_\et$ are both uniquely $\dt_{W(S)}$-groups, by proposition \ref{prop:lambda-etale-mult-groups}. Further, the morphisms in the short exact sequence 
	\[
	0\to \wt{G}\to \wt{E}\to\wt{H}\to 0
	\] 
are $\dt_{W(S)}$-morphisms. Indeed, $\wt{G}\to \wt{E}$ is an open immersion (of $p$-adic sheaves),
and $\wt{E}\to \wt{H}$, being the
quotient of $\wt{E}$ by the finite locally free $\dt_{W(S)}$-group $\wt{G}$, is a
$\dt_{W(S)}$-morphism, by proposition~\ref{prop:lambda-quotients}. We denote by $\Ord_{\dt}$ the fibred category
over $\Aff_{\Spf(\Z_p)}$ whose fibre over a $p$-adic affine scheme $S$ is the groupoid of ordinary
$\dt_{W(S)}$-$p$-group schemes. The \'etale-local nature of $\dt_{W(S)}$-structures shows that this is a stack
for the \'etale topology.

\begin{lemm}
	\label{lemm:easy-case} 
Let $G$ and $H$ be a multiplicative and an \'etale $p$-group over $\Spf(\Z_p)$ respectively. Then base change 
along the first ghost component induces an equivalence:
\begin{equation}
\label{map:ext-equiv}	
\Ext_{\dt_{W(\Spf(\Z_p))}}(\wt{H}, \wt{G})\to \Ext_{\Spf(\Z_p)}(H, G).
\end{equation}
\end{lemm}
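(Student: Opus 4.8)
The plan is to reduce to corollary~\ref{cor:ext-equivalence}. To apply it one must exhibit the canonical lift $\wt{G}$ of the multiplicative $p$-group $G$ as the kernel of a $\dt_{W(S)}$-homomorphism $\wt{G_1}\to\wt{G_2}$ of flat affine $\dt_{W(S)}$-group schemes with $G_1$ smooth over $S=\Spf(\Z_p)$; the \'etale $p$-group $H$, together with its canonical lift $\wt{H}=W(H)$, is already of the shape the corollary demands.

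First I would reduce to the case where $G$ is split, say $G=\prod_{i=1}^{r}\mu_{p^{n_i}}$. The Cartier dual $G^\vee$ is a finite \'etale group scheme over $\Spf(\Z_p)$, hence becomes constant over some connected finite \'etale cover $S'\to S$; any such cover is the formal spectrum of an unramified extension of $\Z_p$, so $S'\to S$ is Galois, and over $S'$ the group $G$ is split. Both $\Ext_{\dt_{W(S)}}(\wt{H},\wt{G})$ and $\Ext_S(H,G)$ are stacks for the \'etale topology (being equivalent to categories of symmetric monoidal functors into the \'etale stacks $\Tor_{\dt_{W(S)}}(\wt{G})$ and $\Tor_S(G)$, respectively), and the base-change functor is a morphism of stacks. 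Since $W$, and hence the formation of canonical lifts, commutes with \'etale base change (theorem~\ref{theo:witt-et-fibre}, proposition~\ref{prop:et-witt-equivalence}), pulling back along $S'\to S$ carries this morphism to the analogous morphism over $S'$; and as $S'\times_S S'$ is a finite disjoint union of copies of $S'$, it suffices to prove the statement over $S'$. Finally, construction~(ii) of section~\ref{subsec:torsor-constructions}, namely $\Tor_S(\prod_iG_i)\simeq\prod_i\Tor_S(G_i)$, and its $\dt_{W(S)}$-analogue (valid by section~\ref{sec:delta-torsors}), give $\Ext_S(H,\prod_iG_i)\simeq\prod_i\Ext_S(H,G_i)$ and likewise for $\dt_{W(S)}$-extensions, compatibly with base change along the first ghost component; since the canonical lift of $\prod_i\mu_{p^{n_i}}$ is $\prod_i\wt{\mu_{p^{n_i}}}$ by the uniqueness in proposition~\ref{prop:lambda-etale-mult-groups}(i), we are reduced to $G=\mu_{p^n}$.

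For $G=\mu_{p^n}$, take $\wt{G_1}=\wt{G_2}=\mathbf{G}_{m,W(S)}$, equipped with the $\dt_{W(S)}$-structure got by base change from the canonical $\dt$-structure on $\mathbf{G}_m$ over $\Spec(\Z)$, i.e.\ the one with Frobenius lift $x\mapsto x^p$ -- this makes sense since $\Spec(\Z)$ is terminal among $\dt$-sheaves, and the structure is compatible with the group law. Then $\mathbf{G}_{m,W(S)}$ is a smooth, flat, affine $\dt_{W(S)}$-group scheme; the $p^n$-th power map $[p^n]\colon\mathbf{G}_{m,W(S)}\to\mathbf{G}_{m,W(S)}$ is a $\dt_{W(S)}$-homomorphism because $x\mapsto x^{p^n}$ commutes with $x\mapsto x^p$; and its kernel is a finite locally free $\dt_{W(S)}$-group lifting $\mu_{p^n}$, hence, by the uniqueness in proposition~\ref{prop:lambda-etale-mult-groups}(i), is canonically $\wt{\mu_{p^n}}$. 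Now corollary~\ref{cor:ext-equivalence} applies directly, with $G_1=\mathbf{G}_{m,S}$ smooth over $S$, yielding the desired equivalence.

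I do not expect a genuine obstacle. The real content is the reduction to split $G$ and then the explicit presentation $\wt{\mu_{p^n}}=\ker\bigl([p^n]\colon\mathbf{G}_{m,W(S)}\to\mathbf{G}_{m,W(S)}\bigr)$, after which corollary~\ref{cor:ext-equivalence} does all the work; the only point needing even a line is that this kernel is the canonical lift, and that is forced by the uniqueness already proved in proposition~\ref{prop:lambda-etale-mult-groups}. The remaining ingredients -- that the two $\Ext$-categories are \'etale stacks, descent along the Galois cover, and the bi-additivity bookkeeping -- are routine.
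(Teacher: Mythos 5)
Your proposal is correct and follows essentially the same route as the paper: reduce by \'etale descent (the two $\Ext$'s being stacks) to split $G$, present $\wt{G}$ as the kernel of a $\dt_{W(S)}$-isogeny of tori ($[p^n]$ on $\mathbf{G}_{\mathrm{m}}$ in your case; the paper uses $(p^{r_1},\dots,p^{r_n})$ on $\mathbf{G}_{\mathrm{m}}^n$ all at once), and invoke corollary~\ref{cor:ext-equivalence}. The only cosmetic difference is your further reduction to a single factor $\mu_{p^n}$; just note that to see $[p^n]$ is a $\dt_{W(S)}$-morphism (not merely Frobenius-equivariant, which over the non-$p$-torsion-free base $W(S)$ is a priori weaker) one should either check $\delta(x^{p^n})=0$ directly from $\delta(x)=0$ or observe the map is the base change of a $\dt$-morphism over the $p$-torsion-free base $\Spec(\Z)$.
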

\begin{proof} 
Since the source and target of the functor are stacks,
it is enough to show equivalence locally on the base $\Spf(\bZ_p)$. Therefore we may assume $G=\prod_{i=1}^n \mu_{p^{r_i}}$ and hence
	\[
	\wt{G}=\prod_{i=1}^n \mu_{p^{r_i}/W(\Spf(\Z_p))}.
	\] 
Then $\wt{G}$ is the kernel of a homomorphism between relatively affine and smooth group schemes 
	\[
	0\to \wt{G}\to \mathbf{G}_{\mathrm{m}/W(\Spf(\Z_p))}^n\stackrel{(p^{r_1}, \ldots, p^{r_n})}{\longrightarrow} 
		\mathbf{G}_{\mathrm{m}/W(\Spf(\Z_p))}^n.
	\] 
Further if $\mathbf{G}_{\mathrm{m}/W(\Spf(\Z_p))}$ is given its usual $\dt_{W(S)}$-structure, these
maps are $\dt_{W(S)}$-equivariant, by proposition~\ref{prop:lambda-etale-mult-groups}.
Corollary~\ref{cor:ext-equivalence} then implies the functor (\ref{map:ext-equiv}) is an equivalence.
\end{proof}

\begin{prop}\label{prop:canonical-lifts-ordinary-groups} 
The functor 
	\[
	\Ord_{\dt}\to \Ord: \wt{E}/W(S)\mto E/S
	\] 
induced by pull-back along the first ghost component is an equivalence of groupoids.
\end{prop}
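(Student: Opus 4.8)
The plan is to regard the functor as a morphism of stacks over $\Aff_{\Spf(\Z_p)}$ and to prove it is an equivalence by checking that it is fully faithful---in the sense of inducing isomorphisms on sheaves of isomorphisms---and locally essentially surjective. Both $\Ord$ and $\Ord_\dt$ are stacks for the \'etale topology, and the functor (pull-back along the first ghost component) is a morphism of stacks, so both conditions may be verified \'etale-locally on the base; in particular one may fix an affine $p$-adic scheme $S$ and pass to an \'etale cover. Since multiplicative $p$-groups are Cartier duals of \'etale ones, and \'etale $p$-groups are \'etale-locally constant, after such a base change one may assume that every ordinary $p$-group $E$ occurring in the argument sits in an extension $0\to G\to E\to H\to 0$ with $H$ a constant \'etale $p$-group and $G\cong\prod_{i=1}^n\mu_{p^{n_i}}$.

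Next I would invoke the structure theory of section~\ref{sec:ordinary-lambda-groups}: for an ordinary $\dt_{W(S)}$-$p$-group $\wt E$, the groups $\wt E_\mul$ and $\wt E_\et$ carry unique compatible $\dt_{W(S)}$-structures, the sequence $0\to\wt E_\mul\to\wt E\to\wt E_\et\to 0$ consists of $\dt_{W(S)}$-morphisms, and---by proposition~\ref{prop:lambda-etale-mult-groups}---$\wt E_\mul$ and $\wt E_\et$ are the canonical lifts of $E_\mul$ and $E_\et$. In the trivialised situation above, $\wt G:=\prod_i\mu_{p^{n_i}/W(S)}$, with its unique compatible $\dt_{W(S)}$-structure, is the kernel of the $\dt_{W(S)}$-homomorphism of smooth affine $\dt_{W(S)}$-group schemes
\[
\mathbf{G}_{\mathrm{m}/W(S)}^n\stackrel{(p^{n_1},\dots,p^{n_n})}{\longrightarrow}\mathbf{G}_{\mathrm{m}/W(S)}^n
\]
(the $\dt$-structures and equivariance being exactly as in the proof of lemma~\ref{lemm:easy-case}). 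Hence corollary~\ref{cor:ext-equivalence} applies and gives, for each pair $(H,G)$ of this shape, an equivalence $\Ext_{\dt_{W(S)}}(\wt H,\wt G)\isomto\Ext_S(H,G)$ induced by pull-back along the first ghost component.

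Granting this, the two remaining verifications are short. For local essential surjectivity: an ordinary $E/S$ of the above shape is an object of $\Ext_S(H,G)$ (see section~\ref{subsec:extensions-torsors}), so it lifts through the equivalence to a $\dt_{W(S)}$-extension $\wt E$ of $\wt H$ by $\wt G$; as $\wt G$ is multiplicative and $\wt H$ \'etale, $\wt E$ is an ordinary $\dt_{W(S)}$-$p$-group pulling back to $E$. For full faithfulness: fix ordinary $\dt_{W(S)}$-$p$-groups $\wt E_1,\wt E_2$ with pull-backs $E_1,E_2$ and compare $\underline{\Isom}_{\Ord_\dt}(\wt E_1,\wt E_2)$ with $\underline{\Isom}_{\Ord}(E_1,E_2)$, which, being a map of sheaves, may be checked after the \'etale localisation above. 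Passing to multiplicative and \'etale parts exhibits $\underline{\Isom}_{\Ord}(E_1,E_2)$ as fibred over $\underline{\Isom}(G_1,G_2)\times\underline{\Isom}(H_1,H_2)$, with fibre over a pair of isomorphisms of the multiplicative and \'etale parts equal to the sheaf of isomorphisms of the two resulting extensions; likewise $\underline{\Isom}_{\Ord_\dt}(\wt E_1,\wt E_2)$ is fibred over the analogous sheaf of $\dt_{W(S)}$-isomorphisms. Proposition~\ref{prop:lambda-etale-mult-groups} identifies the two base sheaves, and the equivalence of the previous paragraph identifies the fibres, so the comparison map is an isomorphism. A fully faithful and locally essentially surjective morphism of \'etale stacks is an equivalence, which finishes the argument.

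I expect the main difficulty to lie in the full-faithfulness step---more precisely, in organising it correctly. Isomorphisms in $\Ord$ do not respect the multiplicative--\'etale decomposition on the nose, so one must set everything up relative to the sheaves of isomorphisms of the multiplicative and \'etale parts, and then feed in both the full faithfulness built into corollary~\ref{cor:ext-equivalence} and the rigidity of multiplicative and \'etale $\dt$-groups from proposition~\ref{prop:lambda-etale-mult-groups}. Checking that the \'etale-local trivialisations of $G$ and $H$ are irrelevant is precisely what the passage to stacks is there to handle, so no separate gluing argument should be needed.
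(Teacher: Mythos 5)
Your proof is correct, and it leans on the same two pillars as the paper's: the rigidity of multiplicative and \'etale $\dt$-groups (proposition~\ref{prop:lambda-etale-mult-groups}) and corollary~\ref{cor:ext-equivalence} applied to a split multiplicative group presented as the kernel of a torus isogeny. The organisation, however, is genuinely different. The paper never checks full faithfulness and local essential surjectivity directly over the given base: it covers the stack $\Ord$ by the extension stacks $\Ext_{\Spf(\Z_p)}(H,G)$ for split $G,H$ defined over $\Spf(\Z_p)$, forms the fibre product $\mc{E}$ of $\Ord_\dt\to\Ord$ with such a chart, and shows $\mc{E}\to\Ext_{\Spf(\Z_p)}(H,G)$ is an equivalence by factoring it through $\Ext_{\dt_{W(\Spf(\Z_p))}}(\wt H,\wt G)$ and invoking lemma~\ref{lemm:easy-case}; both essential surjectivity and the comparison of morphisms are then absorbed into that single fibre-product argument, with the Ext-equivalence only ever needed over $\Spf(\Z_p)$. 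You instead stay over (\'etale covers of) the given base $S$, apply corollary~\ref{cor:ext-equivalence} there directly (legitimate, since the corollary holds over any ind-affine $p$-adic base), and handle full faithfulness by fibring $\underline{\Isom}_{\Ord}(E_1,E_2)$ and $\underline{\Isom}_{\Ord_\dt}(\wt E_1,\wt E_2)$ over the Isom sheaves of the multiplicative and \'etale parts and comparing base and fibres. Your route costs you the explicit twisting/fibration bookkeeping (which you correctly identify as the delicate point, and which does go through, since any isomorphism of ordinary $p$-groups induces isomorphisms of the parts and the twists commute with base change along the first ghost component), while the paper's covering trick buys a cleaner treatment of morphisms at the price of introducing the auxiliary stack $\mc{E}$ and a descent argument for equivalences of stacks along the cover. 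Either way the content is the same, and no step of yours fails.
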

\begin{proof} 
First, as $G$ and $H$ vary over all multiplicative and \'etale $p$-groups over $\Spf(\Z_p)$ the maps 
	\[
	\big(\Ext_{\Spf(\Z_p)}(H, G)\to \Ord\big)_{(G, H)}
	\]
form a cover of $\Ord.$ Indeed, an ordinary $p$-group over a $p$-adic affine scheme $S$ is locally an extension
of a split \'etale group $H$ by a split multiplicative group $G$, and all such groups defined over $\Spf(\bZ_p)$.
The claim will then follow if we can show that for each pair $(G, H)$, the map $\mc{E}\to \Ext_{\Spf(\Z)}(H, G)$ 
is an equivalence where $\mc{E}$ is the fibre product 
	\[
	\xymatrix{
	\mc{E}\ar[r]\ar[d] & \Ext_{\Spf(\Z_p)}(H, G)\ar[d] \\
	\Ord_{\dt}\ar[r] & \Ord.
	}
	\] 

The sections of $\mc{E}$ over a $p$-adic affine scheme $S$ are triples 
	\[
	(\wt{E}, E', \rho\: \wt{E}\times_{W(S)}S\isomto E')
	\] 
where $\wt{E}$ is an ordinary $\dt_{W(S)}$-$p$-group and $E'/S$ is an extension of $G_S$ by $H_S$.
To show that \[\mc{E} \to \Ext_{\Spf(\Z_p)}(H, G)\] is an equivalence we will instead show that the natural map 
	\[
	\Ext_{\dt_{W(\Spf(\Z_p))}}(\wt{H}, \wt{G})\to \mc{E}: 
	\wt{E}\mto (\wt{E}, \wt{E}\times_{W(S)}S, \id_{\wt{E}\times_{W(S)}S})
	\] 
is an equivalence, from which the claim follows, as the composition 
	\[
	\Ext_{\dt_{W(\Spf(\Z_p))}}(\wt{H}, \wt{G})\to \mc{E} \to \Ext_{\Spf(\Z_p)}(H, G)
	\] 
is isomorphic to the equivalence (\ref{map:ext-equiv}).

If $(\wt{E}, E', \rho\: \wt{E}\times_{W(S)}S\isomto E')$ is a section of $\mc{E}$ over $S$, then we claim that
$\wt{E}$ admits a unique structure of an extension of $\wt{G}_{W(S)}$ by $\wt{H}_{W(S)}$ lifting the
corresponding structure on $\wt{E}\times_{W(S)}S$ induced by $\rho$. But this is obvious, as the isomorphisms
	\[
	\wt{E}_{\et}\times_{W(S)}S\isomto E'_\et = H_S \quad \text{ and } \quad \wt{E}_{\mul}\times_{W(S)}S\isomto
	E'_{\mul}=G_S
	\] 
induced by $\rho$ lift uniquely to isomorphisms 
	\[
	\wt{E}_{\et}\isomto \wt{H}_{W(S)} \quad \text{ and }\quad \wt{E}_{\mul}\isomto \wt{G}_{W(S)}
	\] 
by proposition \ref{prop:lambda-etale-mult-groups},
which gives $\wt{E}$ the structure of a $\dt_{W(S)}$-extension of
$\wt{H}_{W(S)}$ by $\wt{G}_{W(S)}$ and so the map in question is essentially surjective.

Moreover, if \[(\wt{f}, f')\: (\wt{E}_1, E'_1, \rho_1)\to (\wt{E}_2, E'_2, \rho_2)\] is any isomorphism of
sections of $\mc{E}$ over $S$ then it is easy to check that the induced map of $\dt_{W(S)}$-$p$-groups
	\[
	\wt{f}\: \wt{E}_1\to \wt{E}_2
	\]
is a morphism of the corresponding $\dt_{W(S)}$-extensions of $\wt{H}_{W(S)}$
by $\wt{G}_{W(S)}$ and so the functor in question is also full and therefore an equivalence.
\end{proof}

\begin{theo}\label{theo:ordinary-lambda} 
For each $p$-adic affine scheme $S$, the \textup{category} of ordinary $\dt_{W(S)}$-$p$-groups is equivalent, 
via base change along the first ghost component, to the category of ordinary $p$-groups over $S.$
\end{theo}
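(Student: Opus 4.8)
The plan is to deduce this from Proposition~\ref{prop:canonical-lifts-ordinary-groups}. That proposition already establishes the statement at the level of the underlying \emph{groupoids} --- objects and isomorphisms --- so essential surjectivity is free, and the only remaining point is to upgrade full faithfulness from isomorphisms to arbitrary homomorphisms: for ordinary $\dt_{W(S)}$-$p$-groups $\wt{E}_1,\wt{E}_2$ with pull-backs $E_1,E_2$ over $S$, one must show that
\[
\Hom_{\dt_{W(S)}}(\wt{E}_1,\wt{E}_2)\longrightarrow\Hom_S(E_1,E_2)
\]
is a bijection. Both sides are sheaves on the small \'etale site of $S$ (using theorem~\ref{theo:witt-et-fibre} for the source), so this may be checked \'etale-locally on $S$; in particular one may assume the multiplicative parts are split, hence --- as in the proof of lemma~\ref{lemm:easy-case} --- kernels of $\dt_{W(S)}$-homomorphisms between smooth affine $\dt_{W(S)}$-group schemes, which is exactly the hypothesis needed to invoke corollary~\ref{cor:ext-equivalence}.

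Next I would exploit the canonical filtration. Write $\wt{G}_i=(\wt{E}_i)_\mul$, $\wt{H}_i=(\wt{E}_i)_\et$ (so $\wt{H}_i=W(H_i)$), and $G_i,H_i$ for their pull-backs. A homomorphism from a multiplicative $p$-group to an ordinary one factors through the multiplicative part, so every homomorphism $f\colon E_1\to E_2$ carries $G_1$ into $G_2$ and hence determines a pair $(f_\mul\colon G_1\to G_2,\ f_\et\colon H_1\to H_2)$; likewise for $\dt_{W(S)}$-homomorphisms over $W(S)$. By proposition~\ref{prop:lambda-etale-mult-groups} every such pair lifts uniquely to a pair of $\dt_{W(S)}$-homomorphisms $(\wt{f}_\mul,\wt{f}_\et)$, and conversely the pair induced by a $\dt_{W(S)}$-homomorphism pulls back to the pair induced by its restriction to $S$. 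It therefore suffices, for each pair $(f_\mul,f_\et)$ and its lift $(\wt{f}_\mul,\wt{f}_\et)$, to set up a bijection between the homomorphisms lying over $(f_\mul,f_\et)$ and those lying over $(\wt{f}_\mul,\wt{f}_\et)$, and then sum over all pairs.

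The heart of the matter is the following reformulation. By the universal properties of push-out and pull-back of extensions, homomorphisms $f\colon E_1\to E_2$ inducing a fixed pair $(f_\mul,f_\et)$ are in natural bijection with morphisms $f_{\mul *}E_1\to f_\et^{*}E_2$ in the groupoid $\Ext_S(H_1,G_2)$, where $f_{\mul *}E_1$ is the push-out of $E_1$ along $f_\mul$ and $f_\et^{*}E_2$ the pull-back of $E_2$ along $f_\et$ --- both extensions of $H_1$ by $G_2$, every morphism between them being an isomorphism by the five lemma. The identical description holds over $W(S)$, using the $\dt_{W(S)}$-versions of these constructions recorded in section~\ref{sec:delta-torsors} together with proposition~\ref{prop:lambda-quotients} (which equips the relevant quotients with $\dt_{W(S)}$-structures and so makes $\wt{f}_{\mul *}\wt{E}_1$ a $\dt_{W(S)}$-extension). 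Since base change along a morphism of schemes commutes with fibre products and with fppf quotients, pull-back along the first ghost component carries $\wt{f}_{\mul *}\wt{E}_1$ and $\wt{f}_\et^{*}\wt{E}_2$ to $f_{\mul *}E_1$ and $f_\et^{*}E_2$; and corollary~\ref{cor:ext-equivalence} says $\Ext_{\dt_{W(S)}}(\wt{H}_1,\wt{G}_2)\to\Ext_S(H_1,G_2)$ is an equivalence, hence induces bijections on morphism sets. Chaining the bijections gives the desired matching over each pair $(f_\mul,f_\et)$, and summing over pairs completes the argument.

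I expect the only part requiring genuine care, rather than bookkeeping with universal properties, to be the verification that pull-back along the first ghost component really intertwines the push-out/pull-back constructions for extensions on the two sides --- i.e.\ that $\wt{f}_{\mul *}\wt{E}_1$ and $\wt{f}_\et^{*}\wt{E}_2$ correspond, under the equivalence of corollary~\ref{cor:ext-equivalence}, to $f_{\mul *}E_1$ and $f_\et^{*}E_2$. One could equally say that the conceptual content is simply the observation that, once its effect on the multiplicative and \'etale parts is pinned down, a homomorphism of ordinary $p$-groups is nothing but a morphism in an $\Ext$-groupoid --- which is exactly the datum controlled by corollary~\ref{cor:ext-equivalence}; granting this, the theorem is a repackaging of results already in hand.
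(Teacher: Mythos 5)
Your argument is correct in outline, but it is not the route the paper takes, so let me compare the two. The paper also starts from proposition~\ref{prop:canonical-lifts-ordinary-groups} for essential surjectivity, but it then gets full faithfulness in one homological step: applying $\Hom(-,\wt{E})$ to the filtration $0\to\wt{E}'_\mul\to\wt{E}'\to\wt{E}'_\et\to 0$ yields a four-term exact sequence ending in the Yoneda group $\mathrm{Ext}_{\dt_{W(S)}}(\wt{E}'_\et,\wt{E})$, the comparison with the corresponding sequence over $S$ is bijective in the first, third and fourth spots by lemma~\ref{lemm:lambda-hom-etale}, proposition~\ref{prop:lambda-etale-mult-groups} and corollary~\ref{cor:ext-equivalence}, and the five lemma then gives bijectivity of $\Hom_{\dt_{W(S)}}(\wt{E}',\wt{E})\to\Hom_S(E',E)$. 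You instead fibre the Hom-sets over the induced pairs on multiplicative and \'etale parts (matched on the two sides by proposition~\ref{prop:lambda-etale-mult-groups}), identify each fibre with a morphism set in the Ext groupoid of $H_1$ by $G_2$ via push-out and pull-back of extensions, and then invoke full faithfulness of the equivalence of corollary~\ref{cor:ext-equivalence}. Both proofs reduce to the same three inputs, but the decompositions differ, and each buys something: the paper's five-lemma argument is shorter and needs no compatibility of push-out/pull-back with the ghost-component base change, but it applies corollary~\ref{cor:ext-equivalence} to extensions with the whole ordinary group $\wt{E}$ as coefficients, where the kernel-presentation hypothesis of that corollary is less immediately visible; your version only ever uses the corollary on morphism sets with multiplicative coefficients, after an \'etale localization that splits them---exactly the situation already verified in the proof of lemma~\ref{lemm:easy-case}---at the cost of extra bookkeeping: the sheaf-theoretic justification of checking the bijection \'etale-locally, the verification (which you rightly single out) that the first ghost component carries $\wt{f}_{\mul *}\wt{E}_1$ and $\wt{f}_\et^{*}\wt{E}_2$ to $f_{\mul *}E_1$ and $f_\et^{*}E_2$, and the check that the classical bijection between homomorphisms over a fixed pair and morphisms $f_{\mul *}E_1\to f_\et^{*}E_2$ of extensions persists in the $\dt$-setting, for which proposition~\ref{prop:lambda-quotients} (making the push-out a $\dt$-extension) and proposition~\ref{prop:lambda-etale-mult-groups}(ii) (the induced pair is automatically a pair of $\dt$-homomorphisms) are the relevant tools. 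These verifications are routine and of the kind the paper itself leaves to the reader, so I regard your proposal as a sound alternative proof rather than a gap.
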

\begin{proof} 
By proposition~\ref{prop:canonical-lifts-ordinary-groups}, this functor induces an equivalence of the groupoids.
In particular, it is essentially surjective, and so we need only show that it is fully faithful.

First, the categories under consideration are exact, with admissible epimorphisms those which are epimorphisms
of fpqc sheaves, and the functor, pull-back along the first ghost component, is exact. Hence, fixing a pair
$\wt{E}$ and $\wt{E}'$ of ordinary $\dt_{W(S)}$-$p$-groups we obtain a morphism of exact sequences where the
rightmost column denotes the groups of Yoneda extensions:
\[
\xymatrix@C=10pt{
\Hom_{\dt_{W(S)}}(\widetilde{E}'_\et, \widetilde{E})\ar@{>->}[r]\ar[d] 
	& \Hom_{\dt_{W(S)}}(\widetilde{E}', \widetilde{E})\ar[r]\ar[d] 
	& \Hom_{\dt_{W(S)}}(\widetilde{E}'_\mul, \widetilde{E})\ar[d] \ar[r] 
	& \mathrm{Ext}_{\dt_{W(S)}}(\widetilde{E}'_\et, \widetilde{E})\ar[d]\\
\Hom_S(E'_\et, E)\ar@{>->}[r] 
	& \Hom_S(E', E)\ar[r] 
	& \Hom_S(E_\mul', E)\ar[r] 
	& \mathrm{Ext}_{S}(E_\et', E).
}
\]
By proposition \ref{lemm:lambda-hom-etale}, the first vertical arrow is bijective; by proposition
\ref{prop:lambda-etale-mult-groups}, the third arrow is bijective (noting that any $\dt_{W(S)}$-homomorphism
$\wt{E}'_\mul\to \wt{E}$ factors uniquely as a $\dt_{W(S)}$-homomorphism $\wt{E}'_\mul\to \wt{E}_\mul$, and any
homomorphism $E'_\mul\to E$ factors uniquely as a homomorphism $E_\mul\to E_\mul$); and by
corollary~\ref{cor:ext-equivalence}, the fourth vertical arrow is bijective. Hence by the Five Lemma, the second
vertical arrow is bijective and we are done. 
\end{proof}

\begin{rema}
	\label{rmk:ind-groups}
It follows from this theorem that the corresponding categories of ind-objects are equivalent---the 
the category of ordinary ind-$p$-groups is equivalent to that of ordinary ind-$\dt_{W(S)}$-$p$-groups. In 
particular, the category of
ordinary $\dt_{W(S)}$-$p$-divisible groups is equivalent to the category of ordinary $p$-divisible groups over
$S$. 

Therefore we have a canonical-lift functor for ordinary $p$-divisible groups over any base $S$.
This will allow us to define canonical lifts of ordinary abelian schemes using the theorem of Serre--Tate. 
\end{rema}

\section{Canonical lifts of ordinary abelian schemes}
We refer the reader to chapter 1 of Faltings--Chai~\cite{FaltingsChai} for a concise review of the 
theory of abelian schemes in the language we will use. We emphasise that by Raynaud's theorem 
(\cite{FaltingsChai}, p.\ 5), an abelian-scheme structure is \'etale-local on the base. So we may 
safely speak of abelian schemes over any sheaf.

\subsection{$\dt$-structures on abelian schemes} 
Before we consider canonical lifts of ordinary abelian schemes we will need several lemmas on $\dt$-structures
and abelian schemes over ind-schemes $S$ (which need not be $p$-adic).

\begin{lemm}\label{lemm:hom-ab-scheme-unram} Let $S$ be an ind-scheme, let $f\: A\to B$ be a homomorphism of abelian schemes over $S$ and let $S_0\to S$ be a nilpotent immersion. If $f\times_S S_0=0$ then $f=0$. In other words, $\underline{\Hom}_S(A, B)$ is formally unramified.
\end{lemm}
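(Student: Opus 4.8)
The plan is to reduce the statement to the classical rigidity lemma for abelian schemes. First I would reduce to the case where $S$ is a scheme. Writing $S=\colim_i S_i$, the abelian schemes $A,B$ and the homomorphism $f$ are, in the colimit sense, given by compatible homomorphisms $f_i\colon A_i\to B_i$ of abelian schemes over the $S_i$, and $f=0$ if and only if $f_i=0$ for all $i$; moreover $S_0\times_S S_i\to S_i$ is again a nilpotent immersion and $f_i$ restricts to $0$ over it. So it is enough to treat the case of a nilpotent immersion $S_0\hookrightarrow S$ of schemes (and one may further pass to an affine open cover of $S$ if convenient).

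The key observation is that a nilpotent immersion $S_0\hookrightarrow S$ is a homeomorphism on underlying spaces and induces isomorphisms on all residue fields. Hence for every $s\in S$ the point $\Spec\kappa(s)\to S$ factors through $S_0$, so the fibre $A_s:=A\times_S\Spec\kappa(s)$ agrees with the fibre of $A\times_S S_0$ over $s$, and likewise for $B$; consequently $f_s\colon A_s\to B_s$ equals the fibre of $f\times_S S_0=0$ over $s$, i.e.\ $f_s$ is the zero homomorphism of abelian varieties over $\kappa(s)$. In particular $f$ contracts every fibre of the structure map $\pi_A\colon A\to S$ to a single point, namely the value at $s$ of the identity section $e_B\colon S\to B$.

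Now I would invoke the rigidity lemma. The map $\pi_A$ is proper and flat with geometrically integral fibres, so $\pi_{A*}\mathcal{O}_A=\mathcal{O}_S$ universally, and $B\to S$ is separated. Since $f$ is constant on every fibre of $\pi_A$, the rigidity lemma (Mumford's book on geometric invariant theory, Prop.~6.1; see also Faltings--Chai) yields, for each $s\in S$, an open neighbourhood $U\ni s$ and a morphism $g_U\colon U\to B$ with $f|_{A_U}=g_U\circ\pi_A$. Composing with the identity section $e_A\colon S\to A$ gives $g_U=(f\circ e_A)|_U$; since $\pi_A$ is an epimorphism these local factorisations are unique and glue to a global equality $f=(f\circ e_A)\circ\pi_A$. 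Finally, $f$ is a homomorphism, so $f\circ e_A=e_B$, whence $f=e_B\circ\pi_A=0$. The ``in other words'' formulation follows because the hypotheses are stable under base change: for any affine $S$-scheme $T$ the pair $(A\times_S T,\,B\times_S T)$ over $T$ together with $T\times_S S_0\hookrightarrow T$ again satisfies them, so $\Hom_T(A_T,B_T)\to\Hom_{T_0}(A_{T_0},B_{T_0})$ is injective.

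The main obstacle is simply having the rigidity lemma available in the needed generality: $S$ here is neither assumed Noetherian nor connected, so one must use the version whose sole hypothesis is that $\pi_{A*}\mathcal{O}_A=\mathcal{O}_S$ commutes with base change (which holds for abelian schemes) and then glue the local factorisations as above. If one prefers to stay closer to the ``formally unramified'' language, there is a purely deformation-theoretic variant: by filtering the ideal of $S_0$ in $S$ by its powers one reduces to a square-zero extension $\Spec R\hookleftarrow\Spec R/I$ with $I^2=0$; then a morphism $A\to B$ over $R$ restricting to the zero morphism over $R/I$ is a lift of that morphism, and because $B/R$ is smooth while $\pi_{A*}\mathcal{O}_A=\mathcal{O}_R$, the torsor of such lifts consists only of the \emph{constant} ones $A\to\Spec R\to B$ (the relevant first-order deformation module is pulled back from the base); a constant morphism that is a homomorphism must carry $e_A$ to $e_B$, hence is $0$. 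The mild subtlety in this variant is matching up which constant lifts are group homomorphisms, which is immediate once one notes that a homomorphism kills the identity section.
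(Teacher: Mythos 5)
Your proof is correct, but it takes a genuinely different route from the paper's. You reduce to schemes, note that a nilpotent immersion is a homeomorphism inducing isomorphisms on residue fields, conclude that $f$ kills every fibre of $A\to S$, and then invoke the rigidity lemma in its non-Noetherian form (the only inputs being that $\pi_{A*}\mathcal{O}_A=\mathcal{O}_S$ holds universally for an abelian scheme and that $B\to S$ is separated), gluing the local factorisations through the base and using the homomorphism property only at the end to identify the constant map with $0$. The paper instead works with the thickening directly: since $f$ vanishes on $A\times_S S_0$ and the ideal of $S_0$ is nilpotent, $f$ factors through the formal completion of $B$ along its identity section; over affine $S$, quasi-compactness of $A$ forces the factorisation to land in a single infinitesimal neighbourhood, which is affine over $S$, and anti-affineness of the abelian scheme $A$ then makes $f$ factor through $S$, hence $f=0$ because $f$ is pointed. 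Your approach buys a slightly stronger statement (you only use that $S_0\to S$ is a universal homeomorphism with trivial residue-field extensions, so vanishing is detected fibrewise), at the cost of importing the rigidity lemma over a general, possibly non-Noetherian base plus a gluing step; the paper's argument is shorter on its own terms, avoids rigidity entirely, and reuses the anti-affineness mechanism that also drives proposition~\ref{prop:anti-affine-lambda}. Your deformation-theoretic variant at the end would also work after the reduction to an affine base with nilpotent ideal, but as sketched it is the less complete of your two arguments; the rigidity-based one is the one to keep.
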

\begin{proof} 
We may assume that $S$ is an affine scheme. If $f\times_S S_0=0$ then it follows
that the homomorphism $f\: A\to B$ factors through the formal completion of the identity section of $B$: 
	$$
	A\to \widehat{B}\to B.
	$$
However, $\widehat{B}$ is the colimit of the infinitesimal neighbourhoods of the zero
section in $B$, each of which is affine over $S$, so that as $A$ is quasi-compact over $S$ the homomorphism
$A\to B$ factors through one of these infinitesimal neighbourhoods. But these neighbourhoods are all $S$-affine,
and as $A$ is an $S$-abelian scheme, it is anti-affine.
Therefore this a map factors further through the structure map $A\to S$. In other words, it is the zero
homomorphism.
\end{proof}

\begin{prop}\label{lemm:hom-abelian-lambda} 
Let $S$ be an ind-affine $\dt$-sheaf, and let $A$ and $B$ be a pair of abelian schemes over $S$ with
$\dt_S$-structures (not necessarily compatible with the group structures). Let $f\: A\to B$ be an
$S$-homomorphism. If $f$ commutes with the relative Frobenius lifts on $A$ and $B$, then it is a
$\dt_S$-morphism.
\end{prop}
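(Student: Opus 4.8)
The goal is the identity $\alpha_{B/S}\circ f=J_S(f)\circ\alpha_{A/S}$ of morphisms $A\to J_S(B)$, where $\alpha_{A/S},\alpha_{B/S}$ are the structure maps of the two $\dt_S$-structures. Since $J_S$ preserves finite products, $J_S(B)$ is a group scheme over $S$, so this is equivalent to the vanishing of $g:=\alpha_{B/S}\circ f-J_S(f)\circ\alpha_{A/S}$. The plan is to deduce $g=0$ by the method of Proposition~\ref{prop:anti-affine-lambda}, applied with $X=A$ and $G=B$. Note first that whether $f$ is a $\dt_S$-morphism can be tested after an \'etale surjection $S'\to S$, and that all the hypotheses survive such a base change (using that $J_S$ commutes with base change along $\dt_S$-sheaves, $J$ being a right adjoint); so I am free to work \'etale-locally on $S$ and, in particular, to assume $S$ affine.

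Two of the inputs to Proposition~\ref{prop:anti-affine-lambda} are essentially immediate. The abelian scheme $A$ is anti-affine over $S$, as recalled in the proof of Lemma~\ref{lemm:hom-ab-scheme-unram}. And the relative coghost map $\coghost_{B/S}\colon J_S(B)\to B^{\bN}\times_{S^{\bN}}S$ is affine: since $B$ is an abelian scheme, every finite set of its points lies in an open affine (e.g.\ because, Zariski-locally on $S$, it is quasi-projective), so Lemma~\ref{lemm:coghost-is-affine-finite-point-property} makes the absolute coghost maps of the fibres $B\times_S S_i$ affine, and Lemma~\ref{lemm:coghost-affine-relative-coghost-affine} upgrades this to the affineness of $\coghost_{B/S}$. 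Granting these, the argument runs as in Proposition~\ref{prop:anti-affine-lambda}: the hypothesis that $f$ is compatible with the relative Frobenius lifts forces $\coghost_{B/S}\circ g=0$ --- one uses that $\coghost_{B/S}\circ\alpha_{B/S}$ has $i$-th component the $i$-fold iterate of $\varphi_{B/S}$, the analogous description for $A$, and the naturality of the coghost maps, together with $\varphi_{B/S}\circ f=\varphi_S^{*}(f)\circ\varphi_{A/S}$ --- so $g$ factors through the kernel $N$ of the homomorphism $\coghost_{B/S}$, which is affine over $S$ by the above. As $A$ is $S$-anti-affine, $g$ then factors through the structure map $\pi_A\colon A\to S$; equivalently $g=c\circ\pi_A$, where $c:=g\circ e_A=\alpha_{B/S}(e_B)-J_S(f)(\alpha_{A/S}(e_A))\colon S\to N\hookrightarrow J_S(B)$ (here $f\circ e_A=e_B$ since $f$ is a homomorphism).

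It remains to prove $c=0$, and this is the heart of the matter. The cleanest route is to equip $A$, and then compatibly $B$, with $\dt_S$-points: if $a_0\colon S\to A$ is a $\dt_S$-point, then $b_0:=f(a_0)$ is a $\dt_S$-point of $B$ (the homomorphism $f$, being compatible with the relative Frobenius lifts, sends Frobenius-fixed sections to Frobenius-fixed sections, and in the present $p$-adic smooth situation such a section is a coaction-fixed point), $f$ is then $S$-pointed, and the pointedness computation of Proposition~\ref{prop:anti-affine-lambda} gives $c=g\circ a_0=0$ (using $\alpha_{A/S}\circ a_0=J_S(a_0)$, $\alpha_{B/S}\circ b_0=J_S(b_0)$, $f\circ a_0=b_0$). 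The remaining task --- and the step I expect to be the real obstacle --- is therefore to produce a $\dt_S$-point on $A$ \'etale-locally on $S$, for a general $\dt_S$-structure need not fix the zero section. The idea: the Frobenius lift $\varphi_A\colon A\to A$ reduces modulo $p$ to a power of the absolute Frobenius, so its differential vanishes modulo $p$; hence $\varphi_A$ is contracting along the reduction of any of its fixed sections, and, using the smoothness of $A$, one can lift a fixed section successively through the nilpotent thickenings cut out by the powers of $p$ --- after passing to an \'etale cover of $S$ to secure a fixed section over the fibre where $p=0$. Making this precise, and in particular verifying that the resulting Frobenius-fixed section is genuinely a $\dt_S$-point --- the place where the structure of the comonad $J_S$, and not merely of the Frobenius lift, must be used --- is the part of the argument that requires care.
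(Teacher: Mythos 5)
Your reduction matches the paper's: anti-affineness of $A$, and affineness of the relative coghost map of $B$ via lemma~\ref{lemm:coghost-is-affine-finite-point-property} and lemma~\ref{lemm:coghost-affine-relative-coghost-affine} (the paper gets the finite-sets-of-points-in-affines property from theorem 1.9 of chapter I of \cite{FaltingsChai}; your quasi-projectivity remark is the same input). The paper then concludes immediately by proposition~\ref{prop:anti-affine-lambda}: the difference $g$ lands in the $S$-affine kernel of $\coghost_{B/S}$, hence equals a constant $c\circ\pi_A$, and the constant is disposed of by evaluating at the distinguished section. You have correctly located where the residual content sits --- killing the constant $c$ --- but your proposal does not actually do it. The step you defer (``produce a $\dt_S$-point of $A$ \'etale-locally, and verify that a Frobenius-fixed section is genuinely a $\dt_S$-point'') is left as a sketch, and its second half is precisely the hard part: over a base with $p$-torsion (and $W(S)$ for $S$ $p$-adic has plenty), a section compatible with the Frobenius lift is \emph{not} formally compatible with the full coaction --- that implication is available only in the $p$-torsion-free situation of proposition~\ref{prop:p-torsion-free-affine-frobenius-lambda}, and for a map $S\to A$ the anti-affine/affine-coghost mechanism yields nothing (every sheaf is trivially anti-affine over itself, so the same pointedness problem simply recurs one level down). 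In other words, your patch is an unproved assertion of exactly the type ``Frobenius data determines $\dt$-data'' whose failure in the presence of $p$-torsion is the reason the proposition needs a proof at all; there is a real danger of circularity. A smaller point: the opening reduction ``work \'etale-locally and assume $S$ affine'' is not available, since an ind-affine sheaf such as $W(S)$ or $\Spf(\Z_p)$ admits no \'etale cover by affine schemes; the lemmas you invoke are in any case stated for ind-affine $S$, so this reduction should be dropped.

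For comparison, the paper constructs no auxiliary $\dt_S$-point: it invokes proposition~\ref{prop:anti-affine-lambda} directly, with the zero sections playing the role of the distinguished points. You are right that checking the pointedness hypotheses there is the delicate issue in the stated generality, where the $\dt_S$-structures on $A$ and $B$ are not assumed compatible with the group laws (so $e_A$, $e_B$ are not known to be $\dt_S$-points), and flagging this is a genuine observation; but flagging a difficulty is not resolving it, and your write-up neither completes the $\dt_S$-point construction nor eliminates the constant $c$ by any other route. As it stands, the proof is incomplete at its decisive step.
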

\begin{proof} 
Write $S=\colim_{i\in I} S_i$ as a filtered colimit of affine schemes. By theorem 1.9 of chapter I of
\cite{FaltingsChai}, for each $i\in I$ the $S_i$-scheme $B\times_S S_i$ satisfies the hypotheses of
lemma~\ref{lemm:coghost-is-affine-finite-point-property} so that we may apply
lemma~\ref{lemm:coghost-affine-relative-coghost-affine} and deduce that the relative coghost homomorphism
	\[
	\coghost_{B/S, n}\: J_{S}^n(B)\to B^{n+1}\times_{S^{n+1}}S
	\]
is affine. Taking the limit over $n$, we see for general reasons that
	\[
	\coghost_{B/S}\: J_{S}(B)\to B^\bN\times_{S^\bN}S
	\]
is also affine.

At the same time, $A$ is a relative abelian scheme over $S$ and hence is $S$-anti-affine.
The claim then follows from proposition~\ref{prop:anti-affine-lambda}.
\end{proof}

\begin{coro}\label{coro:lambda-structures-on-abelian-varieties-are-determined-by-psi-structures} 
Let $S$ be an ind-affine $\dt$-sheaf, and let $A/S$ be a relative abelian scheme over 
$S$. Then the following hold:
\begin{enumerate}[label=\textup{(\roman*)}]
\item Two $\dt_S$-structures on $A$ are equal if and only if their relative Frobenius lifts $\varphi_{A/S}$
(defined in~(\ref{map:rel-frob-lift})) agree.
\item A $\dt_S$-structure on $A$ is compatible with the group law if and only if the relative Frobenius lift
$\varphi_{A/S}$ is a group homomorphism.
\end{enumerate}
\end{coro}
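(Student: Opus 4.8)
The plan is to deduce both assertions formally from Proposition~\ref{lemm:hom-abelian-lambda}, which already carries all the geometric content (affineness of the relative coghost map for abelian schemes, via Lemmas~\ref{lemm:coghost-is-affine-finite-point-property} and~\ref{lemm:coghost-affine-relative-coghost-affine}, together with anti-affineness of abelian schemes).

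For~(i), the ``only if'' direction is trivial, since $\varphi_{A/S}$ is manufactured out of the $\dt_S$-structure. For the ``if'' direction, given two $\dt_S$-structures $\alpha,\alpha'$ on $A$ with the same relative Frobenius lift, I would apply Proposition~\ref{lemm:hom-abelian-lambda} to the identity, viewed as an $S$-homomorphism $\id_A\colon(A,\alpha)\to(A,\alpha')$. The target $\varphi_S^*(A)$ of the relative Frobenius lift depends only on the underlying $S$-sheaf of $A$ and on $\varphi_S$, and $\varphi_S^*(\id_A)=\id$; so the hypothesis that the two relative Frobenius lifts agree says exactly that $\id_A$ commutes with them. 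Proposition~\ref{lemm:hom-abelian-lambda} then gives that $\id_A$ is a $\dt_S$-morphism $(A,\alpha)\to(A,\alpha')$, which unwinds to the equality $\alpha=\alpha'$.

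For~(ii), I would first record the two formal facts that make the translation work: since $J_S$ preserves the terminal object $S$ and fibre products over $S$, it carries group objects to group objects (so $J_S(A)$ is a group), and, by the same functoriality together with that of $\varphi$, the relative Frobenius lift of $A\times_S A$ with its product $\dt_S$-structure is $\varphi_{A/S}\times_S\varphi_{A/S}$, while $\varphi_S^*(\mu_A)$ is the multiplication of $\varphi_S^*(A)$. With these identifications, the assertion that the multiplication $\mu_A\colon A\times_S A\to A$ commutes with the relative Frobenius lifts is literally the assertion that $\varphi_{A/S}$ is a group homomorphism. Hence: if the $\dt_S$-structure is compatible with the group law then $\mu_A$ is in particular a $\dt_S$-morphism, hence compatible with the relative Frobenius lifts, so $\varphi_{A/S}$ is a homomorphism; conversely, if $\varphi_{A/S}$ is a homomorphism then $\mu_A$ commutes with the relative Frobenius lifts, and Proposition~\ref{lemm:hom-abelian-lambda} shows $\mu_A$ is a $\dt_S$-morphism. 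To finish the converse I would note that a morphism $\alpha\colon A\to J_S(A)$ of $S$-sheaves compatible with the two group multiplications is automatically compatible with the identity and inverse morphisms --- the point $\alpha\circ e_A$ is idempotent in the group $J_S(A)$, hence equals the identity section $e_{J_S(A)}$, and compatibility with inversion then follows formally --- so $\alpha$ is compatible with the full group structure.

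I do not anticipate any real obstacle: the corollary is a bookkeeping consequence of Proposition~\ref{lemm:hom-abelian-lambda}. The only points deserving a word of care are the two identifications used in~(ii) --- that the relative Frobenius lift of a product is the product of the relative Frobenius lifts, and that $\varphi_S^*$ sends multiplication maps to multiplication maps --- both of which follow from the functoriality of $\varphi$ and the compatibility of $J_S$, of the Frobenius lift on $J_S(A)$, and of the co-unit $\jp$ with products over $S$.
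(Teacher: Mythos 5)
Your proposal is correct and follows exactly the paper's argument: part (i) is obtained by applying Proposition~\ref{lemm:hom-abelian-lambda} to the identity map between $A$ equipped with the two $\dt_S$-structures, and part (ii) by applying it to the group law $A\times_S A\to A$. The extra verifications you include (the relative Frobenius of the product $\dt_S$-structure is the product of the relative Frobenii, and compatibility with multiplication forces compatibility with identity and inverse) are exactly the formal points the paper leaves implicit, and they are handled correctly.
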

\begin{proof}
(i) Apply proposition \ref{lemm:hom-abelian-lambda} to the morphism $\id\: A\to A$, where the source has one
$\dt_S$-structure and the target has the other.

(ii) Apply proposition \ref{lemm:hom-abelian-lambda} to the group law $A\times_{S} A\to A$.
\end{proof}

\subsection{Canonical lifts} 
We now fix an ind-affine $p$-adic sheaf $S$. As pointed out in remark~\ref{rmk:ind-groups}, we
have a theory of canonical lifts for ordinary $p$-divisible groups over
$S$. This allows us to define canonical lifts of ordinary abelian schemes using the theorem of Serre--Tate,
which we recall below.

Let $S_0\to S$ be a nilpotent thickening of $p$-adic sheaves and denote by $D_{S_0/S}$ the category whose
objects are triples $(G, A_0, h)$ where $G/S$ is a $p$-divisible group, $A_0/S_0$ is an abelian variety and 
$h\: G\times_S S_0\isomto A_0[p^\infty]$ is an isomorphism. A morphism $(G,A_0,h)\to(G',A'_0,h')$ is a
homomorphism $G\to G'$ and a homomorphism $A_0\to A'_0$ compatible with $h$ and $h'$ in the evident sense.

\begin{theo}[Serre--Tate]\label{serre-tate} 
	The functor from the category of abelian schemes over $S$ to $D_{S_0/S}$ given by 
	\[
	A/S\mto (A[p^\infty], A\times_S S_0, \mathrm{id})
	\]
is an equivalence of categories.
\end{theo}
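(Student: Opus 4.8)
The plan is to reduce to the classical Serre--Tate situation over a square-zero affine base and then quote crystalline deformation theory; the only ingredient special to the present level of generality is the reduction, which is purely formal. Both the source category and $D_{S_0/S}$ are fibred over $\Aff$ and are stacks for the \'etale (indeed fpqc) topology, and the functor $A\mapsto(A[p^\infty],A\times_S S_0,\id)$ respects this, so it suffices to treat $S=\Spec(R)$ with $p$ nilpotent in $R$; a standard limit argument (abelian schemes and $p$-divisible groups are finitely presented over the base, and the relevant $\Hom$-groups commute with filtered colimits of rings) further reduces to $R$ Noetherian, so that $I=\ker(R\to R_0)$ is an honest nilpotent ideal. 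One then performs the usual d\'evissage along the $I$-adic filtration $I\supseteq I^2\supseteq\cdots\supseteq I^{k+1}=0$: each $R/I^{j+1}\to R/I^{j}$ is a square-zero thickening, the functor is compatible with composing thickenings, and objects and isomorphisms glue, so induction on $k$ reduces the theorem for $R\to R_0$ to the case where $S_0\hookrightarrow S$ is a square-zero thickening with ideal $I$, $I^2=0$, $p$ nilpotent; such an $I$ carries its canonical nilpotent divided-power structure ($\gamma_n=0$ for $n\geq2$).

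For the square-zero case I would invoke Grothendieck--Messing. On the one hand, deformations of $G_0=A_0[p^\infty]$ along $S_0\hookrightarrow S$ are classified by the lifts of the Hodge filtration $\underline\omega_{G_0}\subseteq\mathbb{D}(G_0)(S)$ inside the evaluation of the Dieudonn\'e crystal; since $p$ is nilpotent one has $\underline\omega_{G_0}=\underline\omega_{A_0}$ and the Hodge sequence $0\to\underline\omega_{A_0}\to\mathbb{D}(G_0)(S_0)\to\Lie(A_0^{\vee})\to0$, so the set of such lifts, when non-empty, is a torsor under $\Lie(A_0)\otimes_{R_0}\Lie(A_0^{\vee})\otimes_{R_0}I$, and a homomorphism of $p$-divisible groups deforms precisely when it respects the lifted filtrations. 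On the other hand, by the deformation theory of abelian schemes in crystalline form (Grothendieck; Mazur--Messing; Berthelot--Breen--Messing), $H^1_{\mathrm{dR}}(A_0/S_0)$ underlies a crystal canonically identified, with its Hodge filtration, with the Dieudonn\'e crystal of $A_0[p^\infty]$; deformations of $A_0$ are again classified by lifts of that filtration, form a torsor under the same group, and homomorphisms deform under the same condition. Both deformation problems are unobstructed, the restriction functor of the theorem is equivariant for these identifications, and so it is an equivalence: essential surjectivity amounts to the effectivity statement that a lift of the Hodge filtration comes from an actual deformation of the abelian scheme (not merely of its Barsotti--Tate group), while full faithfulness follows from the matching of torsor structures, with injectivity on $\Hom$-sets already provided by lemma~\ref{lemm:hom-ab-scheme-unram}.

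The hard part is assembling the crystalline deformation theory in exactly this shape --- in particular the effectivity half, and the functorial identification of $H^1_{\mathrm{crys}}(A_0)$ with $\mathbb{D}(A_0[p^\infty])$ compatibly with Hodge filtrations and with the torsor structures on both deformation problems. One can instead bypass crystals and follow Drinfeld's elementary argument: using the nilpotence of $I$ and of $p$ one proves a lifting lemma to the effect that, after multiplication by a suitable power of $p$, a homomorphism over $R_0$ between the relevant $p$-power-torsion groups lifts uniquely to $R$, and then plays the given lift of $A_0[p^\infty]$ against this lemma to construct the lift of $A_0$; rigidity (lemma~\ref{lemm:hom-ab-scheme-unram} again) forces uniqueness throughout. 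Either route is entirely classical, and only the reductions of the first paragraph are particular to the present formulation, where they are formal.
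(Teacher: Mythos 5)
Your proposal is correct and matches the paper's route: the paper likewise disposes of the general case by purely formal reduction to the affine situation and then simply cites the classical theorem (Katz 1.2.1 / the appendix to Drinfeld), which is exactly the Drinfeld-style argument you fall back on. The crystalline (Grothendieck--Messing) sketch is a fine alternative for the square-zero step, but it is not needed and is not what the paper invokes.
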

\begin{proof}
	When $S$ is affine, see 1.2.1 of \cite{Katz} or the appendix of~\cite{Drinfeld:symmetric-domains}
	or the original reference, theorem (2.3) on p.\ 166 of~\cite{Messing72}.
	The general case then follows for formal reasons.
\end{proof}

Now, if $A/S$ is an ordinary abelian scheme there exists a unique ordinary abelian scheme $\wt{A}/W(S)$ with the property that $\wt{A}[p^\infty]/W(S)$ is the unique ordinary $\dt_{W(S)}$-$p$-divisible group lifting $A[p^\infty]/S.$ We call $\wt{A}/W(S)$ the canonical lift of $A/S$.

It also follows from theorem \ref{serre-tate} that the abelian scheme $\wt{A}/W(S)$ admits a unique lift of the Frobenius \[\varphi_{\wt{A}/W(S)}\: \wt{A}\to \varphi^*_{W(S)}(\wt{A})\] which is a homomorphism.

\begin{theo} 
	\label{thm:can-lift}
	There is a unique $\dt_{W(S)}$-structure on $\wt{A}/W(S)$ compatible with its group structure. Moreover, $\wt{A}/W(S)$ is the unique deformation of $A/S$ admitting a $\dt_{W(S)}$-structure compatible with its group structure.
\end{theo}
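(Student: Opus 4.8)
\emph{Proof sketch and plan.} The theorem makes three assertions: the canonical lift $\wt A$ carries a $\dt_{W(S)}$-structure compatible with its group law, this structure is unique, and $\wt A$ is the only deformation of $A$ admitting such a structure. The plan is to reduce first to the case of $S$ affine --- harmless, since $\wt A$, its $p$-divisible group, its homomorphic Frobenius lift, and $\dt$-structures are all compatible with writing $S=\colim_i S_i$ as a filtered colimit of affine pieces --- then to settle the two uniqueness statements quickly, and finally to concentrate on existence.

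The uniqueness assertions follow at once from corollary~\ref{coro:lambda-structures-on-abelian-varieties-are-determined-by-psi-structures}. If $\wt A$ carries a group-compatible $\dt_{W(S)}$-structure, then by part~(ii) its relative Frobenius lift is a group homomorphism, hence coincides with the unique homomorphic Frobenius lift $\varphi_{\wt A/W(S)}$ supplied by Serre--Tate; and by part~(i) a $\dt_{W(S)}$-structure on a relative abelian scheme is determined by its relative Frobenius lift, so at most one such structure exists. For the deformation statement, let $\wt B/W(S)$ be any deformation of $A$ admitting a group-compatible $\dt_{W(S)}$-structure. Then $\wt B[p^\infty]$, obtained by taking the kernels of $p^n\:\wt B\to\wt B$ and then a filtered colimit (all inside $\dtSh{W(S)}$, where limits and colimits exist with underlying sheaves computed in $\Sh_{W(S)}$), is a group-compatible $\dt_{W(S)}$-$p$-divisible group lifting $A[p^\infty]$; by the ind-extension of theorem~\ref{theo:ordinary-lambda} it must be the canonical $\dt$-lift of $A[p^\infty]$, and then $\wt B\cong\wt A$ by Serre--Tate (theorem~\ref{serre-tate}).

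To construct the $\dt_{W(S)}$-structure, the first point is that its existence is local on $S$ for the \'etale topology: assigning to $T\to S$ the set of group-compatible $\dt_{W(T)}$-structures on $\wt A\times_{W(S)}W(T)$ gives a sheaf (the functor $W$ commutes with \'etale base change, theorem~\ref{theo:witt-et-fibre}), and by part~(i) of the corollary just used this sheaf has at most one section over any $T$ --- namely the unique structure whose relative Frobenius lift is the base change of $\varphi_{\wt A/W(S)}$ --- so, being a subsheaf of the terminal sheaf, it is globally nonempty as soon as it is \'etale-locally nonempty. We may therefore pass to an \'etale cover and assume that $A/S$ comes with a polarisation and a level-$N$ structure, $N\geq 3$ prime to $p$, classified by a map $c\:S\to\mathcal{M}$ to the $p$-adic completion of the ordinary locus of the corresponding moduli scheme, which is smooth over $\Spf(\bZ_p)$. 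Since canonical lifts, the homomorphic Frobenius lift, and group-compatible $\dt$-structures are all stable under base change along $W(c)\:W(S)\to W(\mathcal{M})$, it suffices to produce a group-compatible $\dt_{W(\mathcal{M})}$-structure on the canonical lift $\wt{\mathcal{A}}$ of the universal abelian scheme; and, repeating the \'etale localisation over $\mathcal{M}$ and using quasi-compactness, we may assume $\mathcal{M}=\Spec(R)$ with $R$ a $p$-adically complete smooth $\bZ_p$-algebra --- so that $\mathcal{M}$, and hence $W(\mathcal{M})$ by remark~\ref{rema:p-tor-free-extra}(i), is a $p$-torsion free ind-affine sheaf.

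Here is the heart of the matter. Over this base, the $p$-divisible group $\wt{\mathcal{A}}[p^\infty]$ is ind-affine and flat over $W(\mathcal{M})$, hence $p$-torsion free by remark~\ref{rema:p-tor-free-extra}(ii), and it carries, by the very construction of $\wt{\mathcal{A}}$, a group-compatible $\dt_{W(\mathcal{M})}$-structure whose relative Frobenius lift is the restriction of $\varphi_{\wt{\mathcal{A}}/W(\mathcal{M})}$ --- indeed, by proposition~\ref{prop:p-torsion-free-affine-frobenius-lambda}, this is the only $\dt_{W(\mathcal{M})}$-structure on $\wt{\mathcal{A}}[p^\infty]$ inducing that Frobenius lift. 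The remaining step, which I expect to be the main obstacle, is to propagate this $\dt$-structure from the $p$-divisible group $\wt{\mathcal{A}}[p^\infty]$ to the abelian scheme $\wt{\mathcal{A}}$ itself. The ingredients I would use are: Serre--Tate over the first-ghost thickening $\mathcal{M}\hookrightarrow W(\mathcal{M})$; the affineness of the relative coghost maps of $\wt{\mathcal{A}}$ over $W(\mathcal{M})$ (lemmas~\ref{lemm:coghost-is-affine-finite-point-property} and~\ref{lemm:coghost-affine-relative-coghost-affine}, applicable as in the proof of proposition~\ref{lemm:hom-abelian-lambda}) together with the $W(\mathcal{M})$-anti-affineness of $\wt{\mathcal{A}}$, which, exactly as in the proof of proposition~\ref{prop:anti-affine-lambda}, make the construction rigid --- any two group-compatible $\dt_{W(\mathcal{M})}$-structures on $\wt{\mathcal{A}}$ with relative Frobenius lift $\varphi_{\wt{\mathcal{A}}/W(\mathcal{M})}$ coincide, so it is enough to build one after a further \'etale cover; and, crucially, the smoothness of $\mathcal{M}$ over $\Spf(\bZ_p)$, which governs the deformation theory of $\wt{\mathcal{A}}$ over $W(\mathcal{M})$ and is what forces the obstruction to extending the $\dt$-structure to vanish. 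Granting this, corollary~\ref{coro:lambda-structures-on-abelian-varieties-are-determined-by-psi-structures}(ii) confirms the resulting structure is compatible with the group law, and pulling back along $W(c)$ and gluing --- again by part~(i) of that corollary --- produces the $\dt_{W(S)}$-structure on $\wt A$, completing the proof.
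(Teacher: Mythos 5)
Your treatment of the two uniqueness assertions is correct and matches the paper (corollary~\ref{coro:lambda-structures-on-abelian-varieties-are-determined-by-psi-structures} plus theorem~\ref{theo:ordinary-lambda} and Serre--Tate), and your observation that existence may be checked \'etale-locally because group-compatible $\dt_{W(S)}$-structures inducing the fixed Frobenius lift form a subsheaf of the terminal sheaf is sound. But the existence argument itself has a genuine gap: the step you yourself flag as ``the main obstacle''---producing the $\dt_{W(\mathcal{M})}$-structure on the abelian scheme $\wt{\mathcal{A}}$ once the base is $p$-torsion free---is never carried out; it is dispatched with ``Granting this,'' supported only by a vague appeal to deformation theory and vanishing obstructions. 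That step \emph{is} the theorem. Moreover, the framing is off: there is no need to ``propagate'' the $\dt$-structure from $\wt{\mathcal{A}}[p^\infty]$ to $\wt{\mathcal{A}}$, and no obstruction calculus enters. The mechanism is more elementary: over a $p$-torsion free ind-affine base one already has, by Serre--Tate, the homomorphic Frobenius lift $\varphi_{\wt{\mathcal{A}}/W(\mathcal{M})}$ on the whole abelian scheme; one then covers $\wt{\mathcal{A}}$ by relatively affine, smooth open subsheaves $\wt{\mathcal{A}}_i$, which are $p$-torsion free ind-affine sheaves by remark~\ref{rema:p-tor-free-extra}; the Frobenius lift restricts to each $\wt{\mathcal{A}}_i$ (it is topologically the identity), so proposition~\ref{prop:p-torsion-free-affine-frobenius-lambda} furnishes a \emph{unique} $\dt$-structure on each $\wt{\mathcal{A}}_i$ inducing it; and this uniqueness makes the local structures glue to a $\dt_{W(\mathcal{M})}$-structure on $\wt{\mathcal{A}}$ inducing $\varphi_{\wt{\mathcal{A}}/W(\mathcal{M})}$, which is then group-compatible by corollary~\ref{coro:lambda-structures-on-abelian-varieties-are-determined-by-psi-structures}(ii). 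Without this (or an equivalent) argument, your proof does not establish existence.

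A secondary issue lies in how you reach a $p$-torsion free base. You pass, after an \'etale cover, to the $p$-adic completion $\mathcal{M}$ of the ordinary locus of a moduli \emph{scheme} of polarised abelian schemes with level structure; but smoothness of that scheme over $\Z_p$ requires the polarisation degree to be prime to $p$, and such a polarisation need not exist even \'etale-locally on $S$, so the asserted smoothness (hence $p$-torsion freeness) of $\mathcal{M}$ is not automatic. The paper sidesteps this entirely: it does not map to a moduli space at all, but uses local finite presentation and formal smoothness of the (unpolarised) moduli stack of abelian schemes only to replace the given affine $S=\Spec(\Z_p[T_1,\dots,T_r]/I)$ by the $p$-torsion free ind-affine thickening $\colim_n\Spec(\Z_p[T_1,\dots,T_r]/I^n)$, extending $A$ over it, and then runs the affine-cover argument above over that base. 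Your reduction could likely be repaired (e.g.\ via Zarhin's trick, at the cost of changing the abelian scheme and then descending), but as written it introduces an unaddressed hypothesis, and in any case the core existence step remains missing.
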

\begin{proof} 
	As $\wt{A}/W(S)$ admits a lift of the relative Frobenius which is a group homomorphism, uniqueness of a
	$\dt_{W(S)}$-structure on $\wt{A}$ inducing this relative Frobenius lift follows from
	corollary \ref{coro:lambda-structures-on-abelian-varieties-are-determined-by-psi-structures},
	which also shows that such a $\dt_{W(S)}$-structure must be compatible with the group law.
	
	Moreover, any deformation of $A/S$ along $S\to W(S)$ which admits a $\dt_{W(S)}$-structure compatible with the group law induces a $\dt_{W(S)}$-structure on the corresponding $p$-divisible group, which must therefore be isomorphic to the $p$-divisible group of $\wt{A}/W(S)$ in a unique way, and so such a deformation must also be isomorphic to $\wt{A}$ in a unique way.
	
	It remains to show that there exists a $\dt_{W(S)}$-structure on $\wt{A}/W(S)$ inducing the given relative
	Frobenius lift. For this we may first assume that $S=\Spec(R)$ is affine. We can then write $R=\colim_i R_i$ as a filtered colimit of finitely presented $\Z_p$-algebras. By passage to the limit, any abelian scheme $A/\Spec(R)$ comes from an abelian scheme $A_i/\Spec(R_i)$ so that we may assume that $R$ is a finitely presented $\Z_p$-algebra. Then $R$ is of the form $\Z_p[T_1, \ldots, T_r]/I$ where $I$ is a finitely generated ideal containing some power of $p$. Since the deformation theory abelian schemes is unobstructed (see 8.5.24.(a) of \cite{Illusie}), we can find a compatible family of abelian schemes $A_n/\Spec(\Z_p[T_1, \ldots, T_r]/I^n)$ for $n\geq 1$ with $A_1=A$, or in other words, an abelian scheme $A'$ over the $p$-adic formal scheme $\colim_n \Spec(\Z_p[T_1, \ldots, T_r]/I^n)$. Note that $A'$ is ordinary as $A$ is and that $\colim_n \Spec(\Z_p[T_1, \ldots, T_r]/I^n)$ is a $p$-torsion free
	ind-affine sheaf. Therefore, in order to show that the relative Frobenius lift on the canonical lift of an ordinary abelian scheme comes from a $\delta_{W(S)}$-structure, it is enough to do so for ordinary abelian schemes defined over $p$-torsion free ind-affine schemes $S$ of the form $\colim_n \Spec(\Z_p[T_1, \ldots, T_r]/I^n)$ where $I$ contains some power of $p.$
	
	As the transition maps in the system $S=\colim_n \Spec(\Z_p[T_1, \ldots, T_r]/I^n)$ are nilpotent immersions, it
	follows that we can find a representable open cover $(\wt{A}_i\to \wt{A})_i$ where each $\wt{A}_i$ is a
	relatively affine and smooth $W(S)$-scheme. In particular they are $p$-torsion free ind-affine sheaves, as
	explained in remark~\ref{rema:p-tor-free-extra}. The Frobenius lift $\varphi_{\wt{A}/S}$ restricts to each
	$\wt{A}_i$ (because $\wt{A}$ is $p$-adic and the Frobenius map is topologically the identity) and equips each
	$\wt{A}_i$ with a unique $\dt_{W(S)}$-structure by
	proposition~\ref{prop:p-torsion-free-affine-frobenius-lambda}. Because of this uniqueness, the
	$\dt_{W(S)}$-structures glue to give a $\dt_{W(A)}$-structure on $\wt{A}$ itself which by construction induces
	the given Frobenius lift. 
\end{proof}

\begin{theo}\label{theo:can-lift-p-adic} Let $S$ be a $p$-adic scheme. Then the category of ordinary $\dt_{W(S)}$-abelian schemes is equivalent, via base change along the first ghost component, to the category of ordinary abelian schemes over $S$.
\end{theo}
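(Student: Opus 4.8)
The plan is to follow the pattern by which theorem~\ref{theo:ordinary-lambda} was deduced from proposition~\ref{prop:canonical-lifts-ordinary-groups}, but with the Serre--Tate theorem~\ref{serre-tate} playing the role that the comparison of extension groups played there. Since full faithfulness is local on $S$ and the categories in sight are compatible with \'etale localisation, I would first reduce to the case $S=\Spec(A)$ affine with $p$ nilpotent in $A$, so that $W(S)=\colim_n W_n(S)$ is an ind-affine $p$-adic $\dt$-sheaf. Essential surjectivity of the base-change functor is then exactly theorem~\ref{thm:can-lift}: every ordinary abelian scheme over $S$ has a canonical lift carrying a compatible $\dt_{W(S)}$-structure, and conversely, by the uniqueness clause of that theorem, every ordinary $\dt_{W(S)}$-abelian scheme is the canonical lift of its reduction. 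Faithfulness is immediate and uses no $\dt$-structure: a homomorphism $\wt{A}\to\wt{B}$ over $W(S)=\colim_n W_n(S)$ is a compatible family of homomorphisms over the truncations $W_n(S)$, and since each $W_n(S)$ is $p$-adic with $S=W_0(S)\hookrightarrow W_n(S)$ a nilpotent immersion (proposition~\ref{prop:ghost-nil-immersion}), vanishing of the reduction along the first ghost component forces, by lemma~\ref{lemm:hom-ab-scheme-unram}, the vanishing of the whole family. So the content of the theorem is fullness.

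For fullness I would start from a homomorphism $f\colon A\to B$ over $S$ and reconstruct its lift through $p$-divisible groups. The induced map $f[p^\infty]\colon A[p^\infty]\to B[p^\infty]$ is a homomorphism of ordinary $p$-divisible groups over $S$; since $\wt{A}$ and $\wt{B}$ are canonical lifts, $\wt{A}[p^\infty]$ and $\wt{B}[p^\infty]$ are the canonical $\dt_{W(S)}$-lifts of $A[p^\infty]$ and $B[p^\infty]$, so by theorem~\ref{theo:ordinary-lambda} applied to ind-objects (i.e.\ to $p$-divisible groups) the homomorphism $f[p^\infty]$ lifts to a unique $\dt_{W(S)}$-homomorphism $\wt{f}_{p^\infty}\colon\wt{A}[p^\infty]\to\wt{B}[p^\infty]$. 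Applying the Serre--Tate theorem~\ref{serre-tate} to the nilpotent thickening $S=W_0(S)\hookrightarrow W_n(S)$ for each $n$, the pair $(\wt{f}_{p^\infty},f)$---which is compatible because $\wt{f}_{p^\infty}$ reduces to $f[p^\infty]$---corresponds to a homomorphism $\wt{A}\times_{W(S)}W_n(S)\to\wt{B}\times_{W(S)}W_n(S)$, and these glue to a homomorphism $\wt{f}\colon\wt{A}\to\wt{B}$ over $W(S)$ lifting $f$, with $\wt{f}[p^\infty]=\wt{f}_{p^\infty}$.

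It then remains to check that $\wt{f}$ is a $\dt_{W(S)}$-morphism, and by proposition~\ref{lemm:hom-abelian-lambda} it is enough to verify that $\wt{f}$ commutes with the relative Frobenius lifts, i.e.\ that the homomorphism $g:=\varphi_{\wt{B}/W(S)}\circ\wt{f}-\varphi^*_{W(S)}(\wt{f})\circ\varphi_{\wt{A}/W(S)}$ of abelian schemes $\wt{A}\to\varphi^*_{W(S)}\wt{B}$ over $W(S)$ is zero. Each $\wt{A}[p^n]=\ker([p^n]\colon\wt{A}\to\wt{A})$ inherits a $\dt_{W(S)}$-structure from $\wt{A}$ (equalisers of $\dt$-morphisms exist and are computed underlyingly), which by theorem~\ref{theo:ordinary-lambda} is its only one; hence $\wt{A}[p^\infty]\hookrightarrow\wt{A}$ is a $\dt_{W(S)}$-morphism, so $\varphi_{\wt{A}/W(S)}$ restricts on $p$-divisible groups to $\varphi_{\wt{A}[p^\infty]/W(S)}$, and likewise for $\wt{B}$. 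Therefore $g[p^\infty]=\varphi_{\wt{B}[p^\infty]/W(S)}\circ\wt{f}_{p^\infty}-\varphi^*_{W(S)}(\wt{f}_{p^\infty})\circ\varphi_{\wt{A}[p^\infty]/W(S)}=0$, since $\wt{f}_{p^\infty}$ is a $\dt_{W(S)}$-morphism. Restricting $g$ along the first ghost component gives a homomorphism of abelian schemes over $S$ that vanishes on $p$-divisible groups, hence is zero on every geometric fibre (over a field of characteristic $p$ a homomorphism killing all $p^n$-torsion lies in $\bigcap_n p^n$ of a finitely generated group of homomorphisms, hence is zero), hence is zero because $\underline{\Hom}$ is representable, separated and unramified over $S$ (the last point by lemma~\ref{lemm:hom-ab-scheme-unram}). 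Finally $g$ is a homomorphism over $W(S)=\colim_n W_n(S)$ whose restriction to $p$-divisible groups and whose reduction to $S$ both vanish, so by the Serre--Tate equivalence over each $S\hookrightarrow W_n(S)$ we obtain $g=0$; proposition~\ref{lemm:hom-abelian-lambda} then shows $\wt{f}$ is a $\dt_{W(S)}$-morphism lifting $f$, completing the proof.

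The step I expect to be the main obstacle is this last one: keeping track of the relative Frobenius lift of an abelian scheme, that of its $p$-divisible group, and how both interact with the twist by $\varphi_{W(S)}$ and with reduction along the first ghost component, so that the vanishing of $g$ genuinely reduces to the already-established $p$-divisible group case (theorem~\ref{theo:ordinary-lambda}) together with Serre--Tate rigidity. Everything else is formal once theorems~\ref{thm:can-lift} and~\ref{theo:ordinary-lambda} are granted.
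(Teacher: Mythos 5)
Your proposal follows essentially the same route as the paper: essential surjectivity from theorem~\ref{thm:can-lift}, faithfulness from lemma~\ref{lemm:hom-ab-scheme-unram} together with the nilpotence of $S\to W_n(S)$, and fullness by lifting $f[p^\infty]$ through theorem~\ref{theo:ordinary-lambda} (passed to ind-objects), invoking Serre--Tate to produce $\wt{f}$, showing the Frobenius-commutation defect $g$ vanishes because it vanishes on $p$-divisible groups, and concluding with proposition~\ref{lemm:hom-abelian-lambda}. Your expansion of ``$g[p^\infty]=0$ implies $g=0$'' (reduce to $S$, use fibrewise finite generation of the Hom group, then Serre--Tate faithfulness over each $W_n(S)$) supplies detail the paper only asserts, and is fine, though it quietly imports representability/rigidity of $\underline{\Hom}_S(A,B)$ beyond what lemma~\ref{lemm:hom-ab-scheme-unram} states.

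One step, however, is justified incorrectly, even though the fact you need is true and available more directly. You claim that the $\dt_{W(S)}$-structure inherited by $\wt{A}[p^n]$ from $\wt{A}$ ``is its only one by theorem~\ref{theo:ordinary-lambda}''. That theorem (and proposition~\ref{prop:canonical-lifts-ordinary-groups}) gives uniqueness of the $\dt$-lift only up to a unique isomorphism reducing to the identity over $S$, not uniqueness of the $\dt$-structure on a \emph{fixed} group scheme over $W(S)$; and on-the-nose uniqueness genuinely fails for finite ordinary $p$-groups, because such a group can admit nontrivial automorphisms reducing to the identity. For instance, with $S=\Spec(\F_p[t]/(t^2))$ the element $1+V([t])\in W(\F_p[t]/(t^2))$ is a nontrivial $p$-th root of unity reducing to $1$, hence gives a nonzero $\psi\in\Hom_{W(S)}(\Z/p,\mu_p)$ vanishing over $S$ and a nontrivial automorphism of $\mu_p\times\Z/p$ over $W(S)$ that is the identity over $S$; transporting the canonical $\dt$-structure along it produces a second compatible $\dt$-structure on the same group. (The anti-affineness/coghost argument that yields on-the-nose uniqueness for abelian schemes, corollary~\ref{coro:lambda-structures-on-abelian-varieties-are-determined-by-psi-structures}, has no analogue for finite group schemes.) What your argument actually requires is only that $\varphi_{\wt{A}/W(S)}$ restricts on $\wt{A}[p^\infty]$ to the Frobenius lift of the canonical $\dt$-structure, and likewise for $\wt{B}$; this holds by construction: $\wt{A}$ is defined via Serre--Tate so that $\wt{A}[p^\infty]$ \emph{is} the canonical $\dt$-lift, $\varphi_{\wt{A}/W(S)}$ is the homomorphic Frobenius lift produced by Serre--Tate from the canonical Frobenius lift on that $p$-divisible group, and the $\dt$-structure of theorem~\ref{thm:can-lift} is constructed precisely so as to induce this $\varphi_{\wt{A}/W(S)}$. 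With that substitution, your proof is exactly the paper's.
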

\begin{proof} By theorem \ref{thm:can-lift} every ordinary abelian scheme over $S$ can be lifted to an ordinary abelian scheme over $W(S)$ equipped with a compatible $\delta_{W(S)}$-structure. Therefore, the functor in question is essentially surjective. Moreover, it is faithful by lemma \ref{lemm:hom-ab-scheme-unram}, as $S\to W(S)$ is a nilpotent immersion.

For fullness, let $f\: A\to B$ be a homomorphism of ordinary abelian schemes over $S$. The restriction of $f$
to the associated $p$-divisible groups lifts to a unique $\dt_{W(S)}$-morphism between the associated canonical
lifts (of the $p$-divisible groups). By the Serre--Tate theorem (\ref{serre-tate}), it follows that $f$ itself
lifts to a unique morphism \[\wt{f}\: \wt{A}\to \wt{B}.\] As the restriction of $\wt{f}$ to the $p$-divisible
groups is a $\dt_{W(S)}$-homomorphism it follows that \[\wt{f}\circ \varphi_{\wt{A}}-\varphi_{\wt{A}'}\circ
\varphi^*_{W(S)}(\wt{f})\: \wt{A}\to \varphi^*_{W(S)}(\wt{B})\] is trivial on the $p$-divisible groups and is
therefore trivial itself. Hence $\wt{f}$ commutes with the Frobenius lifts on $\wt{A}$ and $\wt{B}$ is a
$\dt_{W(S)}$-homomorphism, by lemma \ref{lemm:hom-abelian-lambda}.
\end{proof}

\subsection{Iterated canonical lifts}
\label{subsec:iterated}
Since $W(S)$ is also $p$-adic, we may iterate the canonical lift construction.
However it follows from theorem~\ref{thm:can-lift} that there is a unique isomorphism
	\[
	\wt{\wt{A}} \isomto W(W(S))\times_{W(S)} \wt{A}
	\]
of $\dt_{W(W(S))}$-abelian schemes lifting the identity map on $A$, where the implicit morphism $W(W(S))\to W(S)$ in the product is the colimit of the maps $W_n(W_m(S))\to W_{m+n}(S)$ induced by (\ref{eq-truncated-comonad}).

One should view this as expressing the fact that the canonical lift functors equip moduli stack of ordinary abelian schemes over $p$-adic with a $\delta$-structure, in the following sense.

If $\mc{M}$ denotes the moduli stack in question, then we may formally take the jet space $J(\mc{M}):=\mc{M}\circ W$: an $S$-point of $J(X)$ is an ordinary abelian scheme over $W(S)$. The canonical lift functor $A/S\mto \wt{A}/W(S)$ is then a morphism \[\alpha: \mc{M}\to J(\mc{M})\] such that the composition with the projection $u: J(\mc{M})\to \mc{M}$ is canonically isomorphic to the identity of $\mc{M}$. The compatibility of iterated canonical lifts described above, then says that the left square below also commutes up to canonical isomorphism: \[
\xymatrix{
	\mc{M} \ar^-\alpha[r]\ar_-\alpha[d] & J(\mc{M}) \ar^-{J(\alpha)}[d] 
	& & X \ar^-\alpha[r]\ar_-{\mathrm{id}_\mc{M}}[dr] & J(\mc{M}) \ar^\jp[d]\\
	J(\mc{M}) \ar^-{\nu}[r] & J(J(\mc{M})) 
	& & & \mc{M}
}\]

The reader may now compare with diagram (\ref{eqn:delta-structure}) of \ref{subsec:delta-sheaves} defining $\delta$-structures on sheaves. Thus, the structure on $\mc{M}$ above is clearly a stack theoretic incarnation of that. However, the authors warn that $\delta$-structures on stacks certainly require more than the simple commutativity (up to canonical isomorphism) of the diagrams above, and exactly which approach is best in this case will be left to others to consider.

Thus, with the provisos mentioned above, we may view the canonical lift functor $A\mapsto \wt{A}$ as
a $\dt$-structure on the $\Spf(\bZ_p)$-stack of ordinary abelian varieties, even though
it is not algebraic when $g>1$.

\section{Duality and algebraisation}
\label{sec:polar}

\subsection{Duality and polarisations} 
Let $S$ be a $p$-adic sheaf and let $A/S$ be an ordinary abelian scheme. It is easily checked that kernel of the
relative Frobenius 
\[
\varphi_{\wt{A}/W(S)}: \wt{A}\to \varphi_{W(S)}^*(\wt{A})
\]
is $\wt{A}[p]_{\mathrm{mult}}\subset \wt{A}[p]$ so that we can factor the multiplication-by-$p$ map as
\[
\wt{A}\stackrel{\varphi_{\wt{A}/W(S)}}{\longrightarrow}
\varphi_{W(A)}^*(\wt{A})\stackrel{v_{\wt{A}/W(S)}}{\longrightarrow} \wt{A}.
\]
It is immediate from the definition that the homomorphism 
\[
v_{\wt{A}/W(S)}: \varphi^*_{W(S)}(\wt{A})\to \wt{A}
\] 
lifts the relative Verschiebung homomorphism. The dual of the relative Verschiebung is the relative
Frobenius, and hence the dual of $v_{\wt{A}/W(S)}$ 
\[
v_{\wt{A}/W(S)}^\vee: \wt{A}^\vee\to \varphi_{W(S)}^*(\wt{A}^\vee)
\]
defines a lift of the the relative Frobenius on the dual abelian scheme $\wt{A}^\vee$.

\begin{prop} The canonical lift functor $A\mto \wt{A}$ is compatible with duality.
That is, there is a unique isomorphism $\wt{A}^\vee \isomto \wt{A^\vee}$
lifting the identity map on $A^\vee$.
\end{prop}
\begin{proof} 
By the uniqueness of theorem \ref{thm:can-lift}, it is enough to show that for an abelian scheme $A/S$, the dual
$\wt{A}^\vee$ of the canonical lift admits a $\delta_{W(S)}$-structure compatible with its group structure. As
the canonical lift and the dual are compatible with base change we may, as in the proof of theorem
\ref{thm:can-lift}, assume that $S$ is a $p$-torsion free ind-affine sheaf. We now need only show that
$\wt{A}^\vee$ admits a lift of the relative Frobenius which is compatible with its group structure, but by the
remarks above we may take $\varphi_{\wt{A}^\vee/W(S)}:=v_{\wt{A}/W(S)}^\vee.$ 
\end{proof}

\begin{coro}
\label{cor:polaristion}
Let $\lambda\:A\to A^{\vee}$ be a polarisation. Then its canonical lift 
	\[
	\tilde{\lambda}\:\wt{A}\longmap \wt{A^\vee}=(\wt{A})^{\vee}
	\]
is a polarisation, locally of the same degree.
\end{coro}
\begin{proof}
This holds because being a polarisation is a property of the fibres $\lambda_s$ over geometric points $s$
of $S$, and $W(S)$ is a ind-nilpotent thickening of $S$. 
\end{proof}

\begin{rema}
As in section~\ref{subsec:iterated}, we may therefore view the canonical lift functor $A\mapsto \wt{A}$ as
a $\dt$-structure on the algebraic $\Spf(\bZ_p)$-stack of ordinary abelian varieties of dimension $g$ and with 
a polarisation of degree $d^2$. Because the stack is algebraic, this could no doubt be constructed more 
directly, as we did in~\cite{Borger-Gurney:Nagoya}.
\end{rema}

\subsection{Algebraisation}
Often the canonical lift of an abelian variety over a perfect field $k$ is understood to be an abelian
scheme over the affine scheme $\Spec(W(k))$ instead of the formal scheme $\Spf(W(k))$.
The following algebraisation theorem extends this to
canonical lifts of families parametrised by a $p$-adic formal affine schemes $S=\colim_m \Spec(R_m)$ whose 
transition maps are nilpotent immersions. Let us fix this notation for the remainder of this section,
along with $R=\lim_m R_m.$

\begin{theo}\label{theo:algebraisation}
Let $A$ be a polarised ordinary abelian scheme over $S$. Then 
the canonical lift $\wt{A}$, which is a polarised abelian scheme over the ind-scheme $W(S)$, is 
algebraisable to a polarised abelian scheme $\wt{A}^{\mathrm{alg}}$ over the affine scheme $\Spec(W(R))$.
It is unique up to unique isomorphism.
Moreover, $\wt{A}^{\mathrm{alg}}$ admits a unique lift of the Frobenius extending that on $\wt{A}$.
\end{theo}

If we regard the canonical lift $\tilde{A}$ as a map from $W(S)$ to the moduli stack of polarized abelian
varieties, the theorem says that this map prolongs to $\Spec(W(R))$, uniquely up to unique isomorphism. We will
prove it by invoking algebraisation results available in the literature, namely those in Bhatt's paper
\cite{Bhatt:algebraization}.

\begin{proof} 
Let $d^2$ denote the degree of the given polarisation of $A$. For any $N\geq 1$, let $\sA_N$ denote the
moduli stack of abelian varieties of dimension $\dim_S(A)$ with full level-$N$ structure and a polarisation of
degree $d^2$. It follows from geometric invariant theory (theorem 7.9 of~\cite{Mumford:GIT3}, p.\ 139) that
$\sA_N$ is quasi-projective for sufficiently large $N$. Now fix such an $N$ which is further not a multiple of
$p$. Then $\sA_N$ is not only quasi-projective but also finite \'etale and $G$-Galois over the stack
$\sA_1=\sA/G$, where $G$ denotes the group controlling change of level structure. The existence of the
algebraisation $\wt{A}^{\mathrm{alg}}$ now follows from lemma~\ref{lem:alg-point-stack} below.

We now show that the Frobenius lift extends to $\wt{A}^{\mathrm{alg}}$. First, if $f: A\to B$ is a finite locally free homomorphism of ordinary abelian schemes over $S$ and $A^{\alg}$ and $B^\alg$ denote their algebraisations then there is a unique $f^{\alg}: A^\alg\to B^\alg$ whose pull back to $S$ is $f$.
Indeed, we may assume that the degree of $f$ is constant and equal to $N$, as the degree of is locally constant of $S$ (i.e.\ for each $N\geq 1$, the sub-sheaf $S_N\subset S$ where $\deg(f)=N$ is both open and closed). Therefore, if $A[f]\subset A$ denotes the kernel of $f$ then $A[f]\subset A[N^2]$ and so $A[f]$ algebraises uniquely to some $A[f]^{\alg}\subset A^{\alg}[N]$ by the equivalence of categories of finite locally free schemes over $S$ and $\Spec(W(R))$. It follows that $A^{\alg}/A^{\alg}[f]$ is an algebraisation of $B=A/A[f]$ and hence there is a unique isomorphism $A^{\alg}/A^{\alg}[f]\isomto B^{\alg}$ and $f^{\alg}$ is the composition: \[A^{\alg}\to A^{\alg}/A^{\alg}[f]\isomto B^{\alg}.\]

Therefore, returning to the case at hand, we find a unique map
\[\varphi_{\wt{A}^\mathrm{alg}}: \wt{A}^{\mathrm{alg}}\to
\varphi^*(\wt{A}^{\mathrm{alg}}),\]
whose pull-back to $S$ is the Frobenius lift $\varphi_{\wt{A}}$ of $\wt{A}$. We now show that $\varphi_{\wt{A}^\mathrm{alg}}$ is in fact a Frobenius lift.

Let us write $W(S)^\alg=\Spec(W(R))$, $W(S)^\alg_p=W(S)^\alg\times_{\Spec(\Z_p)}\Spec(\F_p)$, $\wt{A}^\alg_{p}=\wt{A}^\alg\times_{W(S)^\alg}W(S)^\alg_p$ and $\varphi_{\wt{A}^\alg_p}=\varphi_{\wt{A}^\alg}\times_{W(S)^\alg}W(S)^\alg_p$.

 The closed immersion $W(S)_p^\alg\to W(S)^\alg$ induces a bijection on closed points, and each closed point of $W(S)^\alg$ factors through $W(S)\to W(S)^\alg$. It follows that $\varphi_{\wt{A}^\alg_p}$ agrees with $\Fr_{\wt{A}^\alg_p}$ at each closed point. Therefore, $\ker(\varphi_{\wt{A}^\alg_p}-\Fr_{\wt{A}^\alg_p})\to \wt{A}^\alg_p$ is a nilpotent immersion. As $\wt{A}^\alg_{p}\to \Spec(W(R)/p)$ is smooth, it follows that $h=\varphi_{\wt{A}^\alg_p}-\Fr_{\wt{A}^\alg_p}$ is equal to the zero morphism after base change along the reduction $(W(S)^\alg_p)_{\mathrm{red}}\to W(S)^\alg_p$. Finally, as $h$ is finitely presented it follows that there is a nilpotent closed immersion $Z\to W(S)^\alg_p$ such that $h\times_{W(S)^\alg_p} Z=0$ and it now follows by rigidity that $h=\varphi_{\overline{A}}-\Fr_{\overline{A}}=0$ and therefore $\varphi_{\wt{A}^\alg}$ is a lift of the Frobenius.
\end{proof}

\begin{lemm}\label{lem:alg-point-stack} Let $\Gamma\rightrightarrows X$ be a finite \'etale groupoid in schemes with $X$ quasi-compact and quasi-separated over $\Spec(\bZ_p)$. If $Y$ denotes the quotient stack $X/\Gamma$, then the canonical functor 
		\begin{equation*}
		Y(W(R)) \longisomap \lim_{m,n} Y(W_n(R_m))	
		\end{equation*}
		is an equivalence.
\end{lemm}
\begin{proof} For any sheaf $S$ the groupoid $Y(S)$ is equivalent to the category of $[\Gamma\rightrightarrows X]$-torsors over $S$, where we recall that the category of $[\Gamma\rightrightarrows X]$-torsors is given by the category of finite \'etale surjective maps $S'\to S$ equipped with a cartesian map of groupoids \[\xymatrix{S'\times_S S'\ar[r]\ar@<-.5ex>[d]\ar@<.5ex>[d] & \Gamma \ar@<-.5ex>[d]\ar@<.5ex>[d]\\
S'\ar[r] & X}\] with the obvious notion of morphism.

This description of $Y(S)$ combined with the facts that 1) the categories of finite \'etale sheaves
over $\Spec(W(R))$ and over $W(S)$ are equivalent and 2) the maps
$$
X(W(R)) \longisomap \lim_{m, n} X(W_n(R_m))\quad\text{and}\quad 
\Gamma(W(R)) \longisomap \lim_{m,n} \Gamma(W_n(R_m))
$$
are bijective (remark 4.3 of \cite{Bhatt:algebraization}) gives the claim.
\end{proof}

\begin{coro}\label{coro:algebraic-delta} 
If $S$ is $p$-torsion free, then there is a unique $\delta_{\Spec(W(R))}$-structure on the algebraisation 
$\wt{A}^{\mathrm{alg}}/\Spec(W(R))$ extending that on $\wt{A}/W(S)$.
\end{coro}
\begin{proof}
Since $S$ is $p$-torsion free, so are $\Spec(W(R))$ and $\wt{A}^{\mathrm{alg}}$.
It then follows from \ref{prop:delta-strucures-frobenius-lift} that $\wt{A}^{\mathrm{alg}}$ has a unique 
$\delta_{\Spec(W(R))}$-structure inducing compatible with its Frobenius lift.
\end{proof}

\begin{theo} 
If $S=\Spec(R)$, where $p$ is nilpotent in $R$, then there is a unique
$\delta_{\Spec(W(R))}$-structure on the algebraisation on $\wt{A}^{\mathrm{alg}}/\Spec(W(R))$ extending that on
$\wt{A}/W(S)$.
\end{theo}
\begin{proof} 
Since the moduli space of ordinary polarised abelian schemes is smooth and locally finitely presented, it
follows that there exists a $p$-torsion free formal affine scheme $S'=\colim_n \Spec(B_n)$, an ordinary
polarised abelian scheme $A'/S'$ and a morphism $S\to S'$ such that $A\cong A'\times_{S'}S$. (See the proof of
\ref{thm:can-lift}.) As the algebraisation is functorial it follows that the algebraisation of
$\wt{A}^{\mathrm{alg}}/\Spec(W(R))$ also admits a unique $\delta_{\Spec(W(R))}$-structure.
\end{proof}

\begin{rema} The theorem stated in the introduction follows from that above. Indeed, $A/R$ is an ordinary abelian scheme and if $A'/W(R)$ is any abelian scheme lifting $A/R$ and equipped with a $\delta_{W(R)}$-structure compatible with its group structure then $A'\times_{\Spec(W(R))}W(S)$ must be isomorphic to $\wt{A}/W(S)$ by \ref{theo:can-lift-p-adic} and so it follows that $A'$ must be isomorphic to $\wt{A}^\alg$ by \ref{theo:algebraisation}. Moreover, these isomorphisms are compatible with the Frobenius lifts and hence with the $\delta$-structures by \ref{lemm:hom-abelian-lambda}.
\end{rema}

\frenchspacing
\bibliography{references}
\bibliographystyle{plain}

\end{document}